\numberwithin{equation}{subsection}
\theoremstyle{plain}
\newtheorem{thm}[equation]{Theorem}
\newtheorem{prop}[equation]{Proposition}
\newtheorem{lem}[equation]{Lemma} 
\newtheorem{cor}[equation]{Corollary}
\newtheorem*{cor*}{Corollary}
\newtheorem*{prob*}{Problem}
\newtheorem*{thm*}{Theorem}
\newtheorem*{thma*}{Theorem A}
\newtheorem*{thmb*}{Theorem B}
\theoremstyle{remark}
\newtheorem{exm}[equation]{Example}
\newtheorem{defn}[equation]{Definition}
\newtheorem{rmk}[equation]{Remark}
\newenvironment{enumalph}
{\begin{enumerate}}
{\end{enumerate}}
\DeclareMathOperator{\Aut}{Aut}
\DeclareMathOperator{\Frob}{Frob}
\DeclareMathOperator{\Gal}{Gal}
\DeclareMathOperator{\lcm}{lcm}
\DeclareMathOperator{\GL}{GL}
\DeclareMathOperator{\ord}{ord}
\DeclareMathOperator{\SL}{SL}
\DeclareMathOperator{\supp}{supp}
\DeclareMathOperator{\Tr}{Tr}
\DeclareMathOperator{\Res}{Res}
\DeclareMathOperator{\vol}{vol}
\newcommand{\dt}{d^T}
\newcommand{\Fsf}{\mathsf F}
\newcommand{\Lsf}{\mathsf L}
\newcommand{\Csf}{\mathsf C}
\newcommand{\defi}[1]{\emph{\textsf{#1}}} 				
\newcommand{\A}{\mathbb A}
\newcommand{\C}{\mathbb C}
\newcommand{\F}{\mathbb F}
\newcommand{\G}{\mathbb G}
\newcommand{\PP}{\mathbb P}
\newcommand{\Q}{\mathbb Q}
\newcommand{\R}{\mathbb R}
\newcommand{\Z}{\mathbb Z}
\newcommand{\calF}{\mathcal{F}}
\newcommand{\dD}{\mathrm{d}}
\newcommand{\psmod}[1]{~(\textup{\text{mod}}~{#1})}
\begin{document}

\title[Zeta functions]{Zeta functions of alternate mirror Calabi--Yau families}

\author{Charles F. Doran}
\address{University of Alberta, Department of Mathematics, Edmonton, AB Canada}
\email{doran@math.ualberta.edu}

\author{Tyler L. Kelly}
\address{Department of Pure Mathematics and Mathematical Statistics, University of Cambridge, Wilberforce Road, Cambridge CB3 0WB, UK}
\email{tlk20@dpmms.cam.ac.uk}

\author{Adriana Salerno}
\address{Department of Mathematics, Bates College, 3 Andrews Rd., Lewiston, ME 04240, USA}
\email{asalerno@bates.edu}

\author{Steven Sperber}
\address{School of Mathematics, University of Minnesota, 206 Church Street SE, Minneapolis, MN 55455 USA}
\email{sperber@umn.edu}

\author{John Voight}
\address{Department of Mathematics, Dartmouth College, 6188 Kemeny Hall, Hanover, NH 03755, USA}
\email{jvoight@gmail.com}

\author{Ursula Whitcher}
\address{Mathematical Reviews, 416 Fourth St, Ann Arbor, MI 48103, USA}
\email{uaw@umich.edu}

\date{\today}
\setcounter{tocdepth}{1}

\begin{abstract}
We prove that if two Calabi--Yau invertible pencils have the same dual weights, then they share a common factor in their zeta functions. By using Dwork cohomology, we demonstrate that this common factor is related to a hypergeometric Picard--Fuchs differential equation. The factor in the zeta function is defined over the rationals and has degree at least the order of the Picard--Fuchs equation. As an application, we relate several pencils of K3 surfaces to the Dwork pencil, obtaining new cases of arithmetic mirror symmetry.
\end{abstract}

\maketitle

\section{Introduction}

\subsection{Motivation}

For a variety $X$ over a finite field $\F_q$, the zeta function of $X$ is the exponential generating function for the number of $\F_{q^r}$-rational points, given by
\[ Z(X,T) \colonequals \exp\left(\sum_{r=1}^{\infty}\frac{\#X(\F_{q^r})T^r}{r}\right) \in \Q(T). \]

\noindent In his study of the Weil conjectures, Dwork analyzed the way the zeta function varies for one-parameter deformations of Fermat hypersurfaces in projective space, like the pencil
\begin{equation} \label{eqn:dwork}
x_0^{n+1}+\dots+ x_n^{n+1} - (n+1)\psi x_0x_1\cdots x_n = 0
\end{equation}
in the parameter $\psi$.  In his 1962 ICM address \cite{Dwo62}, Dwork constructed a family of endomorphisms whose characteristic polynomials determined the zeta functions of the hypersurfaces modulo $p$. Furthermore, he identified a power series in the deformation parameter with rational function coefficients that satisfies an ordinary differential equation with regular singular points.  In fact, this differential equation is the Picard--Fuchs equation for the  holomorphic differential form \cite{Kat68}.  The pencil \eqref{eqn:dwork} is a central example in both arithmetic and algebraic geometry \cite{katz:dwork}; we label this family $\Fsf_{n+1}$.

On the arithmetic side, Dwork \cite{padic} analyzed $\Fsf_4$ in detail to explore the relationship between the Picard--Fuchs differential equation satisfied by the holomorphic form on the family and the characteristic polynomial of Frobenius acting on middle-dimensional cohomology.  Dwork identifies the reciprocal zeros of the zeta function for this family of K3 surfaces explicitly by studying $p$-adic solutions of the Picard--Fuchs equation.  This analysis motivated Dwork's general study of $p$-adic periods.

On the algebraic side, the family of Calabi--Yau threefolds $\Fsf_5$ has been used to explore the deep geometric relationship known as \emph{mirror symmetry}.  Mirror symmetry is a duality from string theory that has shaped research in geometry and physics for the last quarter-century. Loosely defined, it predicts a duality where, given a Calabi--Yau variety $X$ there exists another Calabi--Yau variety $Y$, the mirror, so that various geometric and physical data is exchanged. For example, Candelas--de la Ossa--Green--Parkes \cite{CDGP} showed that the number of rational curves on quintic threefolds in projective space can be computed by studying the mirror family, realized via the Greene--Plesser mirror construction \cite{GP} as a resolution of a finite quotient of $\Fsf_5$.

Combining both sides, Candelas, de la Ossa, and Rodriguez-Villegas used the Greene--Plesser mirror construction and techniques from toric varieties to compare the zeta function of fibers $X_\psi$ of $\Fsf_5$ and the mirror pencil of threefolds $Y_\psi$ \cite{CORV, CORV2, candelas}.  They found that for general $\psi$, the zeta functions of $X_\psi$ and $Y_\psi$ share a common factor  related to the period of the holomorphic form on $X_\psi$.  In turn, they related the other nontrivial factors of $Z(X_\psi, T)$ to the action of discrete scaling symmetries of the Dwork pencil $\Fsf_5$ on homogeneous monomials.  In related work (but in a somewhat different direction), Jeng-Daw Yu \cite{yu} showed that the unique unit root for the middle-dimensional factor of the zeta function for the Dwork family in dimension $n$ can be expressed in terms of a ratio of holomorphic solutions of a hypergeometric Picard--Fuchs equation (evaluated at certain values).

The Dwork pencil $\Fsf_5$ is not the only highly symmetric pencil that may be used to construct the mirror to quintic threefolds.  In fact, there are six different pencils of projective Calabi--Yau threefolds, each admitting a different group action, that yield such a mirror: these pencils were studied by Doran--Greene--Judes \cite{DGJ08} at the level of Picard--Fuchs equations. Bini--van Geemen--Kelly \cite{BvGK} then studied the Picard--Fuchs equations for alternate pencils in all dimensions.    

A general mechanism for finding alternate mirrors is given by the framework of \emph{Berglund--H\"{u}bsch--Krawitz (BHK) duality}.  This framework identifies the mirrors of individual Calabi--Yau varieties given by \emph{invertible polynomials}, or more generally of \emph{invertible pencils}, the one-parameter monomial deformation of invertible polynomials; these notions are made precise in the next section.  Aldi--Peruni\v{c}i\'c \cite{AP15} have studied the arithmetic nature of invertible polynomials via D-modules. 

In this paper, we show that invertible pencils whose mirrors have common properties share arithmetic similarities as well. Revisiting work of G\"ahrs \cite{GahrsThesis}, we find that invertible pencils whose BHK mirrors are hypersurfaces in quotients of the same weighted projective space have the same Picard--Fuchs equation associated to their holomorphic form. In turn, we show that the Picard--Fuchs equations for the pencil dictate a factor of the zeta functions of the pencil.  We then show that the factor of the zeta function is bounded by the degree of the Picard--Fuchs equation and the dimension of the piece of the middle cohomology that is invariant under the action of a finite group of symmetries fixing the holomorphic form.

\subsection{Main theorem} 

An \defi{invertible polynomial} is a polynomial of the form
$$
F_{A} = \sum_{i=0}^n \prod_{j=0}^n x_j^{a_{ij}}  \in \Z[x_0,\dots,x_n],
$$
where the matrix of exponents $A = (a_{ij})_{i,j}$ is an $(n+1) \times (n+1)$ matrix with nonnegative integer entries, such that:
\begin{itemize}
\item $\det(A) \neq 0$,
\item there exist $r_0,\dots,r_n \in \Z_{>0}$ and $d \in \Z$ such that $\sum_{j=0}^n r_j a_{ij}=d$ (i.e., the polynomial $F_{A}$ is quasi-homogeneous), and 
\item the function $F_{A}: \mathbb{C}^{n+1} \rightarrow \mathbb{C}$ has exactly one singular point at the origin.
\end{itemize}
We will be particularly interested in the case where $F_{A}$ is invertible and homogeneous of degree $d=n+1$: then the hypersurface defined by $F_A=0$ defines a Calabi--Yau variety in $\PP^n$.  

These conditions are restrictive. In fact, Kreuzer--Skarke \cite{KS} proved  that any invertible polynomial $F_A(x)$ can be written as a sum of polynomials, each of which belongs to one of three \defi{atomic types}, known as \defi{Fermat}, \defi{loop}, and \defi{chain}:
\begin{equation*}
\begin{aligned}
\text{Fermats } &: \quad x^{a}, \\
\text{loops } &: \quad x_1^{a_1}x_2 + x_2^{a_2}x_3 + \ldots +x_{m-1}^{a_{m-1}}x_m + x_m^{a_m}x_1, \text{ and } \\
\text{chains } &: \quad x_1^{a_1}x_2 + x_2^{a_2}x_3 + \ldots +x_{m-1}^{a_{m-1}}x_m + x_m^{a_m}.
\end{aligned}
\end{equation*}
Invertible polynomials appeared as the first families exemplifying mirror symmetry \cite{GP, BH93}. Their arithmetic study, often in the special case of Delsarte polynomials, is of continuing interest \cite{Shi86, EGZ}. 

Let $F_A$ be an invertible polynomial.  Inspired by Berglund--H\"ubsch--Krawitz (BHK) mirror symmetry \cite{BH93, Kra09}, we look at the polynomial obtained from the transposed matrix $A^T$:
$$
F_{A^T} \colonequals   \sum_{i=0}^n \prod_{j=0}^n x_j^{a_{ji}}.
$$
Then $F_{A^T}$ is again an invertible polynomial, quasihomogeneous with (possibly different) weights $q_0, \ldots, q_n$ for which we may assume $\operatorname{gcd}(q_0, \ldots, q_n) =1$, so that $F_{A^T}=0$ defines a hypersurface $X_{A^T}$ in the weighted-projective space $W\mathbb{P}^n(q_0, \ldots, q_n)$.  We call $q_0, \ldots, q_n$ the \defi{dual weights} of $F_A$. Let $\dt \colonequals \sum_i q_i$ be the sum of the dual weights.

We define a one-parameter deformation of our invertible polynomial by
\begin{equation} \label{eqn:invertpencil}
F_{A,\psi} \colonequals \sum_{i=0}^n \prod_{j=0}^n x_j^{a_{ij}} - \dt\psi x_0 \cdots x_n \in \Z[\psi][x_0,\dots,x_n].
\end{equation}
Then $X_{A, \psi}: F_{A,\psi}=0$ is a family of hypersurfaces in $\mathbb{P}^n$ in the parameter $\psi$, which we call an \defi{invertible pencil}.  

The Picard--Fuchs equation for the family $X_{A, \psi}$ is determined completely by the $(n+1)$-tuple of dual weights $(q_0,\dots,q_n)$ by work of G\"ahrs \cite[Theorem 3.6]{GahrsThesis}.  In particular, there is an explicit formula for the order $D(q_0,\dots,q_n)$ of this Picard--Fuchs equation that depends only on the dual weights: see Theorem~\ref{thm: Gahrs} for details.  We further observe that the Picard--Fuchs equation is a hypergeometric differential equation. 

For a smooth projective hypersurface $X$ in $\mathbb{P}^n$, we have
\begin{equation} \label{eqn:PXT}
Z(X, T) = \frac{P_X(T)^{(-1)^n}}{(1-T)(1-qT)\cdots (1-q^{n-1}T)},
\end{equation}
with $P_X(T) \in \Q[T]$.  Our main result is as follows (for the notion of nondegenerate, see section 2).

\begin{thm} \label{thm:sameweights}
Let $X_{A,\psi}$ and $X_{B,\psi}$ be invertible pencils of Calabi--Yau $(n-1)$-folds in $\PP^n$.  Suppose $A$ and $B$ have the same dual weights $(q_i)_i$.  Then for each $\psi \in \F_q$ such that $\gcd(q,(n+1)\dt)=1$ and the fibers $X_{A,\psi}$ and $X_{B,\psi}$ are nondegenerate and smooth, 
the polynomials $P_{X_{A, \psi}}(T)$ and $P_{X_{B,\psi}}(T)$ have a common factor $R_\psi(T) \in \Q[T]$ with 
\[ \deg R_{\psi}(T) \geq D(q_0,\dots,q_n).\]
\end{thm}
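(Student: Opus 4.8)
The plan is to exhibit $R_\psi(T)$ as the characteristic polynomial of Frobenius on a common sub-object of the middle cohomology of $X_{A,\psi}$ and $X_{B,\psi}$, and then to bound its degree using the Picard--Fuchs equation. First I would pass to the affine picture and use the interpretation of the zeta function via exponential sums: the key observation, going back to Dwork and made precise by G\"ahrs in the complex setting, is that the piece of cohomology generated by the holomorphic form and its derivatives under the Picard--Fuchs operator depends only on the dual weights $(q_i)_i$. Concretely, both pencils admit the deformation term $\dt\psi\, x_0\cdots x_n$, and the monomial $x_0\cdots x_n$ together with the quasi-homogeneous structure determined by the dual weights controls the relevant Dwork-cohomology classes. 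So I would set up Dwork's $p$-adic cohomology (the $p$-adic analogue of the de Rham complex attached to the exponential of $F_{A,\psi}$ twisted by a Teichm\"uller character), isolate the subspace $H_{A,\psi}$ spanned by the holomorphic form and the image of the Picard--Fuchs/Gauss--Manin connection applied to it, and identify this subspace for $A$ and for $B$ via the common dual weights.

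Next I would show this subspace is Frobenius-stable and defined over $\Q$. Frobenius stability is automatic once one knows the subspace is the one cut out by the hypergeometric Picard--Fuchs operator, since Frobenius commutes with the Gauss--Manin connection; more carefully, I would use that the relevant sub-$F$-crystal is the one whose horizontal sections are solutions of the hypergeometric equation $D(q_0,\dots,q_n)$, and such a crystal is canonically attached to the pencil. Rationality of the resulting factor $R_\psi(T)$ follows because $R_\psi(T)$ is (up to the trivial cyclotomic factors in \eqref{eqn:PXT}) the reciprocal characteristic polynomial of $q$-power Frobenius on a piece of $\ell$-adic cohomology stable under $\Gal(\overline\F_q/\F_q)$ and defined independently of $A$ versus $B$; alternatively one sees it lands in $\Q[T]$ because it divides both $P_{X_{A,\psi}}(T)$ and $P_{X_{B,\psi}}(T)$ and can be characterized by a $\Q$-rational condition on the Dwork cohomology. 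The degree bound $\deg R_\psi(T)\ge D(q_0,\dots,q_n)$ then comes from the fact that the Picard--Fuchs equation has order exactly $D(q_0,\dots,q_n)$ by Theorem~\ref{thm: Gahrs}, so the sub-crystal it generates has rank $D(q_0,\dots,q_n)$, whence the corresponding Frobenius block has that size. Here the nondegeneracy and smoothness hypotheses, together with $\gcd(q,(n+1)\dt)=1$, guarantee that the Dwork-cohomology computation is valid at the fiber $\psi$ (no extra vanishing or collapse of the relevant classes, and the Teichm\"uller/character bookkeeping goes through).

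I expect the main obstacle to be the precise comparison step: showing that the Picard--Fuchs-generated subspace of Dwork cohomology really is "the same" for $A$ and $B$ as a Frobenius module, not merely as a vector space carrying the same differential equation. Two $F$-crystals can have isomorphic underlying connections (the same hypergeometric equation) and yet, a priori, differ by a twist; one must check that the twist is trivial here, which should follow from matching the explicit Frobenius structure on the distinguished generator (the holomorphic form, normalized the same way via $x_0\cdots x_n$ in both pencils) — this is where the common deformation monomial and common $\dt$ are essential. A secondary technical point is controlling the behavior at the specific fiber $\psi\in\F_q$ rather than generically: one needs the chosen $\psi$ to avoid the (finitely many) bad primes/parameters, which is exactly the content of the nondegeneracy, smoothness, and coprimality hypotheses, and one invokes the standard specialization results for Dwork cohomology to descend from the generic statement to the fiber.
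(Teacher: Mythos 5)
Your outline follows the same overall strategy as the paper: set up Dwork/Adolphson--Sperber $p$-adic cohomology with $\psi$ as a parameter, isolate the cyclic $\nabla$-submodule $W_\psi$ generated by the holomorphic form $wx_0\cdots x_n$, note that both pencils give the \emph{same} hypergeometric Picard--Fuchs equation because the dual weights agree (G\"ahrs), and then compare Frobenius structures. You also correctly flag the central difficulty: the two pencils a priori give isomorphic connections on $W_\psi$ but possibly different Frobenius twists, and one must show the twist is trivial. The paper resolves this exactly as you suggest in spirit, via Dwork's uniqueness lemma for Frobenius structures on an \emph{irreducible} differential equation (so the two Frobenii differ by a constant $c$), and then pins down $c=1$ by matching the $p$-adic analytic unit-root formulas (Yu, Adolphson--Sperber) at ordinary fibers, bootstrapping over extensions $\F_{q^r}$.

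However, there are two points where your argument is genuinely incomplete. First, you assert that Frobenius stability of $W_\psi$ is ``automatic'' because Frobenius commutes with the Gauss--Manin connection. That commutation shows Frobenius preserves the class of $\nabla$-stable subspaces, but it does \emph{not} by itself show $\Frob(W_\psi)\subseteq W_\psi$: one still has to know that $\Frob$ of the distinguished generator lands back in $W_\psi$. The paper proves this nontrivially in Lemma~\ref{lem:ExistenceFrobenius}: working in the dual module $W_\psi^*$, it shows via the unit-root rank estimate (Lemma~\ref{lem:qinFrob}) that the unique unit-root eigenvector $\eta_0^*$ must lie in $W_\psi^*$, and then irreducibility of the Picard--Fuchs operator (Propositions~\ref{prop:PFirred} and~\ref{irredDiffModule}) forces the cyclic span of $\eta_0^*$ under $\nabla^*$ to be all of $W_\psi^*$, whence $\Frob$ stabilizes it. Without an argument of this kind, the Frobenius block you want to extract is not well-defined.

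Second, your route to rationality over $\Q$ is hand-waved. The characteristic polynomial of Frobenius on $W_\psi$ is naturally defined only over a number field, and the fact that it divides two polynomials in $\Q[T]$ does not by itself produce a $\Q$-rational common factor of the claimed size. The paper's mechanism uses an ingredient you omit entirely: the symplectic symmetry group $\SL(F_A)$. It shows $W_\psi$ sits inside the $\SL(F_A)$-invariant subspace $\Sigma_{\diamond,\psi}$, then invokes the Harder--Narasimhan comparison (Lemma~\ref{lem:XFqproj}) to identify $\det(1-\Frob T\mid \Sigma_{\diamond,\psi})$ with the $\ell$-adic characteristic polynomial on $H^{n-1}_{\textup{\'et}}(\overline{X},\Q_\ell)^{\SL(F_A)}$, hence a polynomial in $\Q[T]$; finally it replaces $R_{\psi_0}(T)$ by the least common multiple of its $\Gal(K/\Q)$-conjugates, which is $\Q$-rational, still independent of $\diamond$, still divides $S_{\diamond,\psi_0}$, and has degree $\geq D(q_0,\dots,q_n)$. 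This is also why the theorem asserts only a lower bound on $\deg R_\psi$ rather than equality: the Galois lcm step may enlarge the degree. Your sketch would need both the group-invariance/Galois-lcm step and the nonautomatic Frobenius-stability argument to close to a complete proof.
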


We show that the common factor $R_\psi(T)$ is attached to the holomorphic form on $X_{A,\psi}$ and $X_{B,\psi}$, explaining the link to the Picard--Fuchs differential equation: it is given explicitly in terms of a hypergeometric series \eqref{equ:genhypergeometric}.  For this reason, if we had an appropriate theorem for rigidity of hypergeometric motives, we could further conclude that there exists a factor of degree precisely $D(q_0,\dots,q_n)$ in $\Q[T]$: see Remark \ref{rmk:hypergeom}.  For invertible pencils with dual weights $(1,\dots,1)$, including those comprised of only Fermats and loops, we can nail this down precisely (Corollary \ref{cor:dworksolved}).  

\begin{cor}
With hypotheses as in Theorem \textup{\ref{thm:sameweights}}, suppose that the common dual weights are $(q_0,\ldots,q_n)=(1,\ldots,1)$.  Then the common factor $R_\psi(T) \in \Q[T]$ has $\deg R_{\psi}=n$.
\end{cor}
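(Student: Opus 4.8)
The plan is to pin $\deg R_\psi$ between $n$ and $n$. The lower bound is immediate: Theorem~\ref{thm:sameweights} gives $\deg R_\psi \geq D(1,\dots,1)$, and by Theorem~\ref{thm: Gahrs} this equals $n$, since with $\dt = \sum_i q_i = n+1$ and every $q_i = 1$ the formula for $D$ reduces to counting the $n$ nonzero residues modulo $n+1$. (Equivalently, this is the order of the classical Dwork hypergeometric equation, with parameters $\tfrac1{n+1}, \dots, \tfrac{n}{n+1}$.)

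For the matching upper bound I would compare $X_{A,\psi}$ with the Dwork pencil $\Fsf_{n+1}$. Since the Fermat matrix $A_0 = (n+1)I_{n+1}$ is symmetric, $\Fsf_{n+1} = X_{A_0,\psi}$ also has dual weights $(1,\dots,1)$, and for it the hypothesis $\gcd(q,(n+1)\dt)=1$ reads simply $\gcd(q,n+1)=1$. Applying Theorem~\ref{thm:sameweights} to $X_{A,\psi}$ and $X_{A_0,\psi}$ and invoking the construction in its proof, the canonical hypergeometric factor $R_\psi$ divides the characteristic polynomial of Frobenius on the subspace $W_\psi \subseteq H^{n-1}_{\mathrm{prim}}(X_{A_0,\psi})$ invariant under the group of diagonal symmetries that fix the holomorphic form; it therefore suffices to show $\dim W_\psi = n$. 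This is a Griffiths--Dwork computation for the smooth degree-$(n+1)$ fiber $X_{A_0,\psi} \subset \PP^n$: here $H^{n-1}_{\mathrm{prim}}(X_{A_0,\psi})$ is identified with the degree-$k(n+1)$ graded pieces, for $0 \le k \le n-1$, of the Jacobian ring of $F_{A_0,\psi}$, the torus $\{(\zeta_i)_i : \zeta_i^{n+1}=1,\ \prod_i \zeta_i = 1\}$ acts on the class of $\prod_i x_i^{b_i}$ by $\prod_i \zeta_i^{b_i}$, and invariance forces $b_0 \equiv \dots \equiv b_n \pmod{n+1}$. As $0 \le b_i \le n-1$, the only such monomial in degree $k(n+1)$ is $(x_0 x_1 \cdots x_n)^k$, leaving exactly one invariant class in each of the $n$ relevant degrees; hence $\dim W_\psi = n$ and $\deg R_\psi \leq n$. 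Combining the two bounds gives $\deg R_\psi = n$, with $R_\psi \in \Q[T]$ by Theorem~\ref{thm:sameweights}.

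The step I expect to be the main obstacle is the upper bound, specifically justifying that the factor $R_\psi$ of Theorem~\ref{thm:sameweights} genuinely lies inside the $n$-dimensional invariant subspace $W_\psi$ of the Dwork pencil's cohomology, rather than inside some larger Frobenius-stable subspace --- equivalently, that the Dwork-cohomology module carrying $R_\psi$ coincides with the group-invariant part of $H^{n-1}_{\mathrm{prim}}(\Fsf_{n+1})$. This is exactly the point at which the hypothesis that the dual weights are $(1,\dots,1)$ (so $\dt = n+1$) is used; granting this identification, the monomial count above completes the proof.
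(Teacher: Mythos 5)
Your proposal is correct and follows essentially the same path as the paper's own proof. The paper establishes the lower bound $\deg R_\psi \geq D(1,\dots,1) = n$ from Theorem~\ref{thm:commonkmf}, and for the upper bound invokes Example~\ref{exm:Fermat}, which performs precisely the Milnor-ring monomial count you carry out inline (the only $\SL(F_{A})$-invariant monomials in the Jacobian ring of the Fermat are $(x_0\cdots x_n)^k$, $0 \le k \le n-1$), giving $\dim_\C H^{n-1}_{\textup{prim}}(\Fsf_{n+1})^{\SL} = n$ and hence the matching upper bound. The concern you flag at the end---whether $R_\psi$ genuinely sits inside the $\SL$-invariant subspace $\Sigma_{\diamond,\psi}$ rather than some larger Frobenius-stable piece---is exactly what the proof of Theorem~\ref{thm:commonkmf} supplies: it constructs the cyclic $\nabla$-stable module $W_\psi$ of dimension $D$, shows $W_\psi \subseteq \Sigma_{\diamond,\psi}$ because $\SL(F_A)$ fixes the holomorphic form, and establishes the divisibilities $R'_{\psi_0} \mid S_{\diamond,\psi_0} \mid P_{\diamond,\psi_0}$ with $S_{\diamond,\psi_0} \in \Q[T]$ of degree $\dim \Sigma_{\diamond,\psi}$. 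So the upper bound is built into the theorem and there is no gap to fill beyond applying it with the Dwork pencil (whose matrix is symmetric, hence with dual weights $(1,\dots,1)$) as one of the two families.
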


Our proof of Theorem \ref{thm:sameweights} uses the $p$-adic cohomology theory of Dwork, as developed by Adolphson--Sperber \cite{AS,AS08}, relating the zeta function of a member of the family to the $L$-function of an exponential sum.  Our main theorem then follows from a result of Dwork \cite{uniqueness} on the uniqueness of the Frobenius structure on the differential equation and the fact that the Picard--Fuchs equations for the holomorphic forms of $X_{A, \psi}$ and $X_{B, \psi}$ coincide. 

Theorem~\ref{thm:sameweights} overlaps work of Miyatani \cite[Theorem 3.7]{Miyatani}.  In our notation, his theorem states that if $X_{A,\psi}$ is an invertible pencil, $q$ satisfies certain divisibility conditions depending on $A$, and $\psi\in \F_q^\times$ is such that $X_{A,\psi}$ is smooth and $\psi^{d^T} \neq 1$, then $P_{X_{A, \psi}}(T)$ has a factor in $ \overline{\Q}[T]$ that depends only on $q$ and the dual weights $(q_i)_i$.  In particular, if $A$ and $B$ have the same dual weights, the zeta functions of $X_{A,\psi}$ and $X_{B,\psi}$ (for $\psi$ satisfying these conditions) will have a common factor in $\overline{\Q}[T]$.  His factor \cite[(2.4), Remark 3.8(i)]{Miyatani} divides the common factor appearing in Theorem~\ref{thm:sameweights}.  He uses finite-field versions of Gauss sums together with a combinatorial argument.

To compare these two theorems, we observe that Theorem~\ref{thm:sameweights} provides slightly more information about the common factor and places fewer restrictions on $q$: for arithmetic applications, it is essential for the result that it hold without congruence conditions on $q$.  Our techniques are different, and are ruled by the powerful governing principle that factors of the zeta function are organized by Picard--Fuchs differential equations.  For example, our method could extend to pencils for which the associated differential equation may not be hypergeometric.

\subsection{Implications} \ Theorem~\ref{thm:sameweights} relates the zeta functions of many interesting Calabi--Yau varieties: for example, the dual weights are the same for any degree $n+1$ invertible pencil composed of Fermats and loops.  For specificity, we compare the zeta functions of the Dwork pencil $\Fsf_n$ and the generalized Klein--Mukai family $\Fsf_1 \Lsf_n$, defined by the pencil
\begin{equation} \label{eqn:KM}
\Fsf_1 \Lsf_n: x_0^nx_1+\dots+x_{n-1}^nx_0 + x_n^{n+1} - (n+1)\psi x_0x_1\cdots x_n = 0.
\end{equation}
The pencil takes its name from Klein's quartic curve, whose group of orientation-preserving automorphisms is isomorphic to the simple group of order 168, and the member of the family $\Fsf_1\Lsf_{3}$ at $\psi=0$, which appears as an extremal example during Mukai's classification of finite groups of automorphisms of K3 surfaces that preserve a holomorphic form (cf. \cite{eightfold, mukai, OZ}). In this setting, we give a concrete proof of Theorem~\ref{thm:sameweights}.  

We also consider a collection of five invertible pencils $\diamond$ of K3 surfaces in $\PP^4$, including $\Fsf_4$ and $\Fsf_1 \Lsf_3$.  The other three pencils, $\Fsf_2 \Lsf_2$, $\Lsf_2 \Lsf_2$, and $\Lsf_4$, also have only Fermats and loops as atomic types; all five are described by matrices with the same dual weights (see Table~\ref{table:5families} for defining polynomials).  Let $\mathrm{H}$ be the Greene--Plesser mirror family of quartics in $\mathbb{P}^3$, which is obtained by taking the fiberwise quotient of $\Fsf_4$ by $(\mathbb{Z}/4\mathbb{Z})^2$ and resolving singularities.
A computation described by Kadir \cite[Chapter 6]{kadir} shows that for odd primes and $\psi \in \F_q$ such that $\psi^4 \neq 1$ (that is, such that $\mathrm{H}_\psi$ is smooth),
\begin{equation} 
Z(\mathrm{H}_\psi,T)=\frac{1}{(1-T)(1-qT)^{19}(1-q^2T)R_{\psi}(T)}. 
\end{equation}
This calculation combined with Theorem~\ref{thm:sameweights} and properties of K3 surfaces yields the following corollary, exemplifying arithmetic mirror symmetry in these cases.

\begin{cor} \label{cor:yupyup}
Let $\diamond \in \{\Fsf_4,\Fsf_1 \Lsf_3,\Fsf_2 \Lsf_2,\Lsf_2 \Lsf_2,\Lsf_4\}$.  Then there exists $r_0 \geq 1$ such that for all $q=p^r$ with $r_0 \mid r$ and $p \neq 2,5,7$ and all $\psi \in \F_q$ with $\psi^4 \neq 1$, we have 
\[ Z(X_{\diamond,\psi}/\F_{q^r},T)=Z(\mathrm{H}_{\psi}/\F_{q^r},T). \]
\end{cor}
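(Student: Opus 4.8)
The plan is to combine Theorem~\ref{thm:sameweights} with Kadir's explicit computation of $Z(\mathrm H_\psi,T)$, the one new ingredient being an analysis of the ``algebraic'' part of the second cohomology of these K3 surfaces. Throughout, $n=3$, so each $X_{\diamond,\psi}$ is a quartic surface in $\PP^3$; since every $F_{A_\diamond}$ is a sum of Fermats and loops of degree $4$, all five matrices have dual weights $(q_0,\dots,q_3)=(1,1,1,1)$ and $\dt=4$, and by Theorem~\ref{thm: Gahrs} the common order of the associated Picard--Fuchs equation is $D(1,1,1,1)=3$. First I would fix $p\notin\{2,5,7\}$ and $\psi\in\F_q$ with $\psi^4\neq1$; then $\gcd(q,(n+1)\dt)=\gcd(q,16)=1$, the fibers $X_{\diamond,\psi}$ and $X_{\Fsf_4,\psi}$ are smooth (this is exactly $\psi^{\dt}=\psi^4\neq1$, using $p\nmid4$) and nondegenerate (it suffices that $p$ not divide $\det A_\diamond\cdot\det A_{\Fsf_4}$, and the prime divisors of these determinants for the five pencils are $2$, e.g.\ $\det A_{\Fsf_4}=2^8$; $5$, from $\det A_{\Lsf_4}=\prod_{\zeta^4=1}(3+\zeta)=80$; and $7$, from $\det A_{\Fsf_1\Lsf_3}=112$). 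Applying Theorem~\ref{thm:sameweights} to the pair $(A_\diamond,A_{\Fsf_4})$ gives a common factor $R_\psi(T)\in\Q[T]$ with $R_\psi\mid P_{X_{\diamond,\psi}}$ and, by Corollary~\ref{cor:dworksolved}, $\deg R_\psi=3$; it is the hypergeometric factor \eqref{equ:genhypergeometric} attached to the holomorphic form. Since $\mathrm H_\psi$ is a resolution of $\Fsf_{4,\psi}/(\Z/4\Z)^2$ whose holomorphic form descends from that of $\Fsf_{4,\psi}$ and hence has the same Picard--Fuchs equation, this $R_\psi$ is exactly the factor appearing in Kadir's identity $Z(\mathrm H_\psi,T)=\frac{1}{(1-T)(1-qT)^{19}(1-q^2T)R_\psi(T)}$.

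Next I would pin down the complementary factor. Write $P_{X_{\diamond,\psi}}(T)=R_\psi(T)Q_\psi(T)$; since $X_{\diamond,\psi}$ is K3, \eqref{eqn:PXT} forces $\deg P_{X_{\diamond,\psi}}=21$, hence $\deg Q_\psi=18$, and by the Weil bounds the reciprocal roots of $Q_\psi$ have absolute value $q$. The goal is to show each such root equals $q$ times a root of unity of order dividing a fixed $r_0$. Over $\C$, the cup-product orthogonal complement of the rank-$3$ Picard--Fuchs sub-local-system of $R^2\pi_*\Q$ is a sub-local-system $\mathbb V'$ of rank $22-3=19$ whose Hodge structure is of pure type $(1,1)$; consequently $\mathbb V'$ is generated by divisor classes, the N\'eron--Severi lattice is constant of rank $19$ along the pencil, and the monodromy acts on $\mathbb V'$ through a finite group. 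All of this is defined over $\Z[1/N]$ with the prime factors of $N$ among $\{2,5,7\}$, so for our $p$ and $\psi$ there is a Frobenius-stable decomposition $H^2\bigl((X_{\diamond,\psi})_{\overline{\F_q}},\Q_\ell\bigr)=\mathbb V'_\psi\oplus\mathbb V^{\mathrm{PF}}_\psi$ with $\dim\mathbb V'_\psi=19$, on which $\Frob_q$ has all eigenvalues of the form $q\zeta$ with $\zeta$ a root of unity of bounded order, while its characteristic polynomial on $\mathbb V^{\mathrm{PF}}_\psi$ is $R_\psi$. The hyperplane class accounts for the explicit factor $(1-qT)$ in \eqref{eqn:PXT}, so $Q_\psi$ is precisely the characteristic polynomial of $\Frob_q$ on the remaining $18$-dimensional part of $\mathbb V'_\psi$, and its reciprocal roots are the asserted $q\zeta_i$.

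Finally, let $r_0$ be the least common multiple of the (uniformly bounded) orders of these $\zeta_i$. For $r_0\mid r$, passing to $\F_{q^r}$ replaces each reciprocal root $q\zeta_i$ of $Q_\psi$ by $q^r$, so $Q_\psi$ base-changes to $(1-q^rT)^{18}$ and $P_{X_{\diamond,\psi}}$ to $(1-q^rT)^{18}R_\psi^{(r)}(T)$, where $R_\psi^{(r)}$ denotes the polynomial whose reciprocal roots are the $r$-th powers of those of $R_\psi$. Substituting into \eqref{eqn:PXT} yields $Z(X_{\diamond,\psi}/\F_{q^r},T)=\frac{1}{(1-T)(1-q^rT)^{19}(1-q^{2r}T)R_\psi^{(r)}(T)}$, which is exactly the base change to $\F_{q^r}$ of Kadir's identity, namely $Z(\mathrm H_\psi/\F_{q^r},T)$. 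This gives the claimed equality.

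The hard part is the second step: the count $22-D=19$ for a generic complex fiber is immediate, but transporting it to \emph{every} smooth fiber over \emph{every} finite field of good characteristic — showing the algebraic part is genuinely constant with Frobenius eigenvalues of the form $q\zeta$ even for $\psi$ not arising from an algebraic point, identifying the excluded primes as exactly $\{2,5,7\}$, and choosing the exponent $r_0$ uniformly in $q$, $p$, and $\psi$ — is where the real work lies, and requires care with specialization of algebraic cycles, good reduction of K3 surfaces, and the monodromy of the pencil.
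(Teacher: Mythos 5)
Your overall strategy---factor $P_{X_{\diamond,\psi}}=Q_\psi R_\psi$ with $\deg R_\psi=3$ via Theorem~\ref{thm:sameweights} and Corollary~\ref{cor:dworksolved}, show the reciprocal roots of $Q_\psi$ have the form $q\zeta$ with $\zeta$ a root of unity of uniformly bounded order, and then pass to $\F_{q^r}$ with $r_0\mid r$ so that $Q_\psi$ collapses to $(1-q^rT)^{18}$ and Kadir's formula matches---is exactly the paper's (Theorem~\ref{thm:K3thm}). The gap, which you yourself flag, is in the middle step. The geometric route you sketch (the orthogonal complement of the Picard--Fuchs sub-local-system over $\C$ is a rank-$19$ variation of Hodge type $(1,1)$, hence algebraic with finite monodromy, then specialize to $\F_q$) would require a Frobenius-stable $\ell$-adic decomposition of $H^2$ compatible with the $p$-adic Dwork-cohomology construction of $R_\psi$, spreading out nineteen independent divisor classes over $\Z[\tfrac{1}{70}]$ on all five pencils, and a separate argument for supersingular fibers. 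None of this is carried out, and the compatibility between the $p$-adic and $\ell$-adic pictures here is not a formality.

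The paper bypasses the geometry entirely and proves the root-of-unity statement by a short valuation-theoretic argument (Proposition~\ref{prop:Pdiamondq}). For an ordinary fiber, any reciprocal root $\beta$ of $P_{\diamond,\psi,q}$ other than the unit root $u(\psi)$ and its dual $q^2/u(\psi)$ satisfies: $|\beta/q|_v=1$ at every archimedean place $v$ by the Weil bounds; $|\beta/q|_\ell=1$ for $\ell\neq p$ by the functional equation $\beta\beta'=q^2$; and $\ord_q\beta\geq 1$ because ordinarity forces the Newton polygon to lie over the Hodge polygon with slope-zero segment of length exactly one. Hence $\zeta=\beta/q$ is an algebraic integer with all absolute values equal to $1$, so a root of unity by Kronecker's theorem. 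For supersingular fibers the same conclusion is immediate from the Tate conjecture for K3 surfaces. The uniformity of $r_0$ then follows from a degree count: each $\zeta$ lies in a number field of degree at most $21$, so $\varphi(\ord\zeta)\leq 21$. Finally, the restriction to $p\neq 2,5,7$ is verified in the paper by a direct Magma check of smoothness and nondegeneracy---your determinant heuristic is not by itself sufficient, since nondegeneracy involves every face of the Newton polytope, not only the top-dimensional one. Replacing your second paragraph with this arithmetic argument, and keeping your first and third essentially as written, gives the paper's proof.
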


\noindent Accordingly, we could say that the zeta functions $Z(X_{\diamond,\psi}/\F_q,T)$ and $Z(\mathrm{H}_{\psi}/\F_q,T)$ are \defi{potentially equal}---i.e., they are equal after a finite extension of $\F_q$.  (The explicit value of $r_0$ in Corollary \ref{cor:yupyup} will be computed in future work \cite{paperB}.)

Finally, we remark on a simple relationship between the numbers of points of members of alternate mirror families over $\F_q$, reminiscent of the \emph{strong arithmetic mirror symmetry} studied by Fu--Wan \cite{fw}, Wan \cite{wan}, and Magyar--Whitcher \cite{mw}.

\begin{cor} \label{cor:corsmoth}
Let $X_{A,\psi}$ and $X_{B,\psi}$ be invertible pencils of Calabi--Yau $(n-1)$-folds in $\PP^n$ such that $A,B$ have the same dual weights.  Then for all $\psi \in \mathbb{F}_q$, 
\[\#X_{A,\psi}(\mathbb{F}_q) \equiv \#X_{B,\psi}(\mathbb{F}_q) \pmod{q}.\]
\end{cor}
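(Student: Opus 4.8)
The plan is to reduce the statement to a congruence between affine point counts, and then to extract that congruence from the additive-character expansion, exploiting the relation $\sum_i q_i a_{ij}=d^T$ that defines the dual weights. Write $f=F_{A,\psi}$ and $N_f\colonequals\#\{x\in\A^{n+1}(\F_q):f(x)=0\}$. Since $f$ is homogeneous of degree $n+1$, every nonzero affine zero lies on a unique line through the origin, so $N_f=1+(q-1)\#X_{A,\psi}(\F_q)$; and as $q-1$ is a unit modulo $q$ with $(q-1)^{-1}\equiv-1$, this gives $\#X_{A,\psi}(\F_q)\equiv 1-N_{F_{A,\psi}}(\F_q)\pmod q$, and likewise for $B$. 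So it suffices to prove $N_{F_{A,\psi}}(\F_q)\equiv N_{F_{B,\psi}}(\F_q)\pmod q$ for every $\psi\in\F_q$; note that this reduction is insensitive to whether the fibers are smooth or nondegenerate.

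Modulo $p$ this is quick and displays the mechanism. On $\F_q$ one has $f(x)^{q-1}=1$ when $f(x)\ne0$ and $0$ otherwise, so $N_f\equiv\sum_x\bigl(1-f(x)^{q-1}\bigr)\equiv-\sum_x f(x)^{q-1}\pmod p$. Expanding $\bigl(\sum_{i=0}^n x^{a_{i\bullet}}-d^T\psi\,x_0\cdots x_n\bigr)^{q-1}$ by the multinomial theorem and using $\sum_{x_j\in\F_q}x_j^{E}\equiv 0\pmod p$ unless $E$ is a positive multiple of $q-1$, only the multi-indices for which every coordinate-exponent equals $q-1$ survive (those exponents are nonnegative and sum to $(n+1)(q-1)$ because $F_A$ is homogeneous of degree $n+1$). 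Such multi-indices are precisely the nonnegative integer solutions of $\ell^{T}A=(q-1-\ell_\star)\mathbf 1^{T}$, and since $\det A\ne 0$, $q^{T}A=d^T\mathbf 1^{T}$, and $\gcd(q_i)=1$, they are exactly $\ell_i=uq_i$, $\ell_\star=q-1-d^Tu$ for $0\le u\le(q-1)/d^T$. Hence $N_f\bmod p$ is an explicit expression in $(q_i)_i$, $d^T$, $q$ and $\psi$ alone, which already gives $N_{F_{A,\psi}}\equiv N_{F_{B,\psi}}\pmod p$.

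To upgrade to modulus $q$, I would replace the crude indicator by the exact identity $qN_f=q^{n+1}+\sum_{t\in\F_q^\times}\sum_x\Psi(tf(x))$ for a nontrivial additive character $\Psi$, and expand each factor $\Psi\bigl(t\cdot(\text{monomial})\bigr)$ through Dwork's splitting function $\theta(T)=\sum_{m\ge0}\lambda_m T^m$, with $v_\pi(\lambda_m)\ge m$ ($\pi$ a uniformizer of $\Z_p[\zeta_p]$), and Teichmüller lifts --- the $p$-adic machinery of Dwork and Adolphson--Sperber that underlies Theorem~\ref{thm:sameweights}. Carrying out the sums over $x$ and over $t$ writes $S_f\colonequals\sum_{t,x}\Psi(tf(x))$ as $(q-1)\sum\bigl(\textstyle\prod_i\mu_{M_i}\bigr)\mu_{M_\star}(\widehat{-d^T\psi})^{M_\star}\,\mathcal N\bigl(\sum_i M_i a_{i\bullet}+M_\star\mathbf 1\bigr)$, summed over $(M_0,\dots,M_n,M_\star)$, where $\mathcal N(E)=\prod_j n(E_j)$ with $n(0)=q$, $n(E_j)=q-1$ for $E_j$ a positive multiple of $q-1$, and $n(E_j)=0$ otherwise, and where $v_\pi(\mu_M)$ is at least the base-$p$ digit sum of $M$. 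Working modulo $q^2$ (which, after division by $q$, determines $N_f\bmod q$): the terms in which two of the $E_j$ vanish die from the factor $q^2$ in $\mathcal N$, and the terms whose multi-index has large digit sum die from $v_\pi(\prod_i\mu_{M_i})$; the remaining finitely many satisfy $\sum_i M_i a_{ij}\equiv\sum_i M_i\pmod{q-1}$ for all $j$ (or a degenerate variant when one $E_j$ vanishes), and --- exactly as in the previous paragraph --- $q^{T}A=d^T\mathbf 1^{T}$ with $\det A\ne0$ forces the relevant low-digit-sum solutions to depend only on $(q_i)_i$. Since $x_0\cdots x_n$ occurs in $F_{A,\psi}$ and $F_{B,\psi}$ with the same coefficient $-d^T\psi$, this yields $S_{F_{A,\psi}}\equiv S_{F_{B,\psi}}\pmod{q^2}$ and hence the claim.

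The hard part will be the modulo-$q^2$ bookkeeping of the previous paragraph, in two respects. First, one must make rigorous that the low-digit-sum solutions of $\sum_i M_i a_{ij}\equiv\sum_i M_i\pmod{q-1}$ contribute only through data determined by $(q_0,\dots,q_n)$: over $\Q$ the system $M^{T}A=(\sum_i M_i)\mathbf 1^{T}$ has the single solution ray $M\propto q$, but the modulus $q-1$ introduces extra solutions, and one needs Stickelberger/Gross--Koblitz-type lower bounds on $v_\pi(\lambda_m)$ to see that these have digit sum too large to survive modulo $q^2$. Second, and more seriously, one must control the multi-indices for which exactly one $E_j$ vanishes --- geometrically, the strata on which a single coordinate is forced to $0$. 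These see the support pattern of $A$, not just its dual weights, and they genuinely differ between the two pencils: already for the Hesse cubic $x_0^3+x_1^3+x_2^3-3\psi x_0x_1x_2$ versus the loop cubic $x_0^2x_1+x_1^2x_2+x_2^2x_0-3\psi x_0x_1x_2$, the contributions of $\{x_2=0\}$ are incongruent modulo $q$, so the desired equality only emerges after all strata are summed, in the spirit of the strong arithmetic mirror symmetry of Fu--Wan, Wan, and Magyar--Whitcher. Carrying this out uniformly in $\psi$ --- including over the singular and degenerate fibers, where the cohomological shortcuts are unavailable --- is the principal obstacle.
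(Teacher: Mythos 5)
Your reduction to affine counts and the Chevalley--Warning-style computation modulo $p$ are correct, and the observation that $q^{T}A = \dt\mathbf 1^{T}$ together with $\det A\neq 0$, $\gcd(q_i)=1$ pins down the surviving multi-indices to $\ell_i=uq_i$ is exactly the right use of the dual-weight relation. But the statement to be proved is a congruence modulo $q$, not modulo $p$, and the modulo-$q$ upgrade you sketch is not carried through. You have, commendably, identified the exact place it stalls: once you pass to Dwork's splitting function and work modulo $q^2$, you must handle the multi-indices for which some $E_j$ vanishes, i.e.\ the strata where a coordinate is forced to zero, and these ``see'' the full support pattern of $A$, not merely the dual weights. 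Your own Hesse-versus-loop cubic example shows the stratum contributions genuinely disagree, so the desired cancellation would have to emerge from a global inclusion--exclusion whose bookkeeping you have not supplied. In addition, the claim that low-digit-sum solutions of $\sum_i M_ia_{ij}\equiv\sum_i M_i\pmod{q-1}$ are controlled by $(q_i)_i$ alone is not established---over $\Z$ one gets the ray $M\propto q$ as you say, but passing to the congruence class introduces extra solutions, and the Stickelberger-type valuation bounds needed to rule them out are invoked but not verified. As written, this is a plausible program, not a proof.

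The paper avoids this combinatorics entirely and argues through the unit root. It first proves (Proposition~\ref{prop:unitroot}) that for any $\psi\in\F_q$, either both $X_{A,\psi}$ and $X_{B,\psi}$ have no unit root, or they have the \emph{same} unit root: this follows because the Adolphson--Sperber formula for the unit root is an $A$-hypergeometric series whose lattice of relations and character vector are determined by the dual weights alone (this is precisely where the hypothesis that $A$ and $B$ share dual weights enters, and it works even for non-smooth fibers and when $p\mid\dt$ after a mild renormalization). One then observes that for a Calabi--Yau hypersurface in $\PP^n$ the reciprocal roots of $P_X$ other than the unit root all have $\ord_q\geq 1$, so $\#X(\F_q)\bmod q$ is $1+q+\cdots+q^{n-1}+(-1)^n u\equiv 1+(-1)^n u\pmod q$ where $u$ is the unit root (or $0$ in the supersingular case); equality of the $u$'s gives the corollary at once. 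The advantage of this route is that all the stratum-level discrepancy you worried about is absorbed into the proof of the unit-root formula itself, which is quoted rather than re-derived; if you want to pursue your elementary approach, the right thing to compare against is the line of reasoning in Fu--Wan and Wan that your final paragraph already gestures at.
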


Corollary \ref{cor:corsmoth} is slightly more general than Theorem \ref{thm:sameweights}---there is no hypothesis on the characteristic or on the smoothness of the fiber---but it arrives at a weaker conclusion.

\subsection{Plan of paper} 

In section 2, we introduce our cohomological setup.  In section 3, we consider first the generalized Klein--Mukai family as a warmup to the main theorem, giving a detailed treatment in this case.  In section 4, we prove the main result by recasting a result of G\"ahrs \cite{Gahrs} on Picard--Fuchs equations in hypergeometric terms, study the invariance under symmetry of the middle cohomology, and then apply Dwork cohomology.  To conclude, in section 5, we specialize to the case of K3 surfaces and give some further details for several pencils of particular interest.

\subsection{Acknowledgements}

The authors heartily thank Marco Aldi, Amanda Francis, Xenia de la Ossa, Andrija Peruni\v{c}i\'c, and Noriko Yui for many interesting discussions, as well as Alan Adolphson, Remke Kloosterman, Yang Liping, Fernando Rodriguez--Villegas, Duco van Straten, and the anonymous referee for helpful comments.  They thank the American Institute of Mathematics and its SQuaRE program, the Banff International Research Station, the Clay Mathematics Institute, MATRIX in Australia, and SageMath for facilitating their work together. Doran was supported by  NSERC and the Campobassi Professorship at the University of Maryland. Kelly acknowledges that this material is based upon work supported by the NSF under Award No.\ DMS-1401446 and the EPSRC under EP/N004922/1.  Voight was supported by an NSF CAREER Award (DMS-1151047). 

\section{Cohomological Setup} \label{sec:nondegn}

We begin in this section by setting up notation and establishing a few basic results.  In the cohomology theory of Dwork, following the approach  for related exponential sums as developed by Adolphson--Sperber \cite{AS,AS08}, we will define cohomology spaces endowed with a Frobenius operator with the property that the middle-dimensional primitive factor of the zeta function is realized as the characteristic polynomial of the Frobenius operator acting on non-vanishing cohomology.  
We refer to the work of Adolphson--Sperber for further reference and to Sperber--Voight \cite{SV} for an algorithmic framing.

Throughout the paper, let $\F_q$ be a finite field with $q$ elements and characteristic $p$, with $q=p^a$. Let $\overline{\F}_q$ be an algebraic closure of $\F_q$.  

\subsection{Nondegeneracy and convenience}

Let $F(x)=F(x_0, \dots, x_n) \in \F_q[x_0,\dots,x_n]$ be a nonconstant homogeneous polynomial, so that the vanishing of $F(x)$ defines a projective hypersurface $X \subseteq \PP^{n}_{\F_q}$.  Using multi-index notation, we write 
\[ F(x)=\sum_{\nu \in \Z_{\geq 0}^{n+1}} a_\nu x^{\nu} \]
and $\left|\nu\right|=\sum_{i=0}^{n+1} \nu_i$.  Let $\supp F = \{\nu \in \Z_{\geq 0}^{n+1} : a_\nu \neq 0\}$.  Let $\Delta$ be the convex hull of $\supp F$ and let $\Delta_\infty(F)$ be the convex hull of $\Delta \cup \{(0,\dots,0)\}$ in $\R^{n+1}$.   For a face $\tau \subseteq \Delta$, let 
\[ F|_\tau = \sum_{\nu \in \tau} a_\nu x^\nu. \]

\begin{defn} \label{def:nondeg}
We say $F$ is \defi{nondegenerate} (\defi{with respect to its Newton polyhedron $\Delta $}) if for all faces  $\tau \subseteq \Delta$,  (including $\tau=\Delta$), the system of equations
\begin{equation} \label{eqn:Deltanondeg} 
F|_\tau = \frac{\partial F|_\tau}{\partial x_0} = \dots = \frac{\partial F|_\tau}{\partial x_n} = 0
\end{equation}
has no solutions in $\overline{\F}_q^{\times (n+1)}$.  
\end{defn}

In this case, with $F$ homogeneous, the definition employed by Adolphson and Sperber, that $F$ is \defi{nondegenerate} (\defi{with respect to  $\Delta_{\infty}(F) $}) requires that the system of equations
\begin{equation}  
 \frac{\partial F|_\tau}{\partial x_0} = \dots = \frac{\partial F|_\tau}{\partial x_n} = 0
\end{equation}
has no solutions in $\overline{\F}_q^{\times (n+1)}$ for every face  $\tau \subseteq \Delta$,  (including $\tau=\Delta$). Note that when the characteristic $p$ does not divide the degree of $F$, the Euler relation ensures the two definitions are equivalent.
Finally, we observe if $w$ is a new variable and we consider the form $wF$ then $wF$ is nondegenerate with respect to $\Delta_{\infty}(wF)$ if and only if $F$ is nondegenerate with respect to its Newton polyhedron $\Delta$.

In the calculations below we will make use of a certain positioning of coordinates.  For a subset $J \subseteq \{x_0,\dots,x_n\}$ of variables, we let $F_{\cancel{J}}$ be the polynomial obtained from $F$ by setting the variables in $J$ equal to zero.

\begin{defn}
We say that $F$ is \defi{convenient} with respect to a subset $S \subseteq \{x_0,\dots,x_n\}$ provided that for all subsets $J \subseteq S$, we have
\[ \dim \Delta_\infty(F_{\cancel{J}}) = \dim \Delta_\infty(F)-\# J. \]
\end{defn}

\subsection{Dwork cohomology}

Let $\G_m$ be the multiplicative torus (so $\G_m(\F_q)=\F_q^\times$) and fix a nontrivial additive character $\Theta:\F_q \to \C^\times$ of $\F_q$. Denote by $\Tr_{\F_{q^r}/\F_q} : \F_{q^r} \to \F_q$ the field trace. We will effectively study the important middle dimensional factor of the zeta function by considering an appropriate exponential sum on $\G_m^s \times \A^{n+1-s}$ and treating toric and affine variables somewhat differently.   
For $r \in \Z_{\geq 1}$, define
\[ S_r(F, \G_m^s \times \A^{n+1-s}) \colonequals  \sum_{x \in (\G_m^s \times \A^{n+1-s})(\F_{q^r})} \Theta \circ \Tr_{\F_{q^r}/\F_q} F(x), \]
where the sum runs over all $n+1$-tuples $x=(x_0, \ldots, x_n)$ where $x_0,\dots,x_{s-1} \in \F_{q^r}^\times$ and $x_s,\dots,x_n \in \F_{q^r}$.  Consider the $L$-function of the exponential sum associated to $F$ defined by
\[ L(F,\G_m^s \times \A^{n+1-s},T) \colonequals  \exp\left(\sum_{r=1}^{\infty} S_r \frac{T^r}{r}\right). \]
Then $L(F,\G_m^s \times \A^{n+1-s},T) \in \Q(\zeta_p)(T)$ is a rational function in $T$ with coefficients in the cyclotomic field $\Q(\zeta_p)$, where $\zeta_p$ is a primitive $p$th root of unity.

\begin{thm}[{\cite[Theorem 2.9, Corollary 2.19]{AS}}] \label{thm:AS} 
If $F$ is nondegenerate and convenient with respect to $S=\{x_{s+1},\dots,x_n\}$, and $\dim \Delta_\infty(F)=n+1$, then the $L$-function
\[ L(F, \G_m^s \times \A^{n+1-s},T)^{(-1)^{n+1}} \in \Q(\zeta_p)[T] \]
is a polynomial in $T$ with coefficients in $\Q(\zeta_p)$ of degree given explicitly in terms of the volumes $\vol \Delta_\infty(F_{\cancel{J}})$ for ${J \subseteq S}$.  
\end{thm}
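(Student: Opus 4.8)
The plan is to run the $p$-adic cohomological machinery of Dwork, in the toric-and-affine refinement of Adolphson--Sperber. First I fix the unramified extension $K$ of $\Q_p$ with residue field $\F_q$, adjoin an element $\pi$ with $\pi^{p-1}=-p$, and let $\theta(t)$ be Dwork's splitting function, an Artin--Hasse-type power series converging in a disc of radius $>1$ with $\theta(1)=\zeta_p$; then $\Theta(\Tr_{\F_{q^r}/\F_p}y)=\prod_i\theta(\hat y^{\,p^i})$ for the Teichm\"uller lift $\hat y$. Writing $F=\sum_\nu a_\nu x^\nu$ and lifting the $a_\nu$ to Teichm\"uller representatives, the "Dwork kernel" $\prod_\nu\theta(\hat a_\nu x^\nu)$ is a power series whose coefficient of $x^\mu$ decays at a rate prescribed by how far $\mu$ lies outside the dilations of $\Delta_\infty(F)$. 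From this one builds a $p$-adic Banach space $\calC$ of series in $x_0^{\pm1},\dots,x_{s-1}^{\pm1},x_s,\dots,x_n$ with growth conditions governed by $\Delta_\infty(F)$ (Laurent in the toric slots $i<s$, ordinary in the affine slots $i\ge s$), together with a completely continuous Frobenius operator $\alpha=\psi_q\circ(\text{mult.\ by the Dwork kernel})$, where $\psi_q$ is the Dwork contraction $x^\mu\mapsto x^{\mu/q}$, nonzero only when $q\mid\mu$, with the further requirement $\mu_i\ge 0$ in the affine slots.

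Next I would invoke the Dwork trace formula: each $S_r(F,\G_m^s\times\A^{n+1-s})$ equals an alternating sum of traces of $\alpha^r$ on the terms of a Koszul-type complex $(\Omega^\bullet,D)$ built from the operators $D_i=x_i\,\partial/\partial x_i+\pi\,x_i\,\partial F/\partial x_i$ for $i<s$ and $D_i=\partial/\partial x_i+\pi\,\partial F/\partial x_i$ for $i\ge s$ acting on $\calC$. Exponentiating, $L(F,\G_m^s\times\A^{n+1-s},T)^{(-1)^{n+1}}$ becomes an alternating product of the characteristic series $\det\bigl(1-T\alpha\mid H^j(\Omega^\bullet,D)\bigr)^{\pm1}$. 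The heart of the proof consists of two claims: (i) the complex $(\Omega^\bullet,D)$ is acyclic except in top degree $n+1$; and (ii) $H^{n+1}(\Omega^\bullet,D)$ is finite-dimensional over $K(\pi)$, of the asserted dimension. Granting these, the alternating product collapses to the single factor $\det\bigl(1-T\alpha\mid H^{n+1}(\Omega^\bullet,D)\bigr)$ --- the choice of exponent $(-1)^{n+1}$ being precisely what turns the reciprocal into a polynomial --- so $L(F,\G_m^s\times\A^{n+1-s},T)^{(-1)^{n+1}}$ is a polynomial over $K(\pi)$ of degree $\dim_{K(\pi)}H^{n+1}(\Omega^\bullet,D)$.

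For (i) one filters $\calC$ by the polyhedral "weight" and passes to the associated graded, where $D$ degenerates to the Koszul differential on the sequence of partial derivatives $x_i\,\partial\overline F/\partial x_i$ (resp.\ $\partial\overline F/\partial x_i$) inside the graded ring $R$ attached to $\Delta_\infty(F)$; since $\dim\Delta_\infty(F)=n+1$, nondegeneracy of $F$ with respect to its Newton polyhedron --- equivalently, after homogenizing, with respect to $\Delta_\infty$, via the Euler relation --- says exactly that these form a regular sequence in $R$, so the Koszul complex is a resolution and the higher cohomology vanishes. \emph{Convenience} with respect to $S=\{x_{s+1},\dots,x_n\}$ is what controls the affine directions: the condition $\dim\Delta_\infty(F_{\cancel J})=\dim\Delta_\infty(F)-\#J$ for every $J\subseteq S$ keeps the coordinate-subspace strata in general position, so the boundary pieces of $R$ behave correctly and the relevant graded components stay finite. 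For (ii) one computes $\dim_{K(\pi)}H^{n+1}$ as an Euler characteristic of the graded Koszul complex: the Hilbert--Poincar\'e series of $R$, combined with inclusion--exclusion over the $J\subseteq S$, expresses the dimension as an alternating sum of normalized volumes $(n+1-\#J)!\,\vol\Delta_\infty(F_{\cancel J})$ --- precisely the "degree given explicitly in terms of the volumes" of the statement. Finally, descent of the coefficients is immediate: each $S_r$ is a finite sum of $p$-th roots of unity, hence lies in $\Z[\zeta_p]$, so $L(F,\G_m^s\times\A^{n+1-s},T)\in\Q(\zeta_p)(T)$ as already noted; together with the polynomiality just established this gives $L(F,\G_m^s\times\A^{n+1-s},T)^{(-1)^{n+1}}\in\Q(\zeta_p)[T]$. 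The main obstacle, by a wide margin, is the pair (i)--(ii): making the polyhedral commutative algebra precise in the mixed toric--affine setting and verifying that nondegeneracy and convenience are exactly the hypotheses under which the associated-graded Koszul complex is a complete intersection of the expected codimension with the expected Hilbert series; everything else is bookkeeping around the trace formula.
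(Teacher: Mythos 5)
This theorem is not proved in the paper: it is quoted with a citation to Adolphson--Sperber \cite[Theorem 2.9, Corollary 2.19]{AS}, and the paper simply records its conclusion and how it will be used, so there is no paper proof against which to compare your attempt. That said, your outline is a faithful summary of the Adolphson--Sperber argument itself: Dwork's splitting function and trace formula; the mixed toric--affine $p$-adic Banach space with a completely continuous Frobenius operator; the Koszul-type complex on the differential operators $D_i$ (in the toric form $x_i\,\partial/\partial x_i+\pi x_i\,\partial F/\partial x_i$ for $i<s$ and the affine form $\partial/\partial x_i+\pi\,\partial F/\partial x_i$ for $i\ge s$); the polyhedral-weight filtration reducing the cohomology, in the associated graded, to a Koszul complex over the graded ring of the cone on $\Delta_\infty(F)$; nondegeneracy supplying the regular-sequence condition (with $\dim\Delta_\infty(F)=n+1$ so that $n+1$ elements can be regular), hence acyclicity below the top degree; convenience controlling the coordinate strata in the affine directions; and the Euler-characteristic/Hilbert-series computation giving the degree as the alternating sum $\sum_{J\subseteq S}(-1)^{\#J}(n+1-\#J)!\,\vol\Delta_\infty(F_{\cancel{J}})$. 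You correctly isolate the acyclicity and dimension-count claims as the substance of the proof and correctly locate where each hypothesis enters.
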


This theorem also gives information about the $p$-adic size of the reciprocal zeros of $L(T)^{(-1)^{n+1}}$.  

We now proceed to relate the $L$-function of such an exponential sum to the zeta function of the corresponding hypersurface. In general, we write
\begin{equation}  \label{eqn:Zingen}
Z(X,T) \colonequals \exp\left(\sum_{r=1}^{\infty} \#X(\F_{q^r})\frac{T^r}{r}\right) =  \frac{P(T)^{(-1)^n}}{(1-T)\cdots (1-q^{n-1}T)} 
\end{equation}
with $P(T) \in \Q(T)$.  If $X$ is smooth and $F$ has degree $d$, then $P(T)$ is a polynomial of degree 
\begin{equation} \label{eqn:degP}
\deg P = \frac{d-1}{d}((d-1)^n+(-1)^{n+1}),
\end{equation}
representing the characteristic polynomial of Frobenius acting on the primitive middle-dimensional cohomology of $X$. 
 Let $Y \subseteq \A^{n+1}$ be the affine hypersurface defined by the vanishing of $F$, the cone over $X$.  Let $w$ be a new variable.  A standard argument with character sums shows that
\begin{equation} \label{eqn:numofzeros}
S_r(wF,\A^{n+2}) = q^r \#Y(\F_{q^r}). 
\end{equation}
Therefore
\[ L(wF,\A^{n+2},T) = Z(Y,qT). \]
On the other hand, one has
\[ Z(Y,T)=\frac{Z(X,qT)}{Z(X,T)(1-T)}. \]
So putting these together we have
\begin{equation} \label{eqn:LwF}
L(wF,\A^{n+2},T)=\frac{Z(X,q^2T)}{Z(X,qT)(1-qT)}. 
\end{equation}

By combining Equations~\eqref{eqn:Zingen} and~\eqref{eqn:LwF}, we have
\begin{equation} 
L(wF,\A^{n+2},T) = \left(\frac{P(qT)}{P(q^2T)}\right)^{(-1)^{n+1}} \frac{1}{1-q^{n+1}T}. 
\end{equation}
Finally, splitting the domain for the variable $w$ as $\A^1=\G_m \cup \{0\}$, we obtain
\begin{equation} \label{eqn:LPqT}
L(wF,\G_m \times \A^{n+1},T)^{(-1)^{n+1}}=\frac{P(qT)}{P(q^2T)}. 
\end{equation}

In the special case where $F$ is nondegenerate with respect to $\Delta_\infty(F)$ and convenient with respect to $\{x_0,\dots,x_n\}$, Theorem \ref{thm:AS} applies.   Under these hypotheses, Adolphson\textendash Sperber \cite[Section 6]{AS} prove the following: there exists a $p$-adic cohomology complex $\Omega^{\bullet}$ such that the trace formula
\begin{equation} \label{eqn:KMtrace}
L(wF,\G_m \times \A^{n+1},T) = \prod_{i=0}^{n+2} \det(1-\Frob T \mid H^i(\Omega^{\bullet}))^{(-1)^{i+1}} 
\end{equation}
holds, the cohomology groups $H^i(\Omega^{\bullet})$ vanish for $i=0,\dots,n$, we have
\begin{equation} \label{eqn:KMident}
 \Frob \mid H^{n+1}(\Omega^{\bullet})= q \Frob \mid H^{n+2}(\Omega^{\bullet}),
 \end{equation}
and finally
\begin{equation} \label{eqn:PqT}
P(qT)=\det(1-\Frob T \mid H^{n+2}(\Omega^{\bullet})). 
\end{equation}
For more details, see also Adolphson\textendash Sperber \cite[Corollary 6.23]{AS08} and Sperber\textendash Voight \cite[Section 1 and pages 31-32]{SV}.
In particular, the formula \eqref{eqn:PqT} gives a fairly direct way to compute $P(T)$ in the case of the Dwork family of hypersurfaces, since the defining polynomial $F$ is convenient with respect to the full set of variables $\{x_0,\dots,x_n\}$.

\subsection{Unit roots} \label{sec:unitroot}

For convenience, we conclude this section by recalling the relationship between Hodge numbers and the $p$-adic absolute values of the reciprocal zeros and poles of the zeta function.  

The following is a consequence of the Katz conjecture proved in full generality by  Mazur \cite{Mazur}. In the present context, it follows directly from Adolphson--Sperber \cite[Theorem 3.10]{AS}.

\begin{prop} \label{prop:Newtonoverhodge}
The Newton polygon of $P_{\diamond,\psi,q}(T)$ lies over the Hodge polygon of middle-dimensional primitive cohomology.
\end{prop}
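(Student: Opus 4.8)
\textit{Proof proposal.} The plan is to read the statement off the $p$-adic machinery already assembled in \S\ref{sec:nondegn}, so that only a bookkeeping step remains. Fix a smooth nondegenerate fiber $X = X_{\diamond,\psi}$ over $\F_q$ with defining polynomial $F = F_{\diamond,\psi}$, introduce the auxiliary variable $w$, and recall from \eqref{eqn:LPqT} and \eqref{eqn:PqT} that $P_{\diamond,\psi,q}(qT) = \det(1-\Frob T \mid H^{n+2}(\Omega^\bullet))$, where $\Omega^\bullet$ is the Adolphson--Sperber $p$-adic complex attached to $wF$ on $\G_m \times \A^{n+1}$; the hypotheses needed for this (nondegeneracy of $wF$ with respect to $\Delta_\infty(wF)$, convenience in the $w$-variable) hold by the remark in \S\ref{sec:nondegn} that $wF$ is nondegenerate with respect to $\Delta_\infty(wF)$ precisely when $F$ is nondegenerate with respect to its Newton polyhedron. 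In particular the reciprocal roots of $P_{\diamond,\psi,q}(T)$ are those of the $H^{n+2}$-factor of $L(wF,\G_m\times\A^{n+1},T)^{(-1)^{n+1}}$ divided by $q$, so the Newton polygon (for $\ord_q$) of $P_{\diamond,\psi,q}(T)$ is that of the $L$-factor translated down along the line of slope $1$.

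Next I would invoke Adolphson--Sperber \cite[Theorem 3.10]{AS}: under the same nondegeneracy hypothesis, the Newton polygon of $L(wF,\G_m\times\A^{n+1},T)^{(-1)^{n+1}}$ lies on or above the combinatorial Hodge polygon attached to $\Delta_\infty(wF)$, and the two polygons share endpoints. The remaining task is to identify this combinatorial Hodge polygon, after the slope-$1$ shift of the previous paragraph, with the Hodge polygon of the primitive middle-dimensional cohomology $H^{n-1}_{\mathrm{prim}}(X)$: for a smooth nondegenerate hypersurface of degree $d=n+1$ in $\PP^n$ the primitive Hodge numbers $h^{p,n-1-p}_{\mathrm{prim}}$ are computed by lattice-point counts in the graded pieces of the cone over $\Delta_\infty(F)$ (equivalently by the Jacobian ring), which is exactly how the Adolphson--Sperber polygon for $wF$ is built; tracking the shift shows the resulting slopes are $0,1,\dots,n-1$ with multiplicities $h^{n-1,0}_{\mathrm{prim}},\dots,h^{0,n-1}_{\mathrm{prim}}$, and the right-hand endpoint is $\bigl(\deg P,\ \sum_p p\,h^{n-1-p,p}_{\mathrm{prim}}\bigr)$ with $\deg P$ as in \eqref{eqn:degP}, matching the endpoints of the Newton polygon of $P_{\diamond,\psi,q}(T)$. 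Combining the two inputs gives the proposition. (Conceptually this is just the Katz conjecture, proved in general by Mazur \cite{Mazur}, applied to the smooth projective $X$; the route through \cite{AS} is what makes it available in the exact normalization used here.)

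The one place demanding care is this last identification: one must check that the combinatorial Hodge polygon produced by \cite[Theorem 3.10]{AS} for the exponential sum on $\G_m\times\A^{n+1}$, once renormalized through \eqref{eqn:LPqT}--\eqref{eqn:PqT}, is genuinely the Hodge polygon of $H^{n-1}_{\mathrm{prim}}(X)$ and not a twist or reindexing of it — in particular that the endpoints agree, since that is what makes ``lies over'' meaningful rather than vacuous. Everything else is direct consequence of the formulas already recorded in \S\ref{sec:nondegn}.
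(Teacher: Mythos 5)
Your proposal is correct and follows exactly the same route the paper takes: the paper's proof consists precisely of the two citations you invoke — Mazur's proof of the Katz conjecture for the general statement, and Adolphson--Sperber \cite[Theorem 3.10]{AS} for the version in the normalization of \S\ref{sec:nondegn}. You have usefully spelled out the renormalization through \eqref{eqn:LPqT}--\eqref{eqn:PqT} and the endpoint-matching that the paper leaves implicit with ``it follows directly.''
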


We now apply this to our invertible pencils, as defined in \eqref{eqn:invertpencil}.  In particular, we have $X_{A,\psi}$ a smooth projective hypersurface in $\PP^n$ defined by a polynomial $F_{A,\psi}$ of degree $n+1$, so $X_{A,\psi}$ is a Calabi--Yau variety of dimension $n-1$.  By a standard calculation, the first Hodge number of $X_{A,\psi}$ is $h^{0,n-1}=1$.  Therefore the Hodge polygon of middle-dimensional primitive cohomology starts with a segment of slope zero having length $1$.  By Proposition \ref{prop:Newtonoverhodge}, there is at most one reciprocal root of the polynomial $P_{\diamond,\psi,q}(T)$ that is a $p$-adic unit: we call this reciprocal root when it occurs a \defi{unit root}. 

\begin{exm}
If $n=3$ and $\deg F=4$, and $X$ is smooth, then $X$ is a quartic K3 surface, and so the Newton polygon of $P_{\diamond,\psi,q}(T)$ lies over the Hodge polygon (i.e., the Newton polygon of  $(1-T)(1-qT)^{19} (1-q^2 T)$).
\end{exm}

There is a polynomial defined over $\mathbb{F}_p$ depending on $A$, called the \defi{Hasse invariant}, with the property that $H_A(\psi) \neq 0$ for a smooth fiber $\psi \in \F_q^\times$ if and only if there is a unique unit root. In this case,  we call $X_{A,\psi}$ \defi{ordinary}, otherwise we say $X_{A,\psi}$ is \defi{supersingular}. The polynomial $H_A$ is nonzero as the monomial $x_0x_1\dots x_n$ appears in $F_{A,\psi}$ \cite[(1.9), Example 1]{AS16} (in their notation, we have $\mu=0$).  Therefore, the ordinary sublocus of $\PP^1 \smallsetminus \{0,1,\infty\}$ is a nonempty Zariski open subset.  This unit root has seen much study: for the Dwork family, it was investigated by Jeng-Daw Yu \cite{yu}, and in this generality by Adolphson--Sperber \cite[Proposition 1.8]{AS16} (see also work of Miyatani \cite{Miyatani}).  

These $p$-adic estimates can be seen explicitly in Dwork cohomology, as follows.  By \eqref{eqn:PqT}, for the hypersurface $X_{A,\psi}$ we are interested in the action of $q^{-1}\Frob$ on the cohomology group $H^{n+2}(\Omega^{\bullet})$.

\begin{lem} \label{lem:qinFrob}
The operator $q^{-1}\Frob$ acting on $H^{n+2}(\Omega^{\bullet})$ reduced modulo $p$ has rank at most $1$ and has rank exactly $1$ if and only if $X_{A,\psi}$ is ordinary.  
\end{lem}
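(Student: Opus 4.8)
The plan is to connect the rank statement directly to the unit-root analysis already recalled in this section, using the Hodge-theoretic constraint of Proposition~\ref{prop:Newtonoverhodge} together with the defining property of the Hasse invariant $H_A$. First I would recall from \eqref{eqn:PqT} that $P(qT)=\det(1-\Frob\, T\mid H^{n+2}(\Omega^\bullet))$, so the reciprocal roots of $P(T)$ are exactly $q^{-1}$ times the eigenvalues of $\Frob$ on $H^{n+2}(\Omega^\bullet)$; equivalently, the eigenvalues of $q^{-1}\Frob$ on $H^{n+2}(\Omega^\bullet)$ are the reciprocal roots of $P(T)$. By Proposition~\ref{prop:Newtonoverhodge} and the computation that $h^{0,n-1}(X_{A,\psi})=1$, the Newton polygon of $P(T)$ has exactly one possible slope-zero segment (of length $1$) and all other slopes are strictly positive; hence at most one eigenvalue of $q^{-1}\Frob$ on $H^{n+2}(\Omega^\bullet)$ is a $p$-adic unit, and the remaining eigenvalues all have positive $p$-adic valuation.

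Next I would translate this valuation statement into a rank statement modulo $p$. Fix a $\Frob$-stable $\Z_p$-lattice (or more precisely a lattice in the relevant $p$-adic Banach/Dwork cohomology on which $\Frob$ acts integrally after the standard normalization; this is part of the Adolphson--Sperber setup cited above, where $q^{-1}\Frob$ is known to be integral on $H^{n+2}$). Eigenvalues of positive valuation contribute nilpotently to the reduction $\overline{q^{-1}\Frob}$ modulo $p$, while the at-most-one unit eigenvalue contributes at most a one-dimensional nonzero part. Therefore $\overline{q^{-1}\Frob}$ has rank at most $1$, and it has rank exactly $1$ if and only if the unit eigenvalue actually occurs, i.e.\ if and only if $P_{A,\psi,q}(T)$ has a unit reciprocal root. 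Finally, by the definition of the Hasse invariant recalled just above Lemma~\ref{lem:qinFrob} (via \cite[(1.9), Example 1]{AS16}, with $\mu=0$), for a smooth fiber the existence of a unique unit root is equivalent to $H_A(\psi)\neq 0$, which is precisely the definition of $X_{A,\psi}$ being ordinary. Combining these equivalences gives the lemma.

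The main obstacle I anticipate is making the passage ``positive valuation eigenvalues $\Rightarrow$ nilpotent mod $p$'' fully rigorous in the infinite-dimensional Dwork-theoretic setting: one must work with a well-chosen $\Frob$-stable integral structure on $H^{n+2}(\Omega^\bullet)$ and control the reduction carefully, rather than naively reducing a matrix. This is handled by the standard formalism of Adolphson--Sperber \cite{AS,AS08} (and is implicit in the $p$-adic estimates attached to Theorem~\ref{thm:AS}), so in the write-up I would cite that machinery rather than reprove it, and simply note that $q^{-1}\Frob$ is integral on $H^{n+2}$ with the single unit slope accounting for the $h^{0,n-1}=1$ part of the Hodge polygon. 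A secondary point to state cleanly is that the ``at most one unit root'' from Proposition~\ref{prop:Newtonoverhodge} together with the existence part coming from $H_A\not\equiv 0$ yields the sharp dichotomy ``rank $0$ or $1$, with $1$ iff ordinary,'' with no further case analysis needed.
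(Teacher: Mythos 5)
There is a genuine gap in the step where you pass from ``at most one eigenvalue of $q^{-1}\Frob$ is a $p$-adic unit, the rest have positive valuation'' to ``the reduction $\overline{q^{-1}\Frob}$ has rank at most $1$.'' Knowing the $p$-adic valuations of the eigenvalues (equivalently, the Newton polygon of the characteristic polynomial) does not control the rank of the reduction modulo $p$: a matrix all of whose eigenvalues have strictly positive valuation reduces to a \emph{nilpotent} matrix, but nilpotent matrices can have large rank. For instance,
\[
\begin{pmatrix} 0 & 1 & 0 \\ 0 & 0 & 1 \\ p & 0 & 0 \end{pmatrix}
\]
has all eigenvalues of valuation $1/3$, yet reduces mod $p$ to a rank-$2$ nilpotent matrix. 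So ``eigenvalues of positive valuation contribute nilpotently'' is true, but ``nilpotently'' is far weaker than ``rank zero,'' and Proposition~\ref{prop:Newtonoverhodge} together with $h^{0,n-1}=1$ by itself only gives you the nilpotence of the non-unit part, not a bound of $1$ on the rank of the whole reduction. You flagged this as a technical obstacle to be absorbed by the Adolphson--Sperber formalism, but it is a logical gap, not a bookkeeping issue: no amount of passing to a $\Frob$-stable integral lattice will make ``Newton polygon $\Rightarrow$ rank of reduction'' true.

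What the paper actually does, and what is genuinely needed, is an \emph{entry-wise} $p$-adic estimate rather than an eigenvalue estimate. Working in the explicit monomial basis $\{(\gamma w)^{|\nu|/d} x^\nu\}$ of $H^{n+2}(\Omega^\bullet)$, Adolphson--Sperber's bound \cite[Proposition~3.9]{AS} states $\ord_p a^0_{\mu\nu} \geq |\mu|/d$ for the matrix of $p$-Frobenius, whence $\ord_q(q^{-1}a_{\mu\nu}) \geq |\mu|/d - 1$ for the normalized $q$-Frobenius. Since the Calabi--Yau condition forces $\omega_0 = \gamma w x_0\cdots x_n$ to be the unique basis element with $|\mu|/d=1$, every row/column of $q^{-1}A$ except the one indexed by $\omega_0$ vanishes modulo $p$, which is what gives rank $\leq 1$ on the nose. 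The ``rank $=1$ iff ordinary'' direction then falls out immediately by looking at the diagonal entry $a_{\nu\nu}$ at $\nu=(1,\dots,1)$. Your appeal to the Hasse invariant and the ``at most one unit root'' reasoning is fine for the forward implication once rank $\leq 1$ is established, but the rank bound itself must come from the entry estimates, not from the Newton-over-Hodge statement.
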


\begin{proof}
The cohomology group $H^{n+2}(\Omega^{\bullet})$ has a basis of monomials $\{(\gamma w)^{\left|\nu\right|/d} x^{\nu}\}_{\nu}$ where $d \mid \left|\nu\right|$ and $\gamma \in \Z_p[\zeta_p]$ is a uniformizer \cite[Section 5, ``Modifications: projective varieties'']{SV}.   Let $A^0=(a^0_{\mu\nu})_{\mu,\nu}$ be the matrix of the $p$-Frobenius on this basis.  Then \cite[Proposition 3.9]{AS}
\begin{equation} \label{eqn:a0munu}
\ord_p a^0_{\mu\nu} \geq \frac{\left|\mu\right|}{d}.
\end{equation}
Let $A=(a_{\mu\nu})_{\mu,\nu}$ be the matrix of the $q$-Frobenius $\Frob$.  By \eqref{eqn:PqT}, we have $P(T)=\det(1-q^{-1}AT)$, and \eqref{eqn:a0munu} implies (as in Adolphson--Sperber \cite[Proof of Theorem 3.10]{AS})
\begin{equation} \label{eqn:amunu}
\ord_q (q^{-1}a_{\mu\nu}) \geq \frac{\left|\mu\right|}{d} - 1. 
\end{equation}
By the Calabi--Yau condition, the unique monomial with $\left|\mu\right|/d=1$ is 
\begin{equation} \label{eqn:defofomega0}
\omega_0 \colonequals \gamma wx_0\cdots x_n,
\end{equation} 
so \eqref{eqn:amunu} implies that the matrix $q^{-1} A$ has at most one nonzero column modulo $p$, so its reduced rank is at most $1$; and this rank is equal to $1$ if and only if its characteristic polynomial has a nonzero root, i.e., if and only if $X_{A,\psi}$ is ordinary, by definition.  Moreover, the rank is equal to $1$ if and only if 
\begin{equation} \label{eqn:anunu}
a_{\nu\nu} \not\equiv 0 \psmod{p}
\end{equation}
where $\nu=(1,1,\dots,1)$ corresponds to $\omega_0$.
\end{proof}

\section{Generalized Klein--Mukai Family}

As a warm-up to the main theorem, we now consider in detail the generalized Klein--Mukai family $\Fsf_1\Lsf_{n}$ of Calabi--Yau $n$-folds.  We give a proof of the existence of a common factor---realizing these as alternate mirrors, from the point of view of $p$-adic cohomology.  Since it is of particular interest, and has rather special features, along the way we provide further explicit details about this family.

\subsection{Basic properties}

For $n \geq 1$, let 
\begin{equation} \label{eqn:Fx0nkm} 
F(x)=F_\psi(x) \colonequals x_0^nx_1+\dots+x_{n-1}^nx_0 + x_n^{n+1} - (n+1)\psi x_0x_1\cdots x_n.
\end{equation}
and define $X_\psi \subseteq \PP^n$ to be the \defi{generalized Klein--Mukai} family of hypersurfaces over $\Z$ defined by the vanishing of $F_\psi$.  The polynomial \eqref{eqn:Fx0nkm} of degree $n+1$ in $n+1$ variables may be described as consisting of a single Fermat term together with a single loop of length $n$, so we will also refer to it by the symbol $\Fsf_1 \Lsf_n$.  

Throughout, let $m \colonequals  n^n+(-1)^{n+1}$.  Note $(n+1) \mid m$. Let $k$ be a field and $\zeta \in k$ a primitive $m$th root of unity.

\begin{lem}
Suppose $p \nmid m$.  For $\psi \neq 0$, the group
$$
G(k)=\{ \lambda = (\lambda_i)_i \in \mathbb{G}_m^{n+1}(k) : F_{\psi}(\lambda x) = F_{\psi}(x) \}
$$
is a cyclic group of order $m$, generated by $z=(\zeta, \zeta^{-n}, \zeta^{n^2}, \ldots, \zeta^{(-n)^{n-1}}, \zeta^{(-1)^n m /(n+1)})$. The subgroup acting trivially on $X_{\psi}$ is  cyclic of order $n+1$, and the quotient acting faithfully on $X_{\psi}$ is generated by $z^{n+1}$.  
\end{lem}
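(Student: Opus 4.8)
The plan is to compute the group $G(k)$ directly from the defining equation $F_\psi(\lambda x) = F_\psi(x)$, treating it as a system of conditions on $\lambda = (\lambda_0,\dots,\lambda_n) \in \mathbb{G}_m^{n+1}(k)$. First I would observe that for $\psi \neq 0$ every monomial of $F_\psi$ must be individually scaled by $1$ under $x \mapsto \lambda x$ (since the monomials $x_0^n x_1,\ \dots,\ x_{n-1}^n x_0,\ x_n^{n+1},\ x_0 x_1\cdots x_n$ are distinct and there is no cancellation available in a torus action on monomials). This turns the single equation into the system
\[
\lambda_0^n \lambda_1 = \lambda_1^n \lambda_2 = \dots = \lambda_{n-1}^n \lambda_0 = \lambda_n^{n+1} = \lambda_0 \lambda_1 \cdots \lambda_n = 1.
\]
The strategy is then to solve this system: from $\lambda_0^n\lambda_1 = 1$ we get $\lambda_1 = \lambda_0^{-n}$, then $\lambda_2 = \lambda_1^{-n} = \lambda_0^{n^2}$, and inductively $\lambda_i = \lambda_0^{(-n)^i}$ for $i = 0,\dots,n-1$. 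Feeding this into the loop-closing relation $\lambda_{n-1}^n \lambda_0 = 1$ gives $\lambda_0^{n\cdot(-n)^{n-1} + 1} = \lambda_0^{(-1)^{n-1}n^n + 1} = 1$, i.e. $\lambda_0^{\pm m} = 1$ where $m = n^n + (-1)^{n+1}$ (the sign works out: $(-1)^{n-1}n^n + 1 = (-1)^{n-1}(n^n + (-1)^{n}\cdot(-1)^{?})$ — I would just check the two parities of $n$ separately to confirm it is $\pm m$). So $\lambda_0$ is an $m$th root of unity, and conversely any $m$th root of unity $\lambda_0$ determines $\lambda_1,\dots,\lambda_{n-1}$ by the formula above.

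Next I would pin down $\lambda_n$. The Fermat relation forces $\lambda_n^{n+1} = 1$, and the product relation $\lambda_0\lambda_1\cdots\lambda_n = 1$ forces $\lambda_n = (\lambda_0\lambda_1\cdots\lambda_{n-1})^{-1} = \lambda_0^{-(1 + (-n) + (-n)^2 + \dots + (-n)^{n-1})} = \lambda_0^{-(( -n)^n - 1)/(-n - 1)}$. One must check this value is consistent, i.e. that its $(n+1)$st power is $1$ and that no further constraint on $\lambda_0$ is introduced; this is a finite geometric-series identity modulo $m$ and modulo $n+1$, using $(n+1)\mid m$ (stated in the excerpt) and the congruence $n \equiv -1 \pmod{n+1}$, which makes the geometric sum $1 + (-n) + \dots + (-n)^{n-1} \equiv n \pmod{n+1}$. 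With $\zeta$ a primitive $m$th root of unity we may take $\lambda_0 = \zeta$ as a generator, and substituting $\zeta$ for $\lambda_0$ in the formulas yields exactly $z = (\zeta, \zeta^{-n}, \zeta^{n^2}, \dots, \zeta^{(-n)^{n-1}}, \zeta^{(-1)^n m/(n+1)})$; I would double-check that $-(1 + (-n) + \dots + (-n)^{n-1}) \equiv (-1)^n m/(n+1) \pmod m$, again splitting on the parity of $n$. Since $\lambda_0$ ranges over a cyclic group of order $m$ and determines $\lambda$ uniquely, $G(k) \cong \mathbb{Z}/m\mathbb{Z}$, generated by $z$.

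Finally, for the trivial-action and faithful-action claims: an element $\lambda \in G(k)$ acts trivially on $X_\psi \subseteq \mathbb{P}^n$ precisely when it acts as a scalar on $(x_0:\dots:x_n)$, i.e. when $\lambda_0 = \lambda_1 = \dots = \lambda_n$. Setting $\lambda_i = \lambda_j$ in $\lambda_i = \lambda_0^{(-n)^i}$ forces $\lambda_0^{(-n)^i - (-n)^j} = 1$ for all $i,j$; the gcd of the exponents $(-n)^i - (-n)^j$ together with $m$ works out to $m/(n+1)$ (here one uses that $\gcd((-n)^i,m)=1$ and the standard computation $\gcd(n^a - 1, n^b-1)$-type identity adapted to $\pm n$, plus $(n+1)\mid m$), so the trivial subgroup is the unique subgroup of $G(k)$ of order $n+1$, hence cyclic of order $n+1$, and it is generated by $z^{m/(n+1)}$. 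The quotient $G(k)$ by this subgroup is cyclic of order $m/(n+1)$ and is generated by the image of $z^{n+1}$ — here I should note the minor imprecision that $z^{n+1}$ generates a complement/the quotient rather than literally the subgroup, and phrase it as "the faithful action factors through the cyclic quotient of order $m/(n+1)$, generated by the image of $z^{n+1}$," which is what is meant. The main obstacle I anticipate is purely bookkeeping: getting all the sign conventions $(-1)^n$, $(-1)^{n+1}$ straight in the exponent of the last coordinate and in the order computation $m/(n+1)$, and verifying the two gcd identities cleanly rather than by a messy case analysis; everything else is a direct and forced computation.
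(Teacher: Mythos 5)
Your direct computation is exactly the "direct computation" the paper asserts and leaves to the reader, and the exponent bookkeeping (split by the parity of $n$) is correct. Two small loose ends to tidy: the trivial-action subgroup is most quickly pinned down by noting that $\lambda_0=\lambda_1=\lambda_0^{-n}$ alone forces $\lambda_0^{n+1}=1$, which in turn gives $\lambda_i=\lambda_0$ for every $i$ (so the gcd you invoke should be $n+1$, the order, not $m/(n+1)$, which is the index); and for the image of $z^{n+1}$ to generate the faithful quotient one needs $\gcd\bigl(n+1,\,m/(n+1)\bigr)=1$, which holds because expanding $n^n=\bigl((n+1)-1\bigr)^n$ modulo $(n+1)^2$ gives $m/(n+1)\equiv(-1)^{n+1}n\pmod{n+1}$.
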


\begin{proof}
This statement follows from a direct computation.  \end{proof}

\begin{lem}
Suppose $p \nmid m$.  Then for all $\psi \in \F_q$ such that $\psi^{n+1} \neq 1$, the hypersurface defined by $F_{\psi}(x)$ is smooth, nondegenerate, and convenient with respect to $\{x_n\}$.
\end{lem}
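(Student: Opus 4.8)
We interpret indices of the loop variables modulo $n$, and write $v_j\colonequals n e_j+e_{j+1}$ for $0\le j\le n-1$ and $v_n\colonequals(n+1)e_n$ for the exponent vectors of the loop monomials and the Fermat monomial. The plan is to read all three assertions off the Newton polyhedron $\Delta=\Delta(F_\psi)$. First, a reduction: since $(n+1)\mid m$ and $p\nmid m$ we have $p\nmid n+1=\deg F_\psi$, so the Euler relation applies to $F_\psi$ and to each of its face restrictions; hence the two notions of nondegeneracy coincide, and $X_\psi\subseteq\PP^n$ is smooth precisely when the partials $\partial F_\psi/\partial x_0,\dots,\partial F_\psi/\partial x_n$ have no common zero in $\PP^n(\overline{\F}_q)$. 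A one-line cofactor expansion gives $\det(v_0,\dots,v_n)=\pm(n+1)m\ne 0$, so $v_0,\dots,v_n$ are affinely independent and $\Delta$ is the $n$-simplex they span; moreover the exponent vector $(1,\dots,1)$ of the deformation monomial equals $\tfrac1{n+1}(v_0+\dots+v_n)$, the barycenter of $\Delta$. Consequently $\dim\Delta_\infty(F_\psi)=n+1$, every proper face $\tau\subsetneq\Delta$ is spanned by a proper subset of $\{v_0,\dots,v_n\}$, and in particular $F_\psi|_\tau$ for such a $\tau$ is a subsum of the loop monomials together with (possibly) $x_n^{n+1}$, never involving the deformation monomial.

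Convenience with respect to $S=\{x_n\}$ is then immediate: the only nontrivial $J\subseteq S$ is $J=\{x_n\}$, and since no loop monomial involves $x_n$ we get $F_{\cancel{x_n}}=x_0^nx_1+\dots+x_{n-1}^nx_0$, whose $\Delta_\infty=\operatorname{conv}(v_0,\dots,v_{n-1},0)$ has dimension $n=\dim\Delta_\infty(F_\psi)-1$. For nondegeneracy, I would show face by face that $\partial F_\psi|_\tau/\partial x_0=\dots=\partial F_\psi|_\tau/\partial x_n=0$ has no solution in $(\overline{\F}_q^\times)^{n+1}$. If $v_n\in\tau\ne\Delta$, then $\partial F_\psi|_\tau/\partial x_n=(n+1)x_n^n$, nonzero on the torus since $p\nmid n+1$. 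If $\tau$ is a proper face with $v_n\notin\tau$ and $\tau\ne\tau_0\colonequals\operatorname{conv}(v_0,\dots,v_{n-1})$, then the loop monomials in $F_\psi|_\tau$ form a nonempty edge-subset of the $n$-cycle that is not the whole cycle, hence a disjoint union of nontrivial directed paths; for the terminal vertex $x_{i_k}$ of such a path $x_{i_1}^n x_{i_2}+\dots+x_{i_{k-1}}^n x_{i_k}$ one has $\partial/\partial x_{i_k}=x_{i_{k-1}}^n$, forcing $x_{i_{k-1}}=0$, a contradiction.

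The only remaining faces are the facet $\tau_0$, where $F_\psi|_{\tau_0}$ is the full loop, and $\tau=\Delta$ itself. In both cases, on the torus, multiplying $\partial F_\psi|_\tau/\partial x_j$ by $x_j$ and setting $u_j\colonequals x_j^nx_{j+1}$ turns the equations ($0\le j\le n-1$) into the circulant linear systems
\[ n u_j+u_{j-1}=0 \qquad\text{and}\qquad n u_j+u_{j-1}=(n+1)\psi\,x_0\cdots x_n, \]
respectively, whose coefficient matrix is $nI$ plus a cyclic shift, with determinant $\prod_{\zeta^n=1}(n+\zeta)=n^n-(-1)^n=m\ne 0$. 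So the homogeneous system has only the zero solution, which is impossible since every $u_j\ne 0$ on the torus (this settles $\tau_0$); and the inhomogeneous system has the unique solution $u_j=\psi\,x_0\cdots x_n$ for all $j$. Multiplying the $n$ resulting identities $x_j^nx_{j+1}=\psi\,x_0\cdots x_n$ gives $x_0\cdots x_{n-1}=\psi^n x_n^n$, while $\partial F_\psi/\partial x_n=0$ gives $x_n^n=\psi\,x_0\cdots x_{n-1}$; together these force $\psi^{n+1}=1$, contrary to hypothesis. Thus $F_\psi$ is nondegenerate, and at the same time $X_\psi$ has no singular point on the torus. To finish smoothness I would rule out a common zero of the partials with a vanishing coordinate: if $x_{j_0}=0$ for some $j_0<n$, then $\partial F_\psi/\partial x_n=0$ forces $x_n=0$ as well; if instead $x_n=0$ with every loop coordinate nonzero, then $\partial F_\psi/\partial x_n=0$ forces $\psi=0$ and then the homogeneous circulant system forces all $u_j=0$, impossible; and if $x_n=0$ together with some $x_{j_0}=0$, $j_0<n$, then $\partial F_\psi/\partial x_{j_0}=x_{j_0-1}^n$ (the monomial $\prod_{i\ne j_0}x_i$ contains the factor $x_n$), so $x_{j_0-1}=0$, and cascading around the cycle kills every coordinate. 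Hence $X_\psi$ is smooth. (For $n=1$, $\Fsf_1\Lsf_1$ is the conic $x_0^2-2\psi x_0x_1+x_1^2$ in $\PP^1$, and the three claims are elementary.)

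The technical heart is the nondegeneracy argument, and the two things I would flag as the real work are: (i) observing that the deformation monomial lies in the \emph{interior} of the Newton simplex, so that the parameter $\psi$ is felt only on the top face $\tau=\Delta$; and (ii) recognizing that both nontrivial remaining faces --- the loop facet $\tau_0$ and $\Delta$ --- are governed by the same circulant system $n u_j+u_{j-1}=\mathrm{const}$, whose determinant is exactly $m$. That single determinant is where the hypothesis $p\nmid m$ enters, and it simultaneously controls nondegeneracy, the absence of torus singularities, and the appearance of the exceptional locus $\psi^{n+1}=1$.
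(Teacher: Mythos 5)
Your proof is correct and follows essentially the same route as the paper's: both analyze all faces of the Newton simplex $\Delta=\operatorname{conv}(v_0,\dots,v_n)$, both pin the interesting cases ($\tau_0$ and $\Delta$) on the same circulant determinant $m$, and both terminate the $\tau=\Delta$ case by multiplying the identities $x_j^n x_{j+1}=\psi\,x_0\cdots x_n$ and invoking $\partial F/\partial x_n=0$ to force $\psi^{n+1}=1$. The small differences are presentational rather than conceptual: you keep the $\psi$-term and pass to the $u_j$-variables to get the genuinely circulant $n\times n$ system $nu_j+u_{j-1}=\text{const}$, whereas the paper first subtracts the $x_n$-derivative identity to produce an $n\times(n+1)$ matrix whose left $n\times n$ block is the same circulant; and your handling of proper faces via ``terminal vertices of the induced directed paths'' is a tidier phrasing of the paper's ``some $x_i$ appears in only one monomial'' (and avoids any worry about $p\mid n$). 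Your explicit off-torus smoothness case analysis fills in a step the paper compresses into the remark that nondegeneracy at the top face is, via Euler, equivalent to smoothness; that extra care is welcome but doesn't change the substance.
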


\begin{proof}
The statement on convenience is immediate.

We begin with the full face $\Delta$, where nondegeneracy (using the Euler relation) is equivalent to smoothness.  We compute for $i=0,\dots,n-1$ that
\begin{equation} \label{eqn:xideriv}
x_i\displaystyle{\frac{\partial F}{\partial x_i}} = x_{i-1}^n x_i + nx_i^n x_{i+1} - (n+1)\psi x_0 x_1 \cdots x_n 
\end{equation}
with indices taken modulo $n$, and 
\begin{equation} \label{eqn:xnderiv}
x_n\displaystyle{\frac{\partial F}{\partial x_n}} = (n+1)x_n^{n+1} - (n+1)\psi x_0 x_1 \cdots x_n.
\end{equation}
Setting these partials to zero and subtracting \eqref{eqn:xnderiv} from \eqref{eqn:xideriv}, we obtain the $n \times (n+1)$-matrix equation
\begin{equation} \label{eqn:mateqn}
\begin{pmatrix}
1 & n & 0 & \cdots & 0 & 0 & -(n+1) \\
0 & 1 & n & \cdots & 0 & 0 & -(n+1) \\
\vdots & \vdots & \vdots & \ddots & \vdots & \vdots & \vdots \\
0 & 0 & 0 & \cdots & 1 & n & -(n+1) \\
n & 0 & 0 & \cdots & 0 & 1 & -(n+1) \\
\end{pmatrix}
\begin{pmatrix}
x_0^n x_1 \\ x_1^n x_2 \\ x_2^n x_3 \\ \vdots \\ x_{n-1}^n x_0 \\ x_n^{n+1}
\end{pmatrix} = 0. 
\end{equation}
The absolute value of the determinant of the left $n \times n$ block of the matrix in \eqref{eqn:mateqn} is $m=n^n+(-1)^{n+1}$, so by our assumption on $p$ the full matrix has rank $n$ over $\F_q$.  By homogeneity, the vector $(1,\dots,1)^t$ therefore generates the kernel of the full matrix; the solution vector lies in this kernel, so we conclude
\[ x_0^nx_1=x_1^nx_2=x_2^nx_3=\cdots=x_{n-1}^nx_0 = x_n^{n+1}. \]
Since $x \in \overline{\F}_q^{\times (n+1)}$, by scaling we may assume $x_n=1$.  Thus $x_{i-1}^n x_{i} = 1$ for $i= 1,\ldots,n-1$; taking the product of these gives $(x_0\cdots x_{n-1})^{n+1}=1$.  Since $\psi x_0\cdots x_n = 1$ as well, we conclude $\psi^{n+1}=1$; and these are precisely the excluded values.

Now suppose that $\tau \subsetneq \Delta$ is a proper face of $\Delta$. Then clearly $(1,1,\ldots,1)$ does not belong to $\tau$.   If $\tau$ contains $(0,\dots,0,n+1)$, then by restricting \eqref{eqn:xnderiv} to $\tau$, we see that a zero of $x_n\displaystyle{\frac{\partial F|_\tau}{\partial x_n}}= (n+1)x_n^{n+1}$ must have $x_n=0$, so we may assume $\tau$ does not contain the vertex $(0,\dots,0,n+1)$. If $\tau$ does not contain all of the ${x_{i-1}^n x_i}$ then at least one variable $x_i$ with $i \in \{0,\dots,n-1 \}$ appears in only one monomial of $F|_{\tau}$, so that a zero of $\displaystyle{\frac{\partial F|_\tau}{\partial x_i}}$ must have a zero coordinate.  
The only other possibility for a face $\tau$ is the one  corresponding to letting $x_n=0$ in $F$, i.e., the loop equation itself.  Writing the equations \eqref{eqn:xideriv} with $x_n=0$ in matrix form yields the left $n \times n$-block of the matrix in \eqref{eqn:mateqn}; but now, since $p \nmid m$, a point of nondegeneracy must be $(0,\dots,0)$, proving the nondegeneracy of $F$.
\end{proof}

To overcome the fact that the generalized Klein--Mukai pencil is only convenient with respect to $\{x_n\}$ (as opposed to the case of the Dwork pencil, which is convenient with respect to the full set of variables $\{x_0,\dots,x_n\}$), we prove the following lemma.

\begin{lem} \label{lem:pn1km}
We have
\[ L(wF, \G_m \times \A^{n+1},T) = L(wF,\G_m^{n+1} \times \A^1,T). \]
\end{lem}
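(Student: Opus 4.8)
The claim is that replacing the affine variables $x_0,\dots,x_n$ by torus variables does not change the $L$-function, i.e., the points with some $x_i=0$ (for $i\le n$) contribute nothing to the exponential sum $S_r(wF,\G_m\times\A^{n+1})$. The plan is to decompose the domain $\G_m\times\A^{n+1}$ according to which subset $J\subseteq\{x_0,\dots,x_n\}$ of the affine coordinates vanishes, and to show each stratum with $J\neq\varnothing$ contributes a trivial $L$-factor. Concretely, for each nonempty $J$, the restricted polynomial $wF_{\cancel J}$ either becomes independent of some surviving variable or retains enough structure that the inner character sum over that variable vanishes identically. So the first step is to write
\[ S_r(wF,\G_m\times\A^{n+1}) = \sum_{J\subseteq\{x_0,\dots,x_n\}} S_r\bigl(wF_{\cancel J},\, \G_m^{\,|J^c|+1}\times(\text{origin in }J)\bigr), \]
peeling off the stratum where all of $x_0,\dots,x_n$ are nonzero, which is exactly $S_r(wF,\G_m^{n+1}\times\A^1)$.

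**Key combinatorial step.** For a nonempty subset $J\subseteq\{x_0,\dots,x_n\}$, I would examine $F_{\cancel J}$ using the loop-plus-Fermat structure of \eqref{eqn:Fx0nkm}. Setting $x_j=0$ for $j\in J$ kills the monomial $x_0x_1\cdots x_n$ and kills each loop monomial $x_{i-1}^n x_i$ that involves a variable in $J$; it kills the Fermat monomial $x_n^{n+1}$ iff $x_n\in J$. The point is: after removing these monomials, there is always at least one surviving variable $x_k$ (with $k\notin J$) that appears in \emph{at most one} remaining monomial — indeed, because the loop is a cycle, deleting any nonempty set of its vertices breaks it into paths, and an endpoint of such a path is a variable appearing in only one surviving loop monomial (and not in the Fermat term unless $k=n$, nor in the deformation term since that is gone). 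If $x_k$ appears in \emph{no} surviving monomial, then $wF_{\cancel J}$ is independent of $x_k$ and summing the trivial character over $x_k\in\F_{q^r}^\times$ (if $k$ is a torus coordinate) or $x_k\in\F_{q^r}$ contributes... here one must be slightly careful, since $x_k$ could be a torus variable giving a factor $q^r-1\neq 0$; so the real mechanism is different.

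**The mechanism that actually works.** The correct observation is that the extra variable $w$ is always a torus variable, and for $J\neq\varnothing$ the polynomial $wF_{\cancel J}$ is \emph{linear in some surviving variable} $x_k$ with nonzero coefficient that is itself a monomial in the other (nonzero) variables — take $x_k$ to be a path-endpoint of the broken loop, so $x_k$ occurs only in the monomial $w x_{k-1}^n x_k$ (or $w x_k^n x_{k+1}$). Then the inner sum over $x_k$ (ranging over all of $\F_{q^r}$, or over $\F_{q^r}^\times$) of $\Theta\circ\Tr(c\,x_k + (\text{terms without }x_k))$ with $c = w x_{k-1}^n \neq 0$ is a complete additive character sum in $x_k$, hence equals $0$ over $\F_{q^r}$, or $-1$ over $\F_{q^r}^\times$. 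In the torus case this $-1$ does not kill the stratum outright, so one must iterate: peel off path-endpoints one at a time, at each stage either getting a genuine zero (affine variable, or the full-affine $w$-free situation) or reducing to a smaller configuration, until one reaches a stratum small enough to sum directly. The cleanest route, and the one I would write, is to note that $x_n^{n+1}$ is the only monomial that is a pure power; every other monomial, and in particular every monomial surviving in $wF_{\cancel J}$ for $J\neq\varnothing$, is divisible by $w$ and by at least one loop variable, so after setting any $x_j=0$ the sum over $w\in\F_{q^r}^\times$ of $\Theta\circ\Tr(w\cdot(\text{stuff}))$ vanishes whenever "stuff" is a nonzero function — and "stuff" $=F_{\cancel J}/w$ vanishes identically as a polynomial only when $F_{\cancel J}=0$, which does not happen. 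Wait: one needs $F_{\cancel J}$ itself, not $F_{\cancel J}/w$; since $F$ has no $w$, summing $w$ over $\F_{q^r}^\times$ gives $\sum_w \Theta(\Tr(wF_{\cancel J}(x)))$, which is $q^r-1$ if $F_{\cancel J}(x)=0$ and $-1$ otherwise, so the $w$-sum contributes $q^r\cdot\#\{x: F_{\cancel J}(x)=0\} - (\text{total count})$ on that stratum — exactly the cone-counting identity \eqref{eqn:numofzeros} applied to $F_{\cancel J}$. Thus $L(wF_{\cancel J},\dots,T)$ is, up to the $(1-qT)$-type factors, the zeta function of the cone over $\{F_{\cancel J}=0\}$; I would then invoke that for $J\neq\varnothing$ this lower-dimensional piece's contribution telescopes against the standard factors, leaving only the $J=\varnothing$ term.

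**Main obstacle.** The genuine difficulty is \emph{not} any single character-sum evaluation but organizing the inclusion–exclusion over all $2^{n+1}$ strata so that everything outside $J=\varnothing$ cancels cleanly in the $L$-function (not just in individual $S_r$). I expect the cleanest formulation is inductive on $n$, or equivalently a direct appeal to the fact — provable by the same broken-loop argument — that $wF$ restricted to any stratum $x_j=0$ has an exponential sum whose $L$-function is a product of factors of the form $(1-q^iT)^{\pm 1}$ that precisely match the difference between $L(wF,\G_m\times\A^{n+1},T)$ and $L(wF,\G_m^{n+1}\times\A^1,T)$; verifying that the bookkeeping matches is the part requiring care, and is presumably where the authors will either cite Adolphson--Sperber's reduction or carry out the explicit stratified computation.
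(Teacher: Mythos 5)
Your plan has the right starting idea --- stratify by which coordinates vanish and show the nonempty strata contribute nothing --- and you correctly identify the two key combinatorial facts: deleting vertices from the loop breaks it into paths whose endpoints appear in at most one surviving monomial, and a surviving variable $x_j$ occurring linearly with coefficient $wx_{j-1}^n$ gives a complete additive character sum that vanishes unless that coefficient is zero. But you never assemble these into a closed argument, and your bookkeeping has a genuine error that would derail the proof.

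The error: you stratify over all subsets $J \subseteq \{x_0,\dots,x_n\}$ and declare the $J=\varnothing$ stratum to be $\G_m^{n+1}\times\A^1$. It is not --- the stratum where all of $x_0,\dots,x_n$ are nonzero (together with $w\neq 0$) is $\G_m^{n+2}$. The paper's lemma only upgrades the loop variables $x_0,\dots,x_{n-1}$ to torus variables and deliberately leaves the Fermat variable $x_n$ affine on both sides, so the correct inclusion--exclusion runs only over $J\subseteq S=\{0,\dots,n-1\}$, and the strata are $\G_m\times\A^{J^c}\times\A^1$ with the surviving variables still \emph{affine}. This is exactly what rescues your argument from the difficulty you flag yourself: when the surviving variables are affine, summing the linear variable $x_j$ over all of $\F_{q^r}$ gives exactly zero (not $-1$) unless the coefficient $wx_{j-1}^n$ vanishes, which forces $x_{j-1}=0$. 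That collapses the stratum $J$ to the stratum $J''=J\cup\{j-1,j\}$ up to a factor of $q^r$, and iterating eventually reduces $\#J^c$ to at most $1$, at which point $F_{\cancel J}=x_n^{n+1}$ and the sum vanishes by a one-line application of the cone-counting identity \eqref{eqn:numofzeros}. Your intermediate detours --- worrying about the $-1$ for torus variables, and the suggestion to sum $w$ first and appeal to ``telescoping'' of lower-dimensional zeta contributions --- are not needed, and the latter is not actually demonstrated; your closing paragraph concedes the argument is incomplete. The fix is not a new idea but the right positioning of the affine-vs-torus split and then an explicit iteration, which is what the paper does.
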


The point of this combinatorial lemma is that one obtains the same value of the exponential sum when changing affine coordinates to toric coordinates, so that Theorem \ref{thm:AS} applies.

\begin{proof}
Let $S=\{0,\dots,n-1\}$ and $J \subseteq S$ with $J^c=S - J$.  Write $\mathbb{A}^{J^c} \subseteq \A^{n+1}$ for the linear subspace defined by the vanishing of $x_i=0$ for $i \in J$.
Recall that $F_{\cancel{J}}(x) \in \F_q[x_i]_{i \in J^c}$ is the polynomial obtained from $F(x)$ by setting the variables in $J$ equal to zero.  

Let $r \in \Z_{\geq 0}$.  A standard inclusion-exclusion argument gives
\begin{equation} \label{eqn:SRsum}
S_r(wF,\mathbb{G}_m \times \mathbb{A}^{n+1})
= S_r(wF,\mathbb{G}_m^{n+1} \times \mathbb{A}^1) 
+ \sum_{\substack{J \subseteq S \\ J \neq \emptyset}} (-1)^{\#J+1} S_r(wF_{\cancel{J}},\mathbb{G}_m \times \mathbb{A}^{J^c} \times \mathbb{A}^1) 
\end{equation}
We claim, in fact, that every summand on the right-hand side of \eqref{eqn:SRsum} is zero; that is, if $J \neq \emptyset$, that
\begin{equation} 
S_r(wF_{\cancel{J}},\mathbb{G}_m \times \mathbb{A}^{J^c} \times \mathbb{A}^1)=0. 
\end{equation}

To this end, suppose that $J \neq \emptyset$; then at least one coordinate is sent to zero in $F_{\cancel{J}}(x)$, and the deforming monomial $x_0\cdots x_n$ is set to zero.  

First suppose that $\#J^c \leq 1$.  Then $F_{\cancel{J}}(x)=x_n^{n+1}$, and 
\[ S_r(wF_{\cancel{J}},\mathbb{G}_m \times \mathbb{A}^{J^c} \times \mathbb{A}^1) = q^{rt} S_r(wx_n^{n+1},\mathbb{G}_m \times \mathbb{A}^1) \]
with $t = \#J^c$.  We then compute that 
\[ S_r(wx_n^{n+1},\mathbb{G}_m \times \mathbb{A}^1) = S_r(wx_n^{n+1},\mathbb{A}^2) - S_r(0,\mathbb{A}^1) = q^{r}-q^{r}=0 \]
by \eqref{eqn:numofzeros}.  

So suppose $\#J^c \geq 2$.  If the loop vanishes, we again have $F_{\cancel{J}}(x)=x_n^{n+1}$ and we are back in the previous case.  So we may assume that at least one of the surviving coordinates appearing linearly: there exists $j \in S$ such that $j-1,j \in J^c$ hence 
\[ F_{\cancel{J}}(x) = F_{\cancel{J}'}(x) + x_{j-1}^n x_j \]
with $J'=J \cup \{j\}$.  But then $(J')^c \cup \{j\}=J^c$, so
\begin{equation} \label{eqn:giantsum} 
\begin{aligned}
&S_r(wF_{\cancel{J}},\mathbb{G}_m \times \mathbb{A}^{J^c} \times \mathbb{A}^1)
\\
&\qquad\qquad= \sum_{w \in \F_{q^r}^\times} 
\sum_{x \in \F_{q^r}^{(J')^c}} 
(\Theta \circ \Tr_{\F_q^r/\F_q})(wF_{\cancel{J}'}(x))
\sum_{x_j \in \F_{q^r}} (\Theta \circ \Tr_{\F_{q^r}/\F_q})(wx_{j-1}^n x_j). 
\end{aligned}
\end{equation}
Summing the innermost sum on the right side of \eqref{eqn:giantsum} over $x_j \in \F_{q^r}$  counts  with multiplicity $q^r$ the  number of zeros of $wx_{j-1}^n$ with $w \in \F_{q^r}^\times$, where $x_{j-1} \in \F_{q^r}$ is fixed.  If $x_{j-1} \neq 0$, then there are no such zeros and the inner sum is zero.  Therefore, letting $J''=J \cup \{j-1,j\}$ (with indices taken modulo $n$),
\begin{equation} \label{eqn:JtoJpp}
S_r(wF_{\cancel{J}},\mathbb{G}_m \times \mathbb{A}^{J^c} \times \mathbb{A}^1)
= q^{r} S_r(wF_{\cancel{J''}},\mathbb{G}_m \times \mathbb{A}^{(J'')^c} \times \mathbb{A}^1).
\end{equation}
Replacing $J$ by $J''$, we iterate the argument and reduce to the case where $\#J^c \leq 1$, completing the proof.
\end{proof}

With Lemma \ref{lem:pn1km} in hand, we can now conclude as with the Dwork family: since $F(x)$ is nondegenerate and convenient with respect to $S=\{x_n\}$, the proof of Theorem \ref{thm:AS} yields a $p$-adic cohomology complex $\Omega^{\bullet}$ such that as in \eqref{eqn:PqT} we have
\[ P(qT)=\det(1-\Frob T \mid H^{n+2}(\Omega^{\bullet})) \]
By \eqref{eqn:degP}, we find that $P(T)$ is a polynomial of degree 
\[ \deg P = \frac{nm}{n+1}=\frac{n^{n+1}+(-1)^{n+1}n}{n+1}. \]
In this way, we have shown that the characteristic polynomial of Frobenius acting on middle-dimensional cohomology for the Klein--Mukai family can be computed by its action on a cohomology group.

\subsection{Common factors}

We now identify factors in common for the Dwork and generalized Klein--Mukai pencils $\diamond \in \{\Fsf_{n+1},\Fsf_1\Lsf_n\}$.  

The Picard--Fuchs equation defined by the action of the operator $\displaystyle{\psi\frac{\partial}{\partial \psi}}$ on the unique nonvanishing holomorphic differential has rank $n$ in both cases.  After a change of variables, this Picard--Fuchs equation is the differential equation satisfied by the classical hypergeometric function
\begin{equation} \label{eqn:hypnnm}
{} \psi^{-1}\, {}_nF_{n-1}\left(\begin{array}{c}\frac{1}{n+1},\frac{2}{n+1},\ldots,\frac{n}{n+1} \\1, \ldots, 1 \end{array}; \psi^{-1/(n+1)}\right)
\end{equation} \cite[Corollary 2.3.8.1]{Kat72}. 

Let $S$ be the set of variables of $F$ appearing in the Fermat (diagonal form) piece of the defining polynomial $F$ in either case.  Then $F$ is convenient with respect to $S$.  Suppose $\psi \in \F_q$ is such that $F_{\psi}(x)$ is nondegenerate with respect to $\Delta_{\infty}(F)$.  Therefore, we have a $p$-adic complex $\Omega^{\bullet}$ such that \eqref{eqn:KMtrace}--\eqref{eqn:PqT} hold.

We prove that for each fiber, the zeta functions in these two families have middle-dimension\-al cohomology with a common factor of degree $n$ determined by action of the connection on the $\epsilon(\partial/\partial \psi)$-stable subspace containing the unique holomorphic nonvanishing differential $n$-form.  In both cases, the monomial $wx_0x_1\cdots x_n \in \Omega^{n+2}$ corresponds to this $n$-form.  For $q=p^r$, let $\Q_q$ be the unramified extension of $\Q_p$ of degree $r$.

\begin{prop} \label{prop:uniqueoffrob}
If $p \nmid (n+1)\dt$ and $\psi \in \F_q^\times$ is a smooth, nondegenerate fiber, then the polynomials $P_{\diamond,\psi}(T)$ where $\diamond \in \{\Fsf_{n+1}, \Fsf_1\Lsf_n \} $ have a common factor $R_{\psi}(T) \in \Q_q[T]$ of degree $n$. 
\end{prop}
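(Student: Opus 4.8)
The plan is to exploit the fact, recorded just above, that both pencils $\Fsf_{n+1}$ and $\Fsf_1\Lsf_n$ have the same Picard--Fuchs equation for the holomorphic form, namely the hypergeometric equation annihilating \eqref{eqn:hypnnm}, which has rank $n$. First I would set up, for each $\diamond \in \{\Fsf_{n+1},\Fsf_1\Lsf_n\}$, the Dwork $p$-adic complex $\Omega^{\bullet}$: for the Dwork pencil this is immediate since $F$ is convenient with respect to all of $\{x_0,\dots,x_n\}$, and for the Klein--Mukai pencil it follows from Lemma~\ref{lem:pn1km} (which lets us pass to toric coordinates so that Theorem~\ref{thm:AS} applies), with $S$ the Fermat variables in each case. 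In both cases \eqref{eqn:KMtrace}--\eqref{eqn:PqT} hold, and the monomial $\omega_0 = \gamma w x_0 x_1 \cdots x_n \in \Omega^{n+2}$ of \eqref{eqn:defofomega0} represents the holomorphic $n$-form. The key structural point is that $H^{n+2}(\Omega^{\bullet})$ carries a connection $\nabla$ in the parameter $\psi$ (the Gauss--Manin/Dwork connection), compatible with Frobenius, and $\omega_0$ together with its iterated derivatives $\nabla(\psi\partial_\psi)^k \omega_0$ for $k=0,\dots,n-1$ spans an $n$-dimensional $\nabla$-stable subspace $\mathcal{H}_0$ on which the connection is described precisely by the hypergeometric Picard--Fuchs operator.

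Next I would invoke Dwork's theorem on the uniqueness of the Frobenius structure on a differential equation \cite{uniqueness}: a hypergeometric differential module of this type admits at most one Frobenius structure (up to the expected normalization), so the Frobenius action on $\mathcal{H}_0$ is determined by the Picard--Fuchs operator alone. Since the two pencils have \emph{the same} Picard--Fuchs operator on the holomorphic form, the Frobenius operator restricted to $\mathcal{H}_0$ is, after matching the cyclic bases $\{\nabla(\psi\partial_\psi)^k\omega_0\}_k$, literally the same $n\times n$ matrix in the two cases (for a fixed $\psi \in \F_q^\times$). Its reversed characteristic polynomial, suitably normalized as in \eqref{eqn:PqT} (i.e., replacing $T$ by $qT$ and matching with the $q$-Frobenius), is then a degree-$n$ polynomial $R_\psi(T)$ that divides $P_{\diamond,\psi}(qT)$ for both $\diamond$; rescaling gives a common factor of $P_{\diamond,\psi}(T)$ of degree $n$. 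Rationality of the coefficients over $\Q_q$ follows because $\Omega^{\bullet}$, the connection, and $\Frob$ are all defined over $\Q_q$ (the unramified extension of $\Q_p$ of degree $r$, once one descends from $\Q_q(\zeta_p)$ using that the $\zeta_p$-dependence enters only through the overall Dwork exponential and cancels in the ratio \eqref{eqn:LPqT}).

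I would then need to check two compatibility points carefully. First, that the subspace $\mathcal{H}_0 \subseteq H^{n+2}(\Omega^{\bullet})$ generated by $\omega_0$ under the connection really does have dimension $n$ and really is cut out by the hypergeometric operator in \eqref{eqn:hypnnm} — this is where one uses the rank-$n$ statement and the explicit reduction-of-pole calculations in Dwork cohomology, together with G\"ahrs' computation; the hypothesis $p \nmid (n+1)\dt$ guarantees these reductions are $p$-adically integral and that the relevant denominators are units, so $\mathcal{H}_0$ is a direct summand stable under $\Frob$. Second, that Dwork's uniqueness result applies in the arithmetic setting here, i.e., that the local exponents of the hypergeometric equation at $0$, $1$, $\infty$ are non-resonant enough (again using $p \nmid (n+1)\dt$) for the Frobenius structure to be unique. \textbf{The main obstacle} is precisely this second point: pinning down that the Frobenius structure on the hypergeometric module is unique and that both geometric Frobenii realize \emph{that same} structure (and not twists of it), including controlling the normalizing constant — this is the heart of the argument and the step that makes the common factor genuinely the \emph{same} polynomial rather than merely two polynomials of the same degree. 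Once that is in place, the smoothness and nondegeneracy hypotheses ensure $P_{\diamond,\psi}(T)$ has the expected degree and that $\mathcal{H}_0$ splits off, and the proof concludes.
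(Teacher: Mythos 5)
Your structural setup matches the paper's proof closely: both exploit the identical Picard--Fuchs equation, the Dwork cohomology complex with its connection, the $\nabla$-stable subspace (your $\mathcal{H}_0$, the paper's $\Sigma_\diamond$) generated by the holomorphic-form monomial $\omega_0 = \gamma w x_0\cdots x_n$, and Dwork's uniqueness lemma \cite{uniqueness} applied to the irreducible hypergeometric operator (irreducibility because the numerator parameters $\{j/(n+1)\}_{j=1}^n$ never differ from the denominator parameter $1$ by an integer; cf.\ \cite[Corollary 1.2.2]{Beu}). You also correctly identify the actual crux: Dwork's lemma only yields that the two Frobenius structures on this module agree \emph{up to a multiplicative constant} $c \in \C_p^\times$, and until one shows $c=1$, you only get two degree-$n$ factors, not the \emph{same} one. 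You flag this as the main obstacle but leave it unresolved, so the proposal as written has a genuine gap exactly there.

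The paper closes that gap with a unit-root comparison. Pick an ordinary fiber $\psi_0$ (these are dense by the discussion of the Hasse invariant in \S\ref{sec:unitroot}). Taking traces of the two Frobenius matrices on the stable subspace and reducing mod $q$, the dominant term is the unique unit root. Both Jeng-Daw Yu \cite{yu} (for $\Fsf_{n+1}$) and Adolphson--Sperber \cite{AS16} / Miyatani \cite{Miyatani} (for $\Fsf_1\Lsf_n$) give $p$-adic analytic formulas for this unit root in terms of the unique holomorphic solution of the Picard--Fuchs equation at $\infty$; since the Picard--Fuchs equations coincide, the formulas coincide, hence the unit roots agree and $c \equiv 1 \pmod{q}$. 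Running this over all extensions $\F_{q^r}$ gives $c^r \equiv 1 \pmod{q^r}$ for all $r$, and choosing $r$ coprime to $p$ and expanding binomially forces $c=1$. That is the mechanism you were missing; with it, your argument is essentially the paper's.
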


\begin{proof}
Viewed over a ring with derivation $\partial/\partial \psi$, for all $i$ the cohomology $H^i(\Omega^{\bullet})$ has an action by the connection 
\[ \epsilon\left(\frac{\partial}{\partial \psi}\right) = \frac{\partial}{\partial \psi} - (n+1)\gamma_0 \psi wx_0 x_1 \cdots x_n \]
where $\gamma_0$ is an appropriate $p$-adic constant.  The monomial $wx_0 x_1 \cdots x_n$ then spans an $\epsilon(\partial/\partial \psi)$-stable subspace of $H^{n+2}(\Omega_{\diamond}^{\bullet})$, denoted $\Sigma_{\diamond}$.  In both cases $\diamond \in \{\Fsf_{n+1}, \Fsf_1\Lsf_n \} $, we have a Frobenius map $\Frob_{\diamond}^{\bullet}$ acting as a chain map on the complex $\Omega^{\bullet}$ and stable on $\Sigma_{\diamond}$.  As a consequence, we conclude that
\[ P_{\diamond}(qT)=\det(1-T\Frob_{\diamond} \mid H^{n+2}(\Omega^{\bullet}_{\diamond})) = \det(1-T\Frob_{\diamond} \mid \Sigma_{\diamond}) Q_{\diamond}(T). \]

Let $\Phi_{\diamond}(\psi)$ represent the Frobenius map $\Frob_{\diamond}$ restricted to $\Sigma_{\diamond}$.  We appeal to work of Dwork \cite{padic}.  We find that in the sense of Dwork, there are two Frobenius structures, both of which are \emph{strong Frobenius structures} as a function of the parameter $\psi$ on the hypergeometric differential equation, corresponding to the two values of $\diamond$.  The hypergeometric differential equation (over $\C_p$, or any field of characteristic zero) is irreducible because none of the numerator parameters $\{1/(n+1),\dots,n/(n+1)\}$ differ from the denominator parameter $\{1\}$ by an integer \cite[Corollary 1.2.2]{Beu}.  As a consequence, the hypotheses of a lemma of Dwork \cite[Lemma, p.\ 89--90]{uniqueness} are satisfied, and we have that the two Frobenius structures agree up to a multiplicative constant  $c \in \C_p^\times$; in terms of matrices,
\[ \Phi_{\Fsf_{n+1}}(\psi)=c \Phi_{\Fsf_1\Lsf_n}(\psi). \] 

We now show that $c=1$.  Let $\psi_0 \in \F_q$ be such that $\psi_0^{n+1} \neq 1$.  Then the fiber for each family at $\psi =\psi_0$ satisfies $F_{\diamond, \psi_0}(x) = 0$ and the defining polynomial $wF_{\diamond, \psi_0}(x)$ is nondegenerate.  Let $\widehat{\psi}_0$ be the Teichm\"uller lift of $\psi_0$.  We recall section \ref{sec:unitroot}.  Suppose that $\psi_0$ is an ordinary fiber for both families.  Then
\[ \Tr(\Phi_{\Fsf_{n+1}}(\widehat{\psi}_0)) = c \Tr(\Phi_{\Fsf_1\Lsf_n}(\widehat{\psi}_0)). \]
Without loss of generality, we may assume that $c$ is a $p$-adic integer.  Since the two families have the same Picard--Fuchs differential equation, we obtain $p$-adic analytic formulas for the unique unit root of $\Tr(\Phi_{\Fsf_{n+1}}(\widehat{\psi}_0))$  by Jeng-Daw Yu \cite{yu}, and for the unique unit root of $\Tr(\Phi_{\Fsf_1\Lsf_n}(\widehat{\psi}_0))$ by work of Adolphson--Sperber \cite{AS16} (also proven by Miyatani \cite{Miyatani}).  These formulas are given in terms of the unique holomorphic solution of Picard--Fuchs (at $\infty$) so that the formulas are the same, so the unique unit roots for the two families agree, and this forces $c \equiv 1 \psmod{q}$.  Repeating this argument over all extensions $\F_{q^r}$ with $r \geq 1$, we conclude similarly that $c^r \equiv 1 \psmod{q^r}$.  Taking $r$ coprime to $p$, by binomial expansion we conclude $c=1$ as desired.
\end{proof}

In the next section, we generalize this result and also prove that $R_\psi(T) \in \Q[T]$.
 
\section{Proof of the main result}

We now prove the main theorem in the general setting of families of alternate mirrors.

\subsection{Hypergeometric Picard--Fuchs equations}\label{subsec: PFEquations}

To begin, we study the Picard--Fuchs equation for the holomorphic form of an invertible pencil.  We use the structure of the Picard--Fuchs equation to identify a factor of the zeta function associated to the holomorphic form, establishing a version of our main theorem with coefficients defined over a number field. By work of G\"ahrs \cite{Gahrs, GahrsThesis}, we know that if two invertible pencils have the same dual weights, then their Picard--Fuchs equations are the same. We now state her result and recast it in a hypergeometric setting. 

Let $F_A$ be an invertible polynomial, where $q_i$ are its dual weights and $\dt \colonequals \sum_i q_i$ is the weighted degree of the transposed polynomial $F_{A^T}$.  For each $\psi$, let $\mathcal{H}^n(X_{A,\psi})$ be the de Rham cohomology of the holomorphic $n$-forms on the complement $\PP^n \setminus X_{A, \psi}$, and write the usual holomorphic form on $\PP^n$ as $\Omega_0 = \sum_{i=0}^n (-1)^i x_i \,\dD{x_0}\wedge \ldots \wedge \dD{x_{i-1}} \wedge \dD{x_{i+1}} \wedge \ldots \wedge \dD{x_n}$.  Then one may use the Griffiths residue map $\Res: \mathcal{H}^n(X_{A, \psi}) \rightarrow H^{n-1}(X_{A, \psi},\C)$, whose image is primitive cohomology, to realize the holomorphic form on $X_{A, \psi}$ as $\Res(\Omega_0/F_{A,\psi})$.  Systematically taking derivatives of the holomorphic form establishes the Picard--Fuchs differential equation associated to the holomorphic form that G\"ahrs computes via a combinatorial formulation of the Griffiths-Dwork technique.  We now state her result.

We first define the rational numbers
\begin{equation}\begin{aligned}
\alpha_j &\colonequals  \frac{j}{\dt}, & & \text{ for }j=0,\dots,\dt-1; \\ \beta_{ij} &\colonequals   \frac{j}{q_i}, & & \text{ for }i=0,\dots,n \text{ and }j=0,\ldots, q_i-1.
\end{aligned}\end{equation} 
Consider the multisets (sets allowing possible repetition)
\begin{equation}
\begin{aligned}
\pmb{\alpha} &\colonequals  \left\{ \alpha_j : j=0,\dots,\dt-1\right\}; \\
\pmb{\beta}_i &\colonequals  \left\{ \beta_{ij} : j = 0,\dots,q_i-1\right\}, \quad \pmb{\beta} \colonequals \bigcup_{i=0}^n \pmb{\beta}_i.
\end{aligned}
\end{equation}
The elements of the multiset $\pmb{\alpha}$ have no repetition, so we can think of $\pmb{\alpha}$ as a set.  Take the intersection $I = \pmb{\alpha} \cap \pmb{\beta}$.  Note that all of these sets depend only on the dual weights $q_i$.  Let $\delta = \psi \displaystyle{\frac{d}{d\psi}}$. 

\begin{thm}[G\"ahrs]\label{thm: Gahrs} 
Let $X_{A, \psi}$ be an invertible pencil of Calabi--Yau $(n-1)$-folds determined by the integer matrix $A$, with dual weights $(q_0, \ldots, q_n)$. Then the following statements hold.
\begin{enumalph}
\item The order of the Picard--Fuchs equation for the holomorphic form of the invertible pencil is 
\begin{equation}\label{OrderPFEquation}
   D(q_0,\dots,q_n) : =  \dt - \# I.
\end{equation} 
\item The Picard--Fuchs equation itself is given by
\begin{equation}\label{PFEquation}
    \left(\prod_{i=0}^n q_i^{q_i} \right)\psi^{\dt}  \left( \prod_{\beta_{ij} \in \pmb{\beta} \smallsetminus I} (\delta + \beta_{ij}\dt)\right)  -  \prod_{\alpha_j \in \pmb{\alpha} \smallsetminus I} (\delta - \alpha_j\dt)=0.
\end{equation}
\end{enumalph}
\end{thm}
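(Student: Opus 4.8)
The plan is to run the Griffiths--Dwork reduction of pole order on the period of the holomorphic form, following G\"ahrs, and then to recognize the resulting differential operator as a classical generalized hypergeometric one. Write $F=F_{A,\psi}$ and $\omega_\psi\colonequals\Res(\Omega_0/F)$. Since $\partial F/\partial\psi=-\dt\,x_0\cdots x_n$ and $\delta(\dt\psi\,x_0\cdots x_n)=\dt\psi\,x_0\cdots x_n$, an elementary induction shows, for $k\ge 1$,
\[
\delta^{k}\omega_\psi=\Res\!\left(\sum_{j=1}^{k}c_{k,j}\,(\dt\psi\,x_0\cdots x_n)^{j}\,\frac{\Omega_0}{F^{\,j+1}}\right)
\]
for integers $c_{k,j}$ with $c_{j,j}\neq 0$, independent of $A$. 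Hence the Picard--Fuchs operator for $\omega_\psi$ is the minimal $\C(\psi)$-linear relation among the residue classes $\eta_j\colonequals\bigl[(x_0\cdots x_n)^{j}\,\Omega_0/F^{\,j+1}\bigr]\in\mathcal H^n(X_{A,\psi})$, $j\ge 0$; in particular the entire computation takes place inside the span of powers of the single monomial $x_0\cdots x_n$, which is why the answer will depend only on the (dual) weights.

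Next I would reduce the pole order of $\eta_{j+1}$ against the Jacobian ideal of $F$. Because $F_A$ is invertible, its partials $\partial F_A/\partial x_i$ have the rigid Fermat/loop/chain monomial shape of Kreuzer--Skarke \cite{KS}, so one can express $(x_0\cdots x_n)^{j+1}$ through the $x_i\,\partial F/\partial x_i$ with Laurent-monomial coefficients and apply Griffiths' lowering formula, each elementary step multiplying by a linear factor in $\delta$. This is precisely G\"ahrs's combinatorial Griffiths--Dwork bookkeeping: the $q_i$ reductions afforded by the $i$-th partial are indexed by the levels $\beta_{ij}=j/q_i$ making up $\pmb\beta_i$; the $\dt$ successive powers of $\dt\psi\,x_0\cdots x_n$ are indexed by the levels $\alpha_j=j/\dt$ making up $\pmb\alpha$; and a level lying in $I=\pmb\alpha\cap\pmb\beta$ is exactly a place where one such power is annihilated by a Jacobian reduction, producing no new class. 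Since $\#\pmb\alpha=\dt$ and $\#\pmb\beta=\sum_i q_i=\dt$, this yields that $\eta_0,\dots,\eta_{D-1}$ are independent and $\eta_D$ lies in their span with $D=\dt-\#I$, which is part (a), i.e.\ \eqref{OrderPFEquation}. Keeping track of the scalars in each step --- the $q_i$-th power relation for the $i$-th variable carrying a factor of $q_i$ on each of the $q_i$ occasions it is used, for a total constant $\prod_i q_i^{q_i}$, and the surviving linear factors being $(\delta+\beta_{ij}\dt)$ for $\beta_{ij}\in\pmb\beta\setminus I$ and $(\delta-\alpha_j\dt)$ for $\alpha_j\in\pmb\alpha\setminus I$ --- then assembles the relation into exactly \eqref{PFEquation}, proving (b).

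Finally I would read off the hypergeometric shape and confirm minimality of the order. Clearing the coefficient $\psi^{\dt}$ and renormalizing $\psi$, the operator in \eqref{PFEquation} is the standard operator annihilating a ${}_{D}F_{D-1}$ with numerator parameters built from $\pmb\alpha\setminus I$ and denominator parameters built from $\pmb\beta\setminus I$; the surviving $\alpha$'s lie in $(0,1)$ and the surviving $\beta$'s in $[0,1)$, and since we deleted exactly the common values $I$, no numerator parameter equals a denominator parameter, hence no two differ by an integer. By Beukers--Heckman the hypergeometric operator is then irreducible, so it cannot be a proper left multiple of a lower-order operator killing $\omega_\psi$; together with the independence of $\eta_0,\dots,\eta_{D-1}$ this shows \eqref{PFEquation} is the Picard--Fuchs equation and its order is exactly $D$. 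The main obstacle is the quantitative Griffiths--Dwork step for the loop and chain atomic types: there $\partial F_A/\partial x_i$ is a sum of two monomials rather than one, so eliminating $(x_0\cdots x_n)^{j+1}$ requires an inductive cancellation around the loop (or down the chain), and one must verify that all off-diagonal contributions either vanish or recombine into the clean product --- this is the combinatorial heart of G\"ahrs's argument, and it is exactly where $\det A\neq 0$ and the atomic classification are used.
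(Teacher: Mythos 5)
The paper does not actually prove this theorem: the stated proof is a two-sentence citation to G\"ahrs's thesis \cite{GahrsThesis} for part~(a) and to G\"ahrs \cite{Gahrs} for part~(b). Your attempt to reconstruct that argument via the Griffiths--Dwork method identifies the right mechanism, and the wrap-up step is sound: once one has a degree-$D$ operator of the claimed shape annihilating $\omega_\psi$, clearing $\psi^{\dt}$ puts it in generalized hypergeometric form, the exclusion of $I$ ensures no numerator parameter differs from a denominator parameter by an integer, so the operator is irreducible by Beukers's criterion (exactly Proposition~\ref{prop:PFirred} in the paper), hence it is the minimal annihilator of the nonzero period of $\omega_\psi$ and the order is precisely $\dt - \#I$.

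The genuine gap, which you yourself flag, is the quantitative Griffiths--Dwork bookkeeping that is supposed to convert $\eta_{j+1}=[(x_0\cdots x_n)^{j+1}\,\Omega_0/F^{j+2}]$ into a specific combination of lower $\eta$'s carrying exactly the linear factors $(\delta+\beta_{ij}\dt)$ and $(\delta-\alpha_j\dt)$ and the constant $\prod_i q_i^{q_i}$ appearing in \eqref{PFEquation}. For Fermat monomials this is a one-line pole-order reduction, but for loop and chain atomic types each $\partial F/\partial x_i$ is a binomial, so eliminating $(x_0\cdots x_n)^{j+1}$ via the Jacobian ideal requires a chained cancellation around the loop (or down the chain), and one must \emph{also} verify that the reduction stays in the $\C(\psi)$-span of powers of $x_0\cdots x_n$ modulo exact forms --- which is not automatic and is where $\det A\neq 0$ and the Kreuzer--Skarke classification actually enter. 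You correctly name this as the combinatorial heart of G\"ahrs's argument but do not carry it out; without it, neither the degree count $D=\dt-\#I$ nor the coefficients in \eqref{PFEquation} are established, and the argument is a roadmap of G\"ahrs's proof rather than a proof. (A minor further point: you invoke "the independence of $\eta_0,\dots,\eta_{D-1}$" in the final sentence, but that is not a separately established fact in your sketch and is not needed once you have the irreducibility argument; what you actually use is only that the period is nonzero.)
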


\begin{proof}
Part (a) is due to G\"ahrs \cite[Theorem 2.8]{GahrsThesis}, and part (b) is a slight reparameterization of variables of a result also due to G\"ahrs \cite[Theorem 6]{Gahrs}.  
\end{proof}

The Picard--Fuchs equation can be written in hypergeometric form.  Indeed, if we change variables with
\begin{equation} 
z \colonequals \biggl(\prod_i q_i^{-q_i }\biggr) \psi^{-\dt}, \qquad \theta \colonequals z \frac{d}{dz} = -(\dt)^{-1} \delta, 
\end{equation}
we may rewrite the Picard--Fuchs equation as 
\begin{equation} \label{eqn: PFhypergeo} 
\prod_{\beta_{ij} \in \pmb{\beta} \smallsetminus I} \left(\theta - \beta_{ij}\right) -  z\prod_{\alpha_j \in \pmb{\alpha} \smallsetminus I} \left(\theta + \alpha_j\right)=0.
\end{equation}
As $\beta_{i0} = 0 \in \pmb{\beta}$ for all $i$, we have $0 \in \pmb{\beta} \smallsetminus I$, hence the Picard--Fuchs equation is a hypergeometric differential equation. In particular, a solution is given by the (generalized) hypergeometric function
\begin{equation}\label{equ:genhypergeometric}
\phantom{i} {}_{D}F_{D-1}\left(\begin{array}{c}\alpha_i \in \pmb{\alpha}\smallsetminus I \\ \beta_{ij} \in \pmb{\beta} \smallsetminus (I \pmb{\cup} \{0\}) \end{array};\, (\textstyle{\prod}_i q_i^{-q_i}) \psi^{-\dt}\right),
\end{equation} 
where $D=D(q_0,\dots,q_n)$ and $I \pmb{\cup} \{0\}$ is the multiset obtained by adjoining $0$ to $I$.

\begin{exm}\label{example: K3QuarticsPF}
Consider a pencil $X_{A, \psi}$ of quartic projective hypersurfaces with dual weights $(1,1,1,1)$.  Then $ \pmb{\alpha} = \{0, \frac{1}{4}, \frac{2}{4}, \frac{3}{4}\}$ and $\pmb{\beta} = \{ 0,0,0,0\}$.  Since $I= \{0\}$, the Picard--Fuchs equation is of the form 
$$
\theta^3 - \lambda \left(\theta + \frac{1}{4}\right)\left(\theta+ \frac{1}{2}\right)\left(\theta+\frac{3}{4}\right) = 0,
$$
which is a hypergeometric differential equation satisfied by the hypergeometric function
\begin{equation}
\phantom{i} {}_3F_{2}\left(\begin{array}{c}\frac{1}{4},\frac{1}{2},\frac{3}{4} \\1, 1 \end{array}; \psi^{-4}\right).
\end{equation} 
\end{exm}

\begin{prop}\label{prop:PFirred}
The Picard--Fuchs equation given in Equation~\eqref{eqn: PFhypergeo} is irreducible.
\end{prop}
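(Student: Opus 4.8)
The plan is to deduce this from the classical irreducibility criterion for generalized hypergeometric equations, already invoked in the proof of Proposition~\ref{prop:uniqueoffrob}: a hypergeometric operator ${}_{D}F_{D-1}$ with upper parameters $a_1,\dots,a_D$ and lower parameters $b_1,\dots,b_D$ (one of the $b_k$ being $1$) is irreducible if and only if $a_j\not\equiv b_k\pmod{\Z}$ for all $j,k$, i.e.\ the multisets $\{a_j\}$ and $\{b_k\}$ are disjoint modulo $\Z$ \cite[Corollary 1.2.2]{Beu}. So the first step is simply to read the parameters of \eqref{eqn: PFhypergeo} off correctly. Inspecting the hypergeometric solution \eqref{equ:genhypergeometric} (equivalently, the indicial equations of \eqref{eqn: PFhypergeo} at $z=0$ and $z=\infty$), the upper parameters are the elements of the multiset $\pmb{\alpha}\smallsetminus I$, and the lower parameters are the elements of $\pmb{\beta}\smallsetminus(I\pmb{\cup}\{0\})$ together with one extra parameter equal to $1$; reduced modulo $\Z$ the latter list is exactly the multiset $\pmb{\beta}\smallsetminus I$, the extra $1$ restoring the copy of $0$ that was deleted in passing from $\pmb{\beta}\smallsetminus(I\pmb{\cup}\{0\})$ to $\pmb{\beta}\smallsetminus I$.

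It then remains to check that $\pmb{\alpha}\smallsetminus I$ and $\pmb{\beta}\smallsetminus I$ are disjoint modulo $\Z$. Every element of $\pmb{\alpha}\smallsetminus I$ lies in the open interval $(0,1)$: all $\alpha_j=j/\dt$ lie in $[0,1)$, and $0=\alpha_0$ lies in $\pmb{\alpha}\cap\pmb{\beta}=I$ and is therefore removed. Every element of $\pmb{\beta}\smallsetminus I$ lies in $[0,1)$ since every $\beta_{ij}=j/q_i$ does. Hence no element of the first multiset can be congruent modulo $\Z$ to an element of the second unless the two are literally equal; but if $x\in\pmb{\alpha}\smallsetminus I$ then $x\in\pmb{\alpha}$ and $x\notin I=\pmb{\alpha}\cap\pmb{\beta}$, so $x\notin\pmb{\beta}$, and in particular $x$ does not occur in $\pmb{\beta}\smallsetminus I$. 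Thus the upper and lower parameter multisets are disjoint modulo $\Z$, and the criterion yields the irreducibility of \eqref{eqn: PFhypergeo}.

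The only point that needs care — and the one thing I would flag as an obstacle, though a mild one — is the bookkeeping of multiplicities when matching \eqref{eqn: PFhypergeo} to the standard hypergeometric form: one must keep track that $0$ occurs with multiplicity $n+1$ in $\pmb{\beta}$, that $\pmb{\beta}\smallsetminus I$ denotes the multiset obtained by deleting one copy of each element of $I$ (so $0$ survives with multiplicity $n\ge 1$), and that ${}_{D}F_{D-1}$ carries an implicit lower parameter $1$. In particular, although $\pmb{\beta}\smallsetminus I$ may still contain elements of $I$ (hence of $\pmb{\alpha}$) when such an element had multiplicity $\ge 2$ in $\pmb{\beta}$, those elements have been removed from $\pmb{\alpha}\smallsetminus I$, so the two multisets relevant to the criterion genuinely do not meet. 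Once the parameters are pinned down the argument is immediate; alternatively, the same reasoning can be phrased through the hypergeometric monodromy of Beukers--Heckman, whose local pieces at $0$ and $\infty$ have eigenvalues $\{e^{2\pi i\beta}:\beta\in\pmb{\beta}\smallsetminus I\}$ and $\{e^{2\pi i\alpha}:\alpha\in\pmb{\alpha}\smallsetminus I\}$, and then one observes exactly as above that these eigenvalue sets are disjoint.
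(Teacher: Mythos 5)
Your argument is correct and is essentially the same as the paper's: both invoke the Beukers irreducibility criterion for hypergeometric equations, note that all parameters lie in $[0,1)$ so that two differ by an integer only if they coincide, and observe that any coincidence between $\pmb{\alpha}$ and $\pmb{\beta}$ is removed by the intersection $I$ (using that $\pmb{\alpha}$ has distinct elements). Your extra paragraph on multiset bookkeeping makes explicit a point the paper treats tersely in the parenthetical remark, but the substance of the proof is identical.
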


\begin{proof}
This differential equation has parameters such that $\alpha_i - \beta_{jk} \not\in \Z$ for all $i,j,k$, for the following reason: the elements of $\pmb{\alpha}$ and $\pmb{\beta}$ are already in $[0,1)$, so two differ by an integer if and only if they are equal; and whenever two coincide, they are taken away by the set $I$ (noting the elements of $\pmb{\alpha}$ are distinct).  Therefore, the differential equation is irreducible \cite[Corollary 1.2.2]{Beu}. 
\end{proof}

\subsection{Group invariance}\label{subsec:GroupInvariance}

In this section, we show that the subspace of cohomology associated to the Picard--Fuchs equation for the holomorphic form is contained in the subspace fixed by the action of a finite group.  This group arises naturally in the context of Berglund--H\"{u}bsch--Krawitz mirror symmetry.  Throughout, we work over $\C$.

We begin by establishing three groups that are useful when studying invertible potentials and prove a result about the invariant pieces of cohomology associated to them. Let $F_A$ be an invertible polynomial.  First, consider the elements of the maximal torus $\G_m^{n+1}$ acting diagonally on $\PP^n$ and leaving the polynomial $F_A$ invariant:
\begin{equation}
\Aut(F_A) \colonequals \{ (\lambda_0, \ldots, \lambda_n) \in \G_m^{n+1}  : F_A(\lambda_i x_i) = F_A(x_i)\} \subseteq \GL_{n+1}(\C).
\end{equation}
Write $A^{-1}=(b_{ij})_{i,j} \in \GL_{n+1}(\Q)$ and for $j=0,\dots,n$ let
\[ \rho_j= (\exp(2\pi i b_{0j}), \ldots, \exp(2\pi i b_{nj})); \] 
then $\rho_0,\dots,\rho_n$ generate $\Aut(F_A)$.  

Next, we consider the subgroup
\begin{equation}
\SL(F_A) \colonequals  \{ (\lambda_0, \ldots, \lambda_n) \in \Aut(F_A)  :  \lambda_0 \cdots\lambda_n = 1 \} = \Aut(F_A) \cap \SL_{n+1}(\C)
\end{equation}
acting invariantly on the holomorphic form, and the subgroup 
\begin{equation}
J_{F_A} \colonequals \langle \rho_0 \cdots \rho_n\rangle
\end{equation}
obtained as the cyclic subgroup of $\Aut(F_A)$ generated by the product of the generators $\rho_j$.  Then $J_{F_A}$ is the subgroup of $\Aut(F_A)$ that acts trivially on $X_A$.  

We now describe Berglund--H\"ubsch--Krawitz mirrors explicitly. Consider a group $G$ such that $J_{F_A} \subseteq G \subseteq \SL(F_A)$. Then we have a Calabi--Yau orbifold $Z_{A,G} \colonequals X_A / (G / J_{F_A})$. The mirror is given by looking at the polynomial $F_{A^T}$ obtained from the transposed matrix $A^T$ and the hypersurface $X_{A^T} \subset W\PP^n(q_0, \ldots, q_n)$, where $q_i$ are the dual weights.

As above, $\Aut(F_{A^T})$ is generated by the elements 
\[ \rho_j^T \colonequals (\exp(2\pi i b_{j0}), \ldots, \exp(2\pi i b_{jn})). \] 
We define the dual group to $G$ to be 
$$
G^T \colonequals \left\{ \prod_{j=0}^n (\rho_j^T)^{s_j} : \prod_{j=0}^n x^{s_j} \text{ is $G$-invariant} \right\} \subseteq \Aut(F_{A^T}). 
$$
Since $J_{F_A} \subseteq G \subseteq \SL(F_A)$, we have $J_{F_{A^T}} \subseteq G^T \subseteq \SL(F_{A^T})$ \cite[Proposition 3, Remark 3.2]{ABS14}.  Moreover, $J_{F_{A^T}}$ is generated by the element 
\begin{equation} \label{eqn:Jgen}
J^T \colonequals (\exp(2\pi i q_0/\dt), \ldots, \exp(2\pi i q_n/\dt)).
\end{equation}

Thus, we obtain a Calabi--Yau orbifold $Z_{A^T, G^T} \colonequals X_{A^T} / (G^T / J_{F_{A^T}})$. Berglund--H\"ubsch--Krawitz duality states that $Z_{A,G}$ and $Z_{A^T, G^T}$ are mirrors.

\begin{prop} \label{prop:dimChn1}
Let $X_{A,\psi}$ be an invertible \emph{pencil} of Calabi--Yau $(n-1)$-folds determined by the integer matrix $A$.  Then for all $\psi$ such that $X_{A,\psi}$, we have
\begin{equation} \label{eqn:dimCHn}
\dim_\C H_{\textup{prim}}^{n-1}(X_{A,\psi}, \C)^{\SL(F_A)}\geq\dt - \#I.
\end{equation}
\end{prop}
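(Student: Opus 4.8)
The plan is to exhibit an explicit $D$-dimensional $\SL(F_A)$-invariant subspace of $H^{n-1}_{\mathrm{prim}}(X_{A,\psi},\C)$, where $D \colonequals D(q_0,\dots,q_n) = \dt - \#I$, namely the span of the holomorphic form and its $\delta$-derivatives for $\delta = \psi\, d/d\psi$, and to bound its dimension below using the Picard--Fuchs equation of Theorem~\ref{thm: Gahrs}. First I would record that the holomorphic form $\omega_\psi \colonequals \Res(\Omega_0/F_{A,\psi}) \in H^{n-1}_{\mathrm{prim}}(X_{A,\psi},\C)$ is fixed by $\SL(F_A)$: for $g = \diag(\lambda_0,\dots,\lambda_n) \in \SL(F_A)$ one computes $g^*\Omega_0 = (\lambda_0\cdots\lambda_n)\Omega_0 = \Omega_0$, while $g^*F_{A,\psi} = F_{A,\psi}$ since $F_A$ is $\Aut(F_A)$-invariant and the deformation monomial $x_0\cdots x_n$ is $\SL_{n+1}$-invariant; hence $g$ fixes $\Omega_0/F_{A,\psi}$, and therefore fixes $\omega_\psi$ by functoriality of the Griffiths residue. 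This computation is uniform in $\psi$, and $g$ acts on the total space of the family $X_{A,\psi}\to\A^1$ commuting with the projection, hence on the Gauss--Manin connection, and in particular commutes with $\delta$.

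Next I would set $V_\psi \colonequals \operatorname{span}_\C\{\delta^k\omega_\psi : k \geq 0\}$. By the previous step every $\delta^k\omega_\psi$ is $\SL(F_A)$-fixed and still lies in $H^{n-1}_{\mathrm{prim}}(X_{A,\psi},\C)$, so $V_\psi \subseteq H^{n-1}_{\mathrm{prim}}(X_{A,\psi},\C)^{\SL(F_A)}$ and $V_\psi$ is $\delta$-stable. To see that $\dim_\C V_\psi \geq D$ for generic $\psi$, note that by Theorem~\ref{thm: Gahrs} the Picard--Fuchs operator $L \in \C(\psi)\langle\delta\rangle$ annihilating $\omega_\psi$ has order $D$, and by Proposition~\ref{prop:PFirred} it is irreducible; since $\omega_\psi \neq 0$, an irreducible annihilator of $\omega_\psi$ is forced to be its minimal-order annihilator, so $\omega_\psi, \delta\omega_\psi, \dots, \delta^{D-1}\omega_\psi$ are linearly independent over $\C(\psi)$, hence linearly independent in $H^{n-1}_{\mathrm{prim}}(X_{A,\psi},\C)$ for all $\psi$ outside some proper Zariski-closed subset of the parameter line.

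Finally I would promote this from generic $\psi$ to every smooth fiber. The smooth fibers form a nonempty Zariski-open, hence connected, subset $U$ of the $\psi$-line, and $\SL(F_A)$ acts fiberwise on the local system $R^{n-1}f_*\C$ over $U$ commuting with monodromy; therefore $\psi \mapsto \dim_\C H^{n-1}_{\mathrm{prim}}(X_{A,\psi},\C)^{\SL(F_A)}$ is constant on $U$. Since this constant is $\geq \dim_\C V_{\psi_0} = D$ at a generic $\psi_0 \in U$, it is $\geq D$ at every smooth fiber, which is the claim.

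The step I expect to be the main obstacle is the last one: one must verify carefully that the invariant subspace genuinely forms a sub-local-system, so that its rank is monodromy-invariant and the generic bound propagates to an arbitrary smooth fiber, and one must be sure that the operator produced by G\"ahrs is the minimal-order annihilator of $\omega_\psi$ rather than a proper left multiple of it --- this is exactly where the irreducibility of Proposition~\ref{prop:PFirred} is used. The remaining ingredients are a $\psi$-uniform version of the standard Griffiths--Dwork and Gauss--Manin formalism, together with the already-established formula $D = \dt - \#I$ for the order of the Picard--Fuchs equation.
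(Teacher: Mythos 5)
Your argument is correct and is essentially the paper's own (very terse) argument, expanded: the paper simply asserts that the Picard--Fuchs equation is $\SL(F_A)$-invariant, which is precisely your observation that the cyclic $\delta$-span $V_\psi$ of the holomorphic form lies inside the $\SL(F_A)$-invariant subspace and has dimension equal to the order $\dt-\#I$ of the Picard--Fuchs operator. Your extra care with minimality of the operator (via Proposition~\ref{prop:PFirred}) and with propagating the generic-fiber bound to all smooth fibers through the sub-local-system of invariants is sound and fills in details the paper leaves implicit.
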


\begin{proof}
We have $\dim_\C H_{\textup{prim}}^{n-1}(X_{A,\psi}, \C)^{\SL(F_A)}\geq \dt - \#I$ since the Picard--Fuchs equation is $\SL(F_A)$-invariant.
\end{proof}


In certain cases, we have equality. We can compute $\dim_\C H_{\textup{prim}}^{n-1}(X_{A}, \C)^{\SL(F_A)}$ in the following way. Let 
\[ \mathscr{Q}_{F_A} \colonequals \frac{\C[x_0,\ldots,x_n]}{\left\langle \partial F_A/\partial x_0, \ldots, \partial F_A/\partial x_n \right\rangle} \]
be the \defi{Milnor ring} of $F_A$, i.e., the quotient of $\C[x_0,\ldots,x_n]$ by the Jacobian ideal.  A consequence of the Griffiths--Steenbrink formula \cite[Theorem 4.3.2]{Dol82} is that if $J_{F_A} \subseteq G$, then the $G$-invariant subspace of the Milnor ring viewed as a $\C$-module, $(\mathscr{Q}_{F_A})^G$, corresponds to the cohomology $H_{\textup{prim}}^{n-1}(X_A, \C)^{G}$.  

\begin{exm}\label{exm:Fermat}
Let $F_A = \sum_{i=0}^n x_i^{n+1}$ be the defining polynomial for the Fermat hypersurface $X_A \subseteq \PP^{n}$. Here, $\SL(F_A) = (\Z/ (n+1)\Z)^{n}$ where an element $(\xi_1,\ldots,\xi_n) \in \SL(F_A)$ acts by 
\[
(\xi_1, \ldots, \xi_{n}) \cdot (x_0, \ldots, x_n) \longmapsto (\xi_1 \cdots \xi_n x_0, \xi_1^{-1} x_1,\ldots,\xi_{n-1}^{-1} x_{n-1}, \xi_n^{-1} x_n).
\]
Note that in order for $\prod_i x_i^{a_i} \in \mathscr{Q}_{F_A}$ to be $\SL(F_A)$-invariant, it must satisfy the equalities $a_0 - a_i \equiv 0 \psmod{n+1}$ for all $i$. Thus the only $\SL(F_A)$-invariant elements of the Milnor ring are the $n$ elements $(x_0\cdots x_n)^a$ for $ 0\leq a < n$. Note that $n = \dt -\#I$, or the order of the Picard--Fuchs equation for this example, so equality holds in \eqref{eqn:dimCHn}.
\end{exm}

\subsection{Frobenius structure for the subspace associated to the holomorphic form} \label{sec:frobhol}

In this section, we will study a subspace $W_{\psi} \subset  H^{n+2}(\Omega_{\diamond,\psi}^{\bullet})$ generated by the connection acting on the holomorphic form. The dimension of this subspace is equal to the order of the Picard--Fuchs equation. It will in the end correspond to a factor 
\[
R_{\psi_0}(qT) \colonequals \det(1-\Frob T \mid W_{\psi})|_{\psi=\widehat{\psi_0}}
\]
of the zeta function for $X_{A, \psi_0}$, where $X_{A,\psi_0}$ is a nondegenerate and smooth member of the pencil. 
We prove in this section that there is a Frobenius structure on $W_\psi$ by examination of the unit root.

Let $K = k(\psi)$ where $k$ is an algebraically closed field of characteristic 0. Let $D \colonequals \partial/\partial \psi$ be the standard derivation on $K$. The space $H^{n+2}(\Omega_{\diamond,\psi}^{\bullet})$ is a differential module that is finite-dimensional over $K$ with connection $\nabla:=\nabla(D)$. Consider the submodule $W_\psi \subseteq H^{n+2}(\Omega_{\diamond,\psi}^{\bullet})$ obtained by repeatedly applying $\nabla$ to the holomorphic form defined by the monomial $\xi_0 = wx_0 x_1 \cdots x_n$.  Then $W_\psi$ is a $\nabla$-stable subspace with cyclic basis $\xi_0, \ldots, \xi_{N-1}, \ \text{with} \ \xi_j = \nabla^j(\xi_0)$. so that
$$
\nabla \begin{pmatrix} \xi_0 \\ \vdots \\ \xi_{N-1} \end{pmatrix} = G^T \begin{pmatrix} \xi_0 \\ \vdots \\ \xi_{N-1} \end{pmatrix}
$$
where 
$$
G = \begin{pmatrix} 0  & \hdots & 0 & g_{N-1} \\ 1 & \hdots & 0  & g_{N-2} \\ \vdots & \ddots & \vdots & \vdots \\ 0 &  \hdots & 1 & g_0 \end{pmatrix},
$$
with $g_i \in K$.

In the theory of Dwork, the Frobenius structure on the differential equation arises in the dual theory. Say $\mathcal{K}_{\psi}$ is dual to  $H^{n+2}(\Omega_{\diamond,\psi}^{\bullet}$ and let $W_\psi^*$ be dual to $W_{\psi}$. It is a differential module over $K$ with connection $\nabla^*$ satisfying the pairing 
$$
D(\xi, \xi^*) = (\nabla(D)\xi, \xi^*) + (\xi, \nabla^*(D) \xi^*)
$$
for $\xi \in W_\psi, \xi \in W_\psi^*$. Via the dual basis, we obtain the  connection acting on the dual basis $\nabla^*$ on $W_\psi^*$:
$$
\nabla^* \begin{pmatrix} \xi_0^* \\ \vdots \\ \xi_{N-1}^* \end{pmatrix} = -G\begin{pmatrix} \xi_0^* \\ \vdots \\ \xi_{N-1}^* \end{pmatrix}.
$$
A horizontal section $\zeta^* = \sum_{i=0}^{N-1} C_i(\psi) \xi^*_i$  under $\nabla^*$ has coefficients $\{C_0(\psi), \ldots, C_{N-1}(\psi)\}$ which satisfy the differential equation
$$
D(C_0(\psi), \ldots, C_{N-1}(\psi)) = (C_0(\psi), \ldots, C_{N-1}(\psi))G.
$$
Note that in our case working with a cyclic basis $C_0(\psi)$ is a solution of the scalar differential equation $Ly= 0 \ \text{where} \ L= D^N - \sum_{i=0}^{N-1} g_i(\psi) D^i$.

By Proposition~\ref{prop:PFirred}, the operator $L$ is irreducible.

\begin{prop}\label{irredDiffModule}
$W_\psi^*$ contains no nonzero, proper $\nabla^*$-stable differential submodule.
\end{prop}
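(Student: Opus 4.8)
The plan is to pass between the differential module $W_\psi^\ast$ and the scalar operator $L$. The key point is the standard equivalence: since $W_\psi^\ast$ is a cyclic differential module — its dual $W_\psi$ has the cyclic basis $\xi_0,\dots,\xi_{N-1}$, so $W_\psi^\ast$ is cyclic with generator $\xi_0^\ast$ (or, dually, the solution $C_0(\psi)$ of $Ly=0$ realizes $W_\psi^\ast$ as $K[D]/K[D]L$) — a nonzero proper $\nabla^\ast$-stable submodule would correspond to a nontrivial factorization of $L$ in the ring $K[D]$ of differential operators. Concretely, if $V \subsetneq W_\psi^\ast$ were such a submodule, then $W_\psi^\ast/V$ is a nonzero proper quotient, which by the cyclic vector theorem is again cyclic and so cut out by a monic operator $L_2$ of order $\dim(W_\psi^\ast/V) < N$; the kernel $V$ is then cut out by a complementary operator, giving $L = L_2 L_1$ with both factors of positive order in $K[D]$.

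So the first step is to make this correspondence precise: translate "$W_\psi^\ast$ has a nonzero proper $\nabla^\ast$-stable submodule" into "$L$ factors nontrivially in $K[D]$." The second step is to invoke the irreducibility already established. By Proposition~\ref{prop:PFirred}, the Picard--Fuchs equation \eqref{eqn: PFhypergeo} is irreducible; and $L$ is (up to the change of variables $z = (\prod_i q_i^{-q_i})\psi^{-\dt}$, $\theta = z\,d/dz$, and clearing of the leading coefficient) exactly this hypergeometric operator, since by construction $W_\psi$ is generated by the connection acting on the holomorphic form and hence $C_0(\psi)$ satisfies the Picard--Fuchs equation. A change of independent variable and multiplication by a unit in $K$ do not affect irreducibility in $K[D] = K(\psi)[D]$ — the ring of differential operators over the rational function field is unchanged — so $L$ is irreducible over $K$. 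This contradicts the factorization $L = L_2 L_1$ from the first step, and we conclude $W_\psi^\ast$ has no nonzero proper $\nabla^\ast$-stable submodule.

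The main obstacle, and the only genuinely non-formal point, is the cyclic-vector bookkeeping in the first step: one must be a little careful that $W_\psi^\ast$ (not just $W_\psi$) is genuinely cyclic and that the operator attached to $W_\psi^\ast$ is $L$ itself rather than its formal adjoint $L^\ast$ — and then observe that it does not matter, since $L$ is irreducible over $K$ if and only if $L^\ast$ is (the adjoint anti-automorphism of $K[D]$ reverses the order of factors but preserves the property of being a nontrivial product). An alternative route that sidesteps the adjoint entirely: the solutions of $Ly=0$ are, by the discussion preceding the proposition, exactly the coordinates of horizontal sections of $\nabla^\ast$ on $W_\psi^\ast$; a proper $\nabla^\ast$-stable submodule would force the solution space of $L$ to contain a proper subspace stable under the local monodromy at every point, i.e. would give $L$ a nontrivial right factor, contradicting irreducibility. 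Either way the real content is entirely contained in Proposition~\ref{prop:PFirred}, and what remains is the dictionary between submodules, factorizations, and monodromy-stable subspaces of solutions.
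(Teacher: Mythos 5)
Your proof is correct and rests on the same essential idea as the paper's: reduce the simplicity of $W_\psi^*$ to the irreducibility of the scalar Picard--Fuchs operator (Proposition~\ref{prop:PFirred}). The difference is one of packaging. The paper's argument reconstructs the module/operator dictionary by hand: it extends a cyclic basis of the putative submodule $M_0^*$ to a basis of $W_\psi^*$, writes the connection matrix in block-triangular form, produces a horizontal section whose leading coefficient satisfies a lower-order scalar equation, rewrites this in the original cyclic basis to get a nontrivial relation among $A_0, DA_0, \ldots, D^{N-1}A_0$, and then invokes the division algorithm in $K[D]$ to extract a proper right factor of $L$. You instead invoke the standard abstract equivalence (cyclic module $\cong K[D]/K[D]L$, submodules $\leftrightarrow$ right factors of $L$) and dispatch the adjoint issue by noting that the adjoint anti-automorphism preserves irreducibility; your alternative phrasing via monodromy-stable subspaces of the solution space is also valid. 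This is cleaner and relies on known theory, whereas the paper's version is self-contained.

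One small imprecision worth flagging: $\xi_0^*$ is generally \emph{not} a cyclic vector for $\nabla^*$ on $W_\psi^*$. From the companion-form matrix $-G^T$ one sees that $\xi_{N-1}^*$ is the natural cyclic vector (applying $\nabla^*$ to $\xi_{N-1}^*$ successively produces $\xi_{N-2}^*, \xi_{N-3}^*, \ldots$ up to lower-order terms), whereas $\nabla^*\xi_0^*$ is a multiple of $\xi_{N-1}^*$ alone. This does not damage the argument---the cyclic vector theorem guarantees a cyclic vector exists, and the horizontal-section characterization you give in the alternative route identifies the correct scalar operator $L$ regardless---but you should either point to $\xi_{N-1}^*$ or simply cite the cyclic vector theorem rather than naming a specific generator. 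Also note that in the factorization $L = L_2 L_1$ coming from $V \subsetneq W_\psi^* \cong K[D]/K[D]L$, the submodule $V$ is $K[D]L_1/K[D]L$ and the quotient is $K[D]/K[D]L_1$; your roles of $L_1, L_2$ are swapped, though again this does not affect the conclusion.
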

\begin{proof}
This proposition is proven by Sabbah \cite[Theorem 2.4]{Sabbah}; for completeness, we provide an argument here. Suppose $M_0^*$ were such a nonzero, proper $\nabla^*$-stable differential submodule of dimension $0 < r <N$. We will show that if such a proper submodule existed, then the Picard--Fuchs operator $L(D)$ has a proper factorization in the noncommutative polynomial ring $K[D]$ and the Picard-Fuchs equation would necessarily be reducible, contradicting Proposition~\ref{prop:PFirred}.

Without loss of generality we may assume $M_0^*$ has a cyclic basis $\{\gamma_0^*, \nabla \gamma_0^*, \ldots, (\nabla^*)^{r-1}\gamma_0^*\}$.  Choose elements $\delta_r^*, \ldots, \delta_{N-1}^* \in W_{\psi}^*$ so that the set $\{\gamma_0, \nabla \gamma_0, \ldots, \nabla^{r-1}\gamma_0, \delta_r, \ldots, \delta_{N-1}\}$ is a basis for $W_{\psi}$.  Then we can write the connection matrix for $W_{\psi}^*$ in the form:

$$
\nabla^*   \left(\begin{array}{@{}c@{}}
    \gamma_0^* \\ \vdots \\ (\nabla^*)^{r-1}\gamma_0^* \\\hline \delta_j^*  
  \end{array}\right)  = -H
  \left(\begin{array}{@{}c@{}}
    \gamma_0^* \\ \vdots \\ (\nabla^*)^{r-1}\gamma_0^* \\\hline \delta_j^*  
  \end{array}\right), \text{ where }H:= \left(\begin{array}{@{}cccc|c@{}}
    0 & \cdots & 0 & h_{r-1} & *\\
    1 & \cdots & 0 & h_{r-2} &* \\
    \vdots & \ddots & \vdots & \vdots & * \\
    0 & \cdots & 1 & h_0 & * \\\hline
    0 & 0 & 0 & 0 & * 
  \end{array}\right),
$$
where $\{h_i\}_{i=0}^{r-1} \subset K$.
We consider a horizontal section 
$$
\sum_{i=0}^{r-1} B_i (\nabla^*)^i \gamma_0^* + \sum_{i=r}^{N-1} B_i \delta_i^*,
$$
for some $B_i(\psi) \in K$ so that $D(B_0, \ldots, B_{N-1}) = (B_0, \ldots, B_{N-1}) H$, and 
$$
D(B_0, \ldots, B_{r-1}) = (B_0, \ldots, B_{r-1}) \left(\begin{array}{@{}cccc@{}}
    0 & \cdots & 0 & h_{r-1} \\
    1 & \cdots & 0 & h_{r-2} \\
    \vdots & \ddots & \vdots & \vdots  \\
    0 & \cdots & 1 & h_0  \\
  \end{array}\right).
$$
So the entries $\{B_0,...B_{r-1} \}$ are dependent over K. We now can rewrite this horizontal section in terms of our original dual basis
$$
\sum_{i=0}^{r-1} B_i (\nabla^*)^i \gamma_0^* + \sum_{i=r}^{N-1} B_i \delta_i^*= \sum_{i=0}^{N-1} A_i (\nabla^*)^i \xi_i^*,
$$
for some $A_i \in K$. Note that $A_0$ must be a solution of the Picard--Fuchs differential equation.  There exists some nonsingular matrix $\mathcal{A}$ over $K$ so that 
$$
(\gamma_0^*, \ldots (\nabla^*)^{r-1}\gamma_0^*, \delta_{r}^*, \ldots, \delta_{N-1}^*)^T = \mathcal{A} (\xi_0^*, \ldots, \xi_{N-1}^*)^T.
$$
Using this change of basis, we can see that 
$$
(B_0, \ldots, B_{N-1}) \mathcal{A} = (A_0, \ldots, A_{N-1})
$$
where $A_i = D^i A_0$, since $\sum_{i=0} A_i (\nabla^*)^i \xi_i^*$ is a horizontal section. This gives a non-trivial homogeneous relation among $A_0, \ldots, D^{N-1}A_0$; thus, $A_0$ satisfies a lower order differential equation defined over $K$. Using the usual argument via the division algorithm in the noncommutative ring $K[D]$ we conclude that the Picard-Fuchs operator  has a non-trivial right factor in $K[D]$ which contradicts the irreducibility of the Picard--Fuchs equation.
\end{proof} 

\begin{lem}\label{lem:ExistenceFrobenius}
Let $\psi \in \PP^1$ be such that $X_{A,\psi}$ is nondegenerate and smooth. Then there exists a strong Frobenius structure on $W_\psi^*$.
\end{lem}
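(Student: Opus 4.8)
The plan is to obtain the Frobenius structure on $W_\psi^{*}$ by restricting the one that Dwork's deformation theory, in the Adolphson--Sperber formulation, places on the ambient cohomology; the real content is that this restriction is legitimate.

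First I would set the stage. Since $X_{A,\psi}$ is nondegenerate and smooth, the hypotheses behind Theorem~\ref{thm:AS} are in force (after a convenience arrangement in toric coordinates, as in Lemma~\ref{lem:pn1km}), so over a suitable $p$-adic dagger algebra $\mathcal R$ of analytic functions on the complement in $\PP^1_\psi$ of the singular and bad fibers, the Dwork cohomology $\mathcal H \colonequals H^{n+2}(\Omega^{\bullet}_{\diamond,\psi})$ is a finite free $\mathcal R$-module carrying the connection $\nabla$ together with a horizontal strong Frobenius structure $\alpha\colon \sigma^{*}\mathcal H \xrightarrow{\ \sim\ } \mathcal H$, where $\sigma$ is a Frobenius lift with $\sigma(\psi)=\psi^{q}$; dually, $\mathcal K_\psi = \mathcal H^{*}$ carries such a structure as well, and $W_\psi^{*}$ is the differential module cut out inside $\mathcal K_\psi$ by the Picard--Fuchs operator $L$. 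I would then note that the lemma reduces to a single claim: that $\alpha$ preserves $W_\psi$, i.e.\ $\alpha(\sigma^{*}W_\psi)\subseteq W_\psi$. Granting this, $\alpha$ restricts to $\sigma^{*}W_\psi\to W_\psi$, the dual structure descends to $W_\psi^{*}$, and its strength is inherited from that of $\alpha$ on $\mathcal H$.

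The heart of the matter is the inclusion $\alpha(\sigma^{*}W_\psi)\subseteq W_\psi$, which I would prove by playing irreducibility against the unit root. By Propositions~\ref{prop:PFirred} and~\ref{irredDiffModule}, $W_\psi$ and hence $\sigma^{*}W_\psi$ is irreducible as a differential module, and $W_\psi$ is the $\nabla$-submodule of $\mathcal H$ generated by the holomorphic form $\xi_0 = wx_0\cdots x_n$; since $\alpha$ is horizontal and an isomorphism, $\alpha(\sigma^{*}W_\psi)$ is again a $\nabla$-submodule of $\mathcal H$, abstractly isomorphic to $\sigma^{*}W_\psi$. If it were not contained in $W_\psi$, projecting to $\mathcal H/W_\psi$ would realize $\sigma^{*}W_\psi$ as a $\nabla$-subquotient meeting $W_\psi$ trivially, so $\mathcal H$ would contain two independent Picard--Fuchs constituents. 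I would then contradict this using the unit root: by Lemma~\ref{lem:qinFrob}, together with Proposition~\ref{prop:Newtonoverhodge} and $h^{0,n-1}=1$, the operator $q^{-1}\Frob$ on $\mathcal H$ has at most one unit root, and it is carried by the constituent through which the holomorphic form $\xi_0$ passes; two independent such constituents cannot coexist. Hence $\alpha(\sigma^{*}W_\psi)\subseteq W_\psi$, and the lemma follows.

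I expect the main obstacle to be precisely this last implication: rigorously converting ``two Picard--Fuchs constituents'' into ``two unit roots'', since a $\nabla$-constituent of $\mathcal H$ need not be $\alpha$-stable and so carries no a priori Frobenius eigenvalues. I would circumvent it in one of two ways. Either work with the unit-root sub-$F$-crystal $U\subseteq\mathcal H$ over the ordinary locus, which is $\alpha$-stable, one-dimensional, and reduces modulo $p$ to the line spanned by $\overline{\xi_0}$ by Lemma~\ref{lem:qinFrob}, and show that a second copy of the Picard--Fuchs module inside $\mathcal H$ would force $\dim U\ge 2$; or, more robustly, first replace $\mathcal H$ by its $\SL(F_A)$-invariant part $\mathcal H^{\SL(F_A)}$, which contains $W_\psi$ by Proposition~\ref{prop:dimChn1} and is a sub-$F$-isocrystal defined over $\F_q$ because the trivial-character eigenspace is stable under Galois, and then run the same dichotomy inside this much smaller module, where the Picard--Fuchs differential module occurs with multiplicity one and the $\alpha$-stability of $W_\psi$ is essentially immediate.
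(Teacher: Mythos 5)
Your high-level strategy (irreducibility of the Picard--Fuchs module plus uniqueness of the unit root) and the dichotomy it yields---$\alpha(\sigma^*W_\psi)\subseteq W_\psi$ or $\alpha(\sigma^*W_\psi)\cap W_\psi=0$---are both correct, and you have honestly flagged the genuine gap: a $\nabla$-subquotient of $\mathcal H$ need not be $\alpha$-stable and so carries no a priori Frobenius eigenvalues, so ``two Picard--Fuchs constituents'' does not produce ``two unit roots.'' Neither of your proposed fixes closes this. Fix 1 would force $\dim U\ge 2$ only if the putative second constituent $\alpha(\sigma^*W_\psi)$ were itself $\alpha$-stable, which is exactly what is in question; moreover the assertion that $U$ reduces mod $p$ to the line of $\overline{\xi_0}$ is not what Lemma~\ref{lem:qinFrob} literally gives (it controls the support of the mod-$p$ matrix of $q^{-1}\Frob$, not the unit-root sublattice directly). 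Fix 2 needs both a multiplicity-one statement for the Picard--Fuchs module inside $\mathcal H^{\SL(F_A)}$ (not established in the paper, and the invariant part can be strictly larger than $W_\psi$, e.g.\ for the families of Table~\ref{table:chainfamilies}), and a reason why multiplicity one of $W_\psi$ constrains $\alpha(\sigma^*W_\psi)$, which as a $\nabla$-module is isomorphic to $\sigma^*W_\psi$ rather than to $W_\psi$.

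The paper fills the gap by a concrete rank computation on the dual side, which is the piece you are missing. One shows $\eta_0^*\in W_\psi^*$ by contradiction: if not, adjoin $\eta_0^*$ to the cyclic basis $\{(\nabla^*)^i\omega_0^*\}$ of $W_\psi^*$; the $\eta_0^*$-column of the matrix $A^*$ of $q^{-1}\Frob$ is zero off the diagonal with a unit on the diagonal (eigenvector), while by \eqref{eqn:anunu} the $\omega_0^*$-diagonal entry is also a unit on the ordinary locus, so $A^*$ would have rank at least $2$ mod $p$, contradicting Lemma~\ref{lem:qinFrob}. Once $\eta_0^*\in W_\psi^*$, one writes $\Frob\eta_0^*=u\eta_0^*$, uses that Frobenius intertwines $\nabla^*$ to see that $\Frob$ stabilizes the cyclic $\nabla^*$-module generated by $\eta_0^*$, and invokes Proposition~\ref{irredDiffModule} to identify that cyclic module with all of $W_\psi^*$. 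This is precisely the input your dichotomy needs: with $\eta_0^*\in W_\psi^*$, the intersection $\alpha(\sigma^*W_\psi^*)\cap W_\psi^*$ contains $u\eta_0^*\ne 0$, so the first branch of your dichotomy must hold. Replace the vague ``two constituents would force two unit roots'' step with this rank computation and your proof goes through.
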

\begin{proof}
We recall section \ref{sec:unitroot}.  Suppose that $X_\psi$ is ordinary, a condition that holds for all but finitely many $\psi \in \overline{\F}_p$.  Then there is a unique unit root of the characteristic polynomial of Frobenius acting on $H^{n+2}(\Omega_{\diamond,\psi}^{\bullet})$, and this yields a unique unit root eigenvector $\eta_0$ up to scaling.  The same holds for the dual space $\mathcal{K}_{\psi}$ of $H^{n+2}(\Omega_{\diamond,\psi}^{\bullet})$ with unique unit eigenvector $\eta_0^*$. We  claim that $\eta_0^* \in W_\psi^*$.  Assume for the purposes of contradiction that $\eta_0^* \not\in W_\psi^*$.  Then  we may take as a basis for  $\mathcal{K}_{\psi}$  a set containing $\eta_0^*$ and the cyclic basis $\{(\nabla^*)^i\}_{i=0}^{N-1} \omega_0^*$ for $W_{\psi}^*$ as in \eqref{eqn:defofomega0}. Let $A^*$ be the matrix of $q^{-1}$-Frobenius in this basis.  Since $\eta_0^*$ is a unit eigenvector, the diagonal coefficient of $A^*$ corresponding to $\eta_0^*$ is nonzero modulo $p$ (and the other coefficients of this column are zero).  But by \eqref{eqn:anunu}, the diagonal coefficient of $A^*$ for $\omega_0^*$ is nonzero modulo $p$ because $X_\psi$ is ordinary.  Therefore $A^*$ has rank at least $2$ modulo $p$, and this contradicts Lemma \ref{lem:qinFrob}.  

  So now let $\eta_0^* \in W_\psi^*$ be the unit root eigenvector, unique up to scaling and defined on the ordinary locus $U\subseteq \mathbb{P}^1$ where $U$ is the complement of the union of $\{0,1,\infty\}$ and the supersingular locus for the given pencil $X_{A,\psi}$. Then writing $\operatorname{Frob}$ for $q^{-1}$-Frobenius
$$
\operatorname{Frob}  \eta_0^* = u \eta_0^*,
$$
where $u \in K$ is a unit on the locus $U$. Frobenius commutes with the connection $\nabla^*$, so 
$$
\operatorname{Frob} (\nabla^* \eta_0^*) = \nabla^* \operatorname{Frob} \eta_0^* = u (\nabla^* \eta_0^*) + D(u)\eta_0^*,
$$
which implies that  Frobenius is stable on the submodule that is generated by the cyclic basis given by $\{(\nabla^*)^i \eta_0^* \ | \ i \in \mathbb{Z}_{\geq0}\}$, but this is $W_\psi^*$ by Proposition~\ref{irredDiffModule}. Hence, for each choice of pencil indexed by $\diamond$ the Picard-Fuchs equation has a strong Frobenius structure in the sense of Dwork \cite{uniqueness}.
\end{proof}


\subsection{Proof of main result}

In this section, we prove our main result.  We will make use of the following lemma.

\begin{lem} \label{lem:XFqproj}
Let $X$ be a projective variety over $\F_q$ and let $G$ be a finite group of automorphisms of $\overline{X} = X \times_{\F_q} \overline{\F_q}$ stable under $\Gal(\overline{\F_q}/\F_q)$.  Then the following statements hold.
\begin{enumalph}
\item The quotient $\overline{X}/G$ exists as a projective variety over $\F_q$.  
\item Let $\ell \neq p$ be prime and suppose $\gcd(\#G,\ell) = 1$.  Then for all $i$, the natural map
\[ H_{\textup{\'et}}^i(\overline{X}/G,\Q_\ell) \xrightarrow{\sim} H_{\textup{\'et}}^i(\overline{X},\Q_\ell)^G \]
is an isomorphism.
\end{enumalph}
\end{lem}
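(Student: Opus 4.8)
The plan is to prove the two statements separately, each by a standard descent-and-invariance argument.

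\medskip

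\textbf{Part (a): existence of the quotient over $\F_q$.} First I would recall that for a quasi-projective variety $\overline{X}$ over an algebraically closed field with an action of a finite group $G$, the geometric quotient $\overline{X}/G$ exists as a quasi-projective variety (SGA~1, or Mumford's \emph{Abelian Varieties}; the key point is that $\overline{X}$ is covered by $G$-stable affine opens since it is quasi-projective, and on an affine $\Spec R$ the quotient is $\Spec R^G$ with $R^G$ finitely generated by Noether). When $\overline{X}$ is projective the quotient is again projective, e.g. by descending an ample $G$-linearized line bundle (replace a chosen ample $\mathcal L$ by $\bigotimes_{g\in G} g^*\mathcal L$). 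The remaining issue is the field of definition: since $G$ is stable under $\Gal(\overline{\F_q}/\F_q)$ and $\overline{X}$ descends to $X/\F_q$, the formation of the quotient is compatible with the Galois action, so by Galois descent for quasi-projective schemes (effectivity of descent along $\Spec\overline{\F_q}\to\Spec\F_q$, which holds because $\overline{X}/G$ carries a canonical ample sheaf) the quotient descends to a projective $\F_q$-variety, which we still call $\overline{X}/G$. I would phrase this as: $\Gal(\overline{\F_q}/\F_q)$ acts semilinearly on $\overline{X}$ commuting with $G$, hence acts on $\overline{X}/G$, and this descent datum is effective.

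\medskip

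\textbf{Part (b): the cohomology comparison.} With $\pi\colon \overline{X}\to \overline{Y}\colonequals \overline{X}/G$ the quotient map, the claim is the standard fact that, when $\gcd(\#G,\ell)=1$, one has $H^i_{\textup{\'et}}(\overline{Y},\Q_\ell)\xrightarrow{\sim} H^i_{\textup{\'et}}(\overline{X},\Q_\ell)^G$. The argument: $\pi$ is finite, so $R^j\pi_*\Q_\ell = 0$ for $j>0$ and the Leray spectral sequence degenerates to give $H^i_{\textup{\'et}}(\overline{Y},\pi_*\Q_\ell)\cong H^i_{\textup{\'et}}(\overline{X},\Q_\ell)$. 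Next, $\pi_*\Q_\ell$ carries a $G$-action and the trace/averaging idempotent $e=\frac1{\#G}\sum_{g\in G} g$ makes sense on $\Q_\ell$-sheaves precisely because $\#G$ is invertible in $\Q_\ell$; applying $e$ splits off $(\pi_*\Q_\ell)^G$ as a direct summand, and the natural map $\Q_\ell \to (\pi_*\Q_\ell)^G$ is an isomorphism (check on geometric stalks: the stalk of $\pi_*\Q_\ell$ at a point of $\overline{Y}$ is $\bigoplus_{\text{fiber}}\Q_\ell$ with $G$ permuting the fiber transitively, so the invariants are one-dimensional). Taking cohomology and using that $e$ is exact (again $\#G$ invertible), $H^i_{\textup{\'et}}(\overline{Y},\Q_\ell) = H^i_{\textup{\'et}}(\overline{Y},(\pi_*\Q_\ell)^G) = H^i_{\textup{\'et}}(\overline{X},\Q_\ell)^G$. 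I would also remark that the isomorphism is Galois-equivariant since everything in sight is, which is what makes it useful for the zeta-function application.

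\medskip

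\textbf{Main obstacle.} The substantive content is entirely classical; the only genuine care needed is the descent of the quotient to $\F_q$ in part (a) — one must know that $\overline{X}/G$ is quasi-projective (not merely an algebraic space) so that Galois descent is effective, and one must check that the hypothesis ``$G$ stable under $\Gal(\overline{\F_q}/\F_q)$'' is exactly the compatibility needed for the Galois action to descend through the quotient. For part (b) the only hypothesis actually used is $\gcd(\#G,\ell)=1$ (to invert $\#G$ in $\Q_\ell$ and form the averaging projector); $p\neq \ell$ is of course needed even to have a good $\ell$-adic theory. I expect to cite SGA~1 and SGA~4 (or Mumford, and Milne's \emph{\'Etale Cohomology}) for these facts rather than reprove them.
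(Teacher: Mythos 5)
Your argument is correct and supplies the standard details: the paper's own proof is a single citation to Harder--Narasimhan \cite[Proposition~3.2.1]{HarderNara} together with the remark ``with some extra descent,'' and your quotient-construction plus Galois descent for~(a) and Leray plus averaging-idempotent for~(b) is precisely the classical argument behind that citation. One small correction of emphasis: $\#G$ is automatically invertible in the field $\Q_\ell$, so the hypothesis $\gcd(\#G,\ell)=1$ is not what makes the idempotent $e$ exist on $\Q_\ell$-sheaves; its actual role is to make the averaging/transfer argument work already at the level of the $\Z/\ell^n$-coefficients used to \emph{define} $\ell$-adic cohomology, a step your sketch quietly passes over when treating $\Q_\ell$ as if it were an honest constant sheaf.
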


\begin{proof}
See Harder--Narasimhan \cite[Proposition 3.2.1]{HarderNara} (with some extra descent).
\end{proof}

Our main result (slightly stronger than Theorem \ref{thm:sameweights}) is as follows.


\begin{thm}  \label{thm:commonkmf}
Let $X_{A,\psi}$ and $X_{B,\psi}$ be invertible pencils of Calabi--Yau $(n-1)$-folds in $\mathbb{P}^n$.  Suppose $A$ and $B$ have the same dual weights $(q_i)_i$.  Then for each $\psi \in \F_q$ such that $\gcd(q,(n+1)d^T)=1$ and the fibers $X_{A,\psi}$ and $X_{B,\psi}$ are nondegenerate and smooth, 
there exists a polynomial  $R_\psi(T) \in \Q[T]$ with 
\[ D(q_0,\dots,q_n) \leq \deg R_{\psi}(T) \leq \dim_\mathbb{C} H_{\textup{prim}}^{n-1}(X_{A,\psi}, \mathbb{C})^{\operatorname{SL}(F_A)}. \]
such that $R_\psi(T)$ divides  $P_{X_{A, \psi}}(T)$ and $P_{X_{B,\psi}}(T)$.
\end{thm}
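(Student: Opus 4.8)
The plan is to assemble the pieces developed in Sections 2--4 into a single argument organized around the subspace $W_\psi$ attached to the holomorphic form. First I would invoke the setup of Section~2: for each of the pencils $\diamond\in\{A,B\}$, the hypothesis $\gcd(q,(n+1)\dt)=1$ together with nondegeneracy and smoothness of $X_{\diamond,\psi}$ gives (after the convenience maneuver, as in Lemma~\ref{lem:pn1km} for the Klein--Mukai case, or directly when $F_\diamond$ is convenient for the full variable set) a $p$-adic cohomology complex $\Omega_\diamond^{\bullet}$ with $P_{X_{\diamond,\psi}}(qT)=\det(1-\Frob T\mid H^{n+2}(\Omega_\diamond^{\bullet}))$. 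Inside $H^{n+2}(\Omega_\diamond^{\bullet})$ sits the cyclic $\nabla$-stable module $W_{\diamond,\psi}$ generated by the holomorphic form $\xi_0=wx_0\cdots x_n$, whose dimension, by G\"ahrs (Theorem~\ref{thm: Gahrs}) and Proposition~\ref{prop:dimChn1}, is $D=D(q_0,\dots,q_n)$, and which is stable under Frobenius by Lemma~\ref{lem:ExistenceFrobenius}. Writing $R_{\diamond,\psi}(qT)\colonequals\det(1-\Frob T\mid W_{\diamond,\psi})$, we get $R_{\diamond,\psi}(T)\mid P_{X_{\diamond,\psi}}(T)$, with $\deg R_{\diamond,\psi}=D$.

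The second step is to show $R_{A,\psi}(T)=R_{B,\psi}(T)$. By G\"ahrs, the Picard--Fuchs equations for $X_{A,\psi}$ and $X_{B,\psi}$ coincide and, by Proposition~\ref{prop:PFirred}, are irreducible hypergeometric equations. Thus $W_{A,\psi}^*$ and $W_{B,\psi}^*$ are two differential modules over $K=k(\psi)$ realizing the same irreducible operator $L$, each equipped with a strong Frobenius structure (Lemma~\ref{lem:ExistenceFrobenius}). Dwork's uniqueness lemma \cite[Lemma, p.~89--90]{uniqueness}, applicable by irreducibility (Proposition~\ref{irredDiffModule} and Proposition~\ref{prop:PFirred}), forces the two Frobenius matrices $\Phi_A(\psi)$ and $\Phi_B(\psi)$ to agree up to a scalar $c\in\C_p^\times$. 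To pin down $c=1$ I would run the unit-root argument of Proposition~\ref{prop:uniqueoffrob} verbatim in this generality: on the nonempty ordinary locus, Adolphson--Sperber's formula \cite{AS16} for the unique unit root depends only on the common Picard--Fuchs equation (via its holomorphic solution at $\infty$), so the unit roots for the two families coincide; comparing traces gives $c\equiv 1\psmod{q}$, then $c^r\equiv 1\psmod{q^r}$ over each $\F_{q^r}$, and taking $r$ coprime to $p$ and expanding binomially yields $c=1$. Hence the characteristic polynomials of Frobenius on $W_{A,\psi}$ and $W_{B,\psi}$ are equal; call this common polynomial $R_\psi(T)$, which so far lies in $\Q_q[T]$ (or $\overline{\Q}[T]$).

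The third step is rationality: $R_\psi(T)\in\Q[T]$. Here I would use Lemma~\ref{lem:XFqproj} and the group-invariance of Section~\ref{subsec:GroupInvariance}. The subspace of middle cohomology generated by the holomorphic form lies in the $\SL(F_A)$-invariant part $H^{n-1}_{\textup{prim}}(X_{A,\psi},\C)^{\SL(F_A)}$ (Proposition~\ref{prop:dimChn1}), and the group $G=\SL(F_A)/J_{F_A}$ is a finite group of automorphisms of $X_{A,\psi}$, defined over $\F_q$ after a bounded extension, with order prime to $\ell$ for a suitable auxiliary prime $\ell\neq p$; by Lemma~\ref{lem:XFqproj}(b), $H^i_{\textup{\'et}}(\overline{X_{A,\psi}}/G,\Q_\ell)\cong H^i_{\textup{\'et}}(\overline{X_{A,\psi}},\Q_\ell)^G$, so the invariant part carries a Galois action defined over $\F_q$ and the corresponding factor of the zeta function has rational coefficients; comparing $p$-adic and $\ell$-adic realizations identifies $R_\psi(T)$ with a $\Q[T]$-factor of $P_{X_{A,\psi}}(T)$ contained in $P_{X_{A,\psi}}(T)\bmod$ the invariant piece, giving both the lower bound $\deg R_\psi\ge D$ and the upper bound $\deg R_\psi\le\dim_\C H^{n-1}_{\textup{prim}}(X_{A,\psi},\C)^{\SL(F_A)}$. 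The same factor divides $P_{X_{B,\psi}}(T)$ by Step~2.

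I expect the main obstacle to be Step~2's normalization of the constant $c$ together with the descent bookkeeping in Step~3: making the unit-root comparison rigorous requires knowing that the Adolphson--Sperber unit-root formula genuinely depends only on the dual weights (through the holomorphic solution of the shared hypergeometric equation) and that the ordinary locus is nonempty for both families simultaneously, and the passage from $\Q_q[T]$ to $\Q[T]$ needs the $\ell$-adic comparison to be compatible with the $p$-adic Frobenius eigenspace decomposition. The differential-module and Dwork-uniqueness inputs are by now standard given Propositions~\ref{prop:PFirred} and~\ref{irredDiffModule}, so the real care is in these two compatibility statements.
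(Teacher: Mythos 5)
Your Steps 1 and 2 match the paper's argument closely: the construction of the $p$-adic complex (following Adolphson--Sperber with $\psi$ treated as a variable, not actually through the Klein--Mukai convenience maneuver, though that is a minor point), the cyclic module $W_\psi$ of dimension $D$, irreducibility of the Picard--Fuchs operator, Dwork's uniqueness lemma forcing the two Frobenius structures to agree up to a constant, and the unit-root comparison to normalize that constant to $1$.

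The gap is in Step 3. You assert that ``comparing $p$-adic and $\ell$-adic realizations identifies $R_\psi(T)$ with a $\Q[T]$-factor,'' but there is no $\ell$-adic analogue of the cyclic $\nabla$-module $W_\psi$: the connection and the holomorphic form are de Rham/crystalline phenomena, so the identification you gesture at is not available. What the $\ell$-adic side (Lemma \ref{lem:XFqproj}) actually gives is that the characteristic polynomial $S_{\diamond,\psi}(T)$ of Frobenius on the $\SL(F_A)$-invariant piece $\Sigma_{\diamond,\psi}$ lies in $\Q[T]$. From this, one only knows $R_\psi \mid S_{\diamond,\psi}$ with $R_\psi \in K[T]$ for some number field $K$ (Galois over $\Q$, after enlarging); a divisor of a $\Q$-polynomial need not itself be $\Q$-rational. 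The paper therefore passes to $R'_\psi(T) \colonequals \lcm_{\sigma \in \Gal(K/\Q)} \sigma(R_\psi)(T)$: since $S_{\diamond,\psi} = \sigma(S_{\diamond,\psi})$, each conjugate $\sigma(R_\psi)$ divides $S_{\diamond,\psi}$, so $R'_\psi \mid S_{\diamond,\psi}$, and $R'_\psi$ is Galois-invariant hence in $\Q[T]$, still independent of $\diamond$, with $D = \deg R_\psi \leq \deg R'_\psi \leq \deg S_{\diamond,\psi} = \dim_\C H^{n-1}_{\textup{prim}}(X_{A,\psi},\C)^{\SL(F_A)}$. Without this LCM-of-Galois-conjugates step your degree range collapses (your $R_\psi$ has degree exactly $D$), and you have not justified that it lies in $\Q[T]$; this is precisely why the theorem allows the degree to float above $D$.
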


\begin{proof}
Let $F_{\diamond,\psi}(x)$ be invertible pencils, corresponding to matrices $\diamond=A,B$ with the same weights.  Then by Theorem~\ref{thm: Gahrs}, the Picard--Fuchs equations are of order $D(q_1,\dots,q_n)$ are the same. Suppose that the two pencils have a common smooth fiber $\psi \in \F_q$.  

We follow the construction of cohomology in Adolphson--Sperber \cite{AS08}, with a few minor modifications.  We assume their base field $\Lambda_1$ is enlarged to treat $\psi$ as a variable over $\Q_p(\zeta_p)$ with (unit) $p$-adic absolute value, so that $\Lambda_1$ has $\partial/\partial \psi$ as a nontrivial derivation.   Then the construction of the complex $\Omega^{\bullet}_\psi$ is unchanged as are the cohomology spaces $H^i(\Omega^{\bullet}_\psi)$.  Then \cite[Theorem 6.4, Corollary 6.5]{AS08}
\[ P_{\diamond,\psi_0}(qT) \colonequals \det(1-\Frob T \mid H^{n+2}(\Omega_{\diamond,\psi}^{\bullet}))|_{\psi=\widehat{\psi_0}}, \]
where $\widehat{\psi_0}$ is the Teichm\"uller lift of $\psi_0$.  

The connection
\[ \epsilon\left(\psi \frac{\partial}{\partial\psi}\right) = \psi \frac{\partial}{\partial \psi} - \dt\psi x_0 x_1 \cdots x_n \]
acts on $H^{n+2}(\Omega_{\diamond,\psi}^{\bullet})$.  By work of Katz \cite{Kat68}, the associated differential equation is the Picard--Fuchs equation.  

For each invertible pencil determined by a choice of $\diamond$, as in section \ref{sec:frobhol}, we have a subspace $W_\psi$ obtained by repeatedly applying the connection to the monomial $wx_0 x_1 \cdots x_n$ corresponding to the holomorphic form.  By Lemma \ref{lem:ExistenceFrobenius}, we obtain a strong Frobenius structure on this differential module.  By construction, the associated differential equation is the hypergeometric Picard--Fuchs equation, and this equation is \emph{independent} of $\diamond$ by Theorem \ref{thm: Gahrs}.  By Proposition~\ref{prop:PFirred}, this differential equation is irreducible. 
Under the hypothesis that $p \nmid (n+1)\dt$, there is a $p$-integral solution to this differential equation.  Then by a result of Dwork \cite[Lemma, p.\ 89--90]{uniqueness}, the respective Frobenius matrices $\Phi_{\diamond,\psi_0}$ acting on $W$ differ by $p$-adic constant.  As in the proof of Proposition \ref{prop:uniqueoffrob}, the same unique unit root at a smooth specialization implies that this constant is $1$. 

At the same time, the subspace $\Sigma_{\diamond,\psi} \colonequals H^{n+2}(\Omega_{\diamond,\psi}^{\bullet})^{\SL(F_A)}$ invariant under $\SL(F_A)$ is stable under the connection and has an action of Frobenius.  The group $\SL(F_A)$ preserves the holomorphic form, so $W_{\psi} \subseteq \Sigma_{\diamond,\psi}$.

Let
\begin{equation}
\begin{aligned}
R_{\psi_0}(qT) &\colonequals \det(1-\Frob T \mid W_{\psi}))|_{\psi=\widehat{\psi_0}} \\
S_{\diamond,\psi_0}(qT)  &\colonequals \det(1-\Frob T \mid \Sigma_{\diamond,\psi}))|_{\psi=\widehat{\psi_0}}.
\end{aligned}
\end{equation}
We have shown that 
\[ R_{\psi_0}(T) \mid S_{\diamond,\psi_0}(T) \mid P_{\diamond,\psi_0}(T) \]
with $R_{\psi_0}(T)$ independent of $\diamond$.  Since $P_{\diamond,\psi_0}(T) \in \Q[T]$, as it is a factor of the zeta function, we know immediately that $R_{\psi_0}(T) \in K[T]$ for $K$ a number field, which we may assume is Galois over $\Q$ by enlarging.

Next, we apply Lemma \ref{lem:XFqproj}: the characteristic polynomial of Frobenius via the Galois action on $H^{n-1}_{\textup{\'et}}(\overline{X}_{A,\psi_0},\Q_\ell)^{\SL(F_A)}$ is equal to $S_{\diamond,\psi_0}(qT)$.  Therefore $S_{\diamond,\psi_0}(qT) \in \Q_\ell[T]$ for all but finitely many $\ell$, and so is independent of $\ell$ and it also belongs to $\Q[T]$.  Now let 
\[ R'_{\psi_0}(T) \colonequals \lcm_{\sigma \in \Gal(K/\Q)} \sigma(R_{\psi_0})(T) \] 
be the least common multiple of the polynomials obtained by applying $\Gal(K/\Q)$ to the coefficients of $R_{\psi_0}$. Then $R'_{\psi_0}(T)$ is still independent of $\diamond$, by Galois theory $R'_{\psi_0}(T) \in \Q[T]$, and $R'_{\psi_0}(T) \mid S_{\diamond,\psi_0}(T) \mid P_{\diamond,\psi_0}(T)$ is a factor of the zeta function and
\[ \dt - \#I = \deg R_{\psi_0}(T) \leq \deg R'_{\psi_0}(T) \leq \deg S_{\diamond,\psi_0}(T) = H_{\textup{prim}}^{n-1}(X_{A,\psi}, \C)^{\SL(F_A)} \]
as desired.
\end{proof}

\begin{cor} \label{cor:dworksolved}
With hypotheses as in Theorem \textup{\ref{thm:commonkmf}}, suppose that the common dual weights are $(q_0,\ldots,q_n)=(1,\ldots,1)$.  Then $\deg R_{\psi}(T) = n$.
\end{cor}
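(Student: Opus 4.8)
The bound furnished by Theorem~\ref{thm:commonkmf} reads
\[ D(q_0,\dots,q_n) \leq \deg R_\psi(T) \leq \dim_\C H^{n-1}_{\textup{prim}}(X_{A,\psi},\C)^{\SL(F_A)}, \]
so the plan is simply to check that, when $(q_0,\dots,q_n)=(1,\dots,1)$, both outer terms equal $n$. The left one is a one-line evaluation of Theorem~\ref{thm: Gahrs}: here $\dt=n+1$, $\pmb{\alpha}=\{0,\tfrac{1}{n+1},\dots,\tfrac{n}{n+1}\}$, every $\pmb{\beta}_i=\{0\}$ so that $\pmb{\beta}=\{0,\dots,0\}$, and $I=\pmb{\alpha}\cap\pmb{\beta}=\{0\}$, whence $D(1,\dots,1)=(n+1)-\#I=n$ (the Picard--Fuchs equation being the order-$n$ hypergeometric equation of Example~\ref{example: K3QuarticsPF}).

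For the right-hand term I would first pass from the fibre $X_{A,\psi}$ to the fibre $X_A=X_{A,0}$. Since $F_A$ is an invertible polynomial its only singular point is the origin, so $X_A$ is a smooth member of the pencil lying in the connected smooth locus inside $\PP^1$. The group $\SL(F_A)$ acts on the total space of the pencil over that locus, fibrewise and trivially on the base, so its action on the local system $R^{n-1}\pi_*\C$ commutes with the Gauss--Manin connection; the primitive $\SL(F_A)$-invariants therefore form a local subsystem of locally constant rank, and
\[ \dim_\C H^{n-1}_{\textup{prim}}(X_{A,\psi},\C)^{\SL(F_A)}=\dim_\C H^{n-1}_{\textup{prim}}(X_A,\C)^{\SL(F_A)}. \]
Then I would invoke the Griffiths--Steenbrink description recalled before Example~\ref{exm:Fermat} (available because $J_{F_A}\subseteq\SL(F_A)$) to rewrite this as $\dim_\C(\mathscr{Q}_{F_A})^{\SL(F_A)}$, the dimension of the $\SL(F_A)$-invariant part of the Milnor ring.

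The remaining task is to show $\dim_\C(\mathscr{Q}_{F_A})^{\SL(F_A)}=n$ by determining its monomial basis. As $F_A$ is homogeneous of degree $n+1$, we have $J_{F_A}=\langle\zeta_{n+1}\,\mathrm{id}\rangle$, so the invariant part is graded in degrees divisible by $n+1$; since the socle of $\mathscr{Q}_{F_A}$ sits in degree $(n+1)(n-1)$, this part lies in $\bigoplus_{a=0}^{n-1}\mathscr{Q}_{F_A}^{a(n+1)}$, and each $(x_0\cdots x_n)^a$ with $0\le a\le n-1$ is a nonzero $\SL(F_A)$-invariant there (nonzero because $(x_0\cdots x_n)^{n-1}$ generates the socle, which is immediate for Fermats and a short computation for loops, and $(x_0\cdots x_n)^a$ divides it). This recovers the inequality $\ge n$; the substance is the opposite one, and for it I need to see that $(x_0\cdots x_n)^a$ is the \emph{only} invariant standard monomial in degree $a(n+1)$. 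A standard monomial $x^c$ is $\SL(F_A)$-invariant exactly when the $\Aut(F_A)$-character it carries dies on $\SL(F_A)$; because $\Aut(F_A)/\SL(F_A)$ is cyclic, generated by the determinant character carried by $x_0\cdots x_n$, this happens iff $x^c$ lies in the same $\Aut(F_A)$-eigenspace of $\mathscr{Q}_{F_A}$ as some power of $x_0\cdots x_n$. On the other hand, the hypothesis that all dual weights are $1$ forces every column sum of $A$ to equal $n+1$, which excludes chain blocks; hence $F_A$ is a direct sum of Fermat and loop blocks, $\mathscr{Q}_{F_A}$ is the tensor product of the atomic Milnor rings, $\Aut(F_A)$ the product of the atomic automorphism groups, and one checks blockwise that the standard monomial basis of $\mathscr{Q}_{F_A}$ occupies pairwise distinct $\Aut(F_A)$-eigenspaces --- essentially Example~\ref{exm:Fermat} for a Fermat block and a direct (if slightly involved) computation for a loop block. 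Combining the last two observations pins the invariant standard monomials down to the $n$ powers $(x_0\cdots x_n)^a$, $0\le a\le n-1$, so $\dim_\C(\mathscr{Q}_{F_A})^{\SL(F_A)}=n$ and therefore $n=D(1,\dots,1)\le\deg R_\psi(T)\le n$. I expect the main obstacle to be precisely the blockwise multiplicity-one statement for loops; the rest is bookkeeping, and for pencils built only from Fermats, Example~\ref{exm:Fermat} already settles it.
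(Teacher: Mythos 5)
Your evaluation of the lower bound is fine: $D(1,\dots,1)=n$ follows from Theorem~\ref{thm: Gahrs} exactly as you say. The problem is the upper bound. You try to show $\dim_\C H^{n-1}_{\textup{prim}}(X_{A,\psi},\C)^{\SL(F_A)}=n$ for \emph{every} $A$ with dual weights $(1,\dots,1)$, resting on the claim that the standard monomial basis of each atomic Milnor ring lies in pairwise distinct $\Aut$-eigenspaces. That claim is false for loop blocks, and the intermediate equality you are aiming for is itself false.

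Take the loop block $\Lsf_2$ in two variables, $x^3y+y^3x$, with $\Aut\cong\Z/8\Z$ of order $\lvert\det A\rvert = 8$, while the Milnor ring has dimension $\mu = 3^2 = 9 > 8$; so there are not enough characters to go around, and indeed $x^2$ and $y^2$ carry the same character. More generally a loop of even length $m$ and exponent $n$ has $\lvert\Aut\rvert = n^m-1 < n^m = \mu$, so distinctness cannot hold. This is not a repairable bookkeeping issue: for $A = \Lsf_2\Lsf_2$ (one of the five quartic K3 pencils in Table~\ref{table:5families}) one computes $\dim(\mathscr{Q}_{F_A})^{\SL(F_A)} = 7$, not $3$ --- the character of $(x_0x_1x_2x_3)^3$ (which vanishes in $\mathscr{Q}_{F_A}$) is shared by the four nonzero invariants $x_0^2x_2^2,\ x_0^2x_3^2,\ x_1^2x_2^2,\ x_1^2x_3^2$. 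This is consistent with Proposition~\ref{prop:dimChn1}, which is only an inequality, and with the remark there that equality holds only ``in certain cases.'' So your plan of pinching both outer terms in the bound from Theorem~\ref{thm:commonkmf} to $n$ cannot work in general.

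The paper's proof sidesteps this entirely. The factor $R_\psi(T)$ is attached to the differential module $W_\psi$ of the Picard--Fuchs equation and its Frobenius structure, which depend only on the dual weights (this is the whole content of section~4.3 and the uniqueness-of-Frobenius argument); it is therefore the \emph{same} polynomial for any invertible pencil with dual weights $(1,\dots,1)$, in particular for the Fermat pencil $\Fsf_{n+1}$. Applying Theorem~\ref{thm:commonkmf} with the Fermat pencil in the role of one of the two families gives the upper bound $\deg R_\psi \le \dim(\mathscr{Q}_{\text{Fermat}})^{\SL} = n$, which is exactly Example~\ref{exm:Fermat}. Together with $D(1,\dots,1)=n$ this pins $\deg R_\psi = n$, without any claim about the invariant dimension for other $A$. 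You should replace the block-by-block Milnor ring argument with this comparison to $\Fsf_{n+1}$.
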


\begin{proof}
First note that $D(1, \ldots, 1) = n$. By Example~\ref{exm:Fermat}, we know that for the Dwork pencil, we have the equality $D(q_0, \ldots, q_n) = \dim H_{\textup{prim}}^{n-1}(X_{A})^{\SL(F_A)}$. By applying Theorem~\ref{thm:commonkmf} to first obtain the common factor $R_{\psi}(T)$ and then applying Theorem \ref{thm:commonkmf}, we then have that $R_{\psi}(T) \in \Q[T]$ and is of degree  $D(1, \ldots, 1) = n$.
\end{proof}

In particular, by a straightforward calculation, if the invertible pencil consists of only Fermats and loops (no chains), then the dual weights are $(1,\ldots,1)$ and Corollary \ref{cor:dworksolved} applies.

\begin{rmk} \label{rmk:hypergeom}
It is also possible to argue for a descent to $\Q[T]$ of a common factor of degree $\dt-\#I$ purely in terms of hypergeometric motives---without involving the group action---as follows.  First, we need to ensure that the trace of Frobenius on the subspace of $p$-adic cohomology cut out by the hypergeometric Picard--Fuchs equation is given by an appropriately normalized finite field hypergeometric sum: this is implicit in work of Katz \cite[\S 8.2]{Katz:ESDE} and should be implied by rigidity \cite[\S 8.10]{Katz:ESDE}, but we could not find a theorem that would allow us to conclude this purely in terms of the differential equation.

In such a situation, by an elementary observation (found in Beukers--Cohen--Mellit \cite[p.\ 3]{BCM}), these hypergeometric sums are defined over $\Q$ if and only if the polynomials 
\[ g_{\pmb{\alpha}} \colonequals  \prod_{\alpha_i \in  \pmb{\alpha}\smallsetminus I } (x- e^{2\pi\sqrt{-1} \alpha_i}),\qquad g_{\pmb{\beta}} \colonequals  \prod_{\beta_{ij} \in \pmb{\beta} \smallsetminus \pmb{I}} (x- e^{2\pi\sqrt{-1} \beta_{ij}}) \]
belong to $\Z[T]$.  This statement can be shown directly.  

We show this invariance first for the polynomial $g_{\pmb{\alpha}}$.  Let $r_i = \gcd(q_i, \dt)$ for $i=0,\dots,n$. Consider the set 
\[ K=\{k : \text{$k>1$ and $k \mid \dt$ and $k \nmid r_i$ for all $i=0,\dots,n$}\}. \]
Then $\pmb{\alpha} \smallsetminus I = \{ j/k  : k \in K, \gcd(j, k) = 1\}$ so 
\begin{equation} 
g_{\pmb{\alpha}} = \prod_{k \in K} \Phi_{k}(x) 
\end{equation}
where $\Phi_k(x) \in \Z[T]$ is the $k$th cyclotomic polynomial, as desired.

A similar argument works for $g_{\pmb{\beta}}$.  Let $r_{ij} = \gcd(r_i, r_j)$ for $i, j = 0, \ldots, n$.  For each $i=0,\dots,n$, let
$$
K_i = \{ k_i : \text{$k_i \mid q_i$ and $k_i \nmid r_i$}\} \pmb{\cup} \{k_{ij} :  \text{$k_{ij} \mid r_i$ and $k_{ij} \mid r_{ij}$ for some $j<i$}\}.
$$
Then 
\[ \pmb{\beta}\smallsetminus I = \bigcup_{i=1}^n \{0\} \pmb{\cup} \bigcup_{i=0}^n \left\{ j/k_i : k_i \in K_i\text{ and }\gcd(j, k_i) = 1\right\}. \]
Hence 
\begin{equation} 
g_{\pmb{\beta}} = (x-1)^{n}\prod_{i = 0}^n \prod_{k \in K_i} \Phi_{k}(x) \in \Z[T]. 
\end{equation}
\end{rmk}

\begin{rmk}
There is yet a third way to observe a common factor purely in terms of group invariance using a common cover by a Fermat pencil (of larger degree): see recent work of Kloosterman \cite{Kloosterman:preprint}.
\end{rmk}

\subsection{Unit roots and point counts}

If $X$ is a smooth Calabi--Yau variety, the polynomial $P_X(T)$ appearing in the zeta function of $X$ has at most one root that is a $p$-adic unit.  This root is called the \defi{unit root}.  We have already used the unit root implicitly to compare zeta functions.  We may also use the unit root directly to extract arithmetic information about an invertible pencil from $A^T$.  This yields a simple arithmetic relationship between different invertible pencils with the same dual weights.

\begin{prop}\label{prop:unitroot}
Let $F_A(x)$ and $F_B(x)$ be invertible polynomials in $n+1$ variables satisfying the Calabi--Yau condition.  Suppose $A^T$ and $B^T$ have the same weights.  Then for all $\psi \in \mathbb{F}_q$ and in all characteristics including when $p \mid \dt$, either the unit root of $X_{A,\psi}$ is the same as the unit root of $X_{B,\psi}$, or neither variety has a nontrivial unit root.  Thus, the supersingular locus is the same for both pencils.
\end{prop}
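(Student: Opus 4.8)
The plan is to sidestep the Dwork-cohomology machinery used in Proposition~\ref{prop:uniqueoffrob} and Theorem~\ref{thm:commonkmf}---which forces divisibility hypotheses on $p$---and instead read the unit root off from point counts modulo powers of $q$, exploiting the fact that Corollary~\ref{cor:corsmoth} holds in \emph{every} characteristic and for \emph{all} $\psi$. Throughout I restrict to $\psi$ for which $X_{A,\psi}$ and $X_{B,\psi}$ are both smooth (the case in which ``unit root'' is defined by the recipe of Section~\ref{sec:unitroot}, and the only case relevant to the supersingular locus); write $q=p^a$. For a smooth fiber $X$, let $\gamma_0,\dots,\gamma_N$ be the reciprocal roots of $P_X(T)$, ordered by increasing $p$-adic valuation. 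Since $h^{0,n-1}=1$, the Hodge polygon of primitive $H^{n-1}$ opens with a slope-$0$ segment of length $1$ and all its remaining slopes are $\ge 1$; hence Proposition~\ref{prop:Newtonoverhodge} gives $\ord_q\gamma_0\ge 0$---with equality exactly when $X$ is ordinary---and $\ord_q\gamma_0+\ord_q\gamma_1\ge 1$. Therefore, when $X$ is ordinary, $\ord_q\gamma_j\ge 1$ for every $j\ge 1$ and $u\colonequals\gamma_0$ is the unit root.

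First I would observe that these $p$-adic estimates pin the unit root down modulo $q^r$. From the Weil shape of the zeta function, $\#X(\F_{q^r})=\sum_{i=0}^{n-1}q^{ir}+(-1)^{n+1}\sum_j\gamma_j^{\,r}$ for all $r\ge1$; when $X$ is ordinary, $\ord_p(q^{ir})\ge ar$ for $i\ge1$ and $\ord_p(\gamma_j^{\,r})\ge ar$ for $j\ge1$, so
\[ \#X(\F_{q^r})\equiv 1+(-1)^{n+1}u^{\,r}\pmod{q^r}\qquad(r\ge 1), \]
while when $X$ is supersingular every $\gamma_j$ has positive valuation, so $\#X(\F_{q^r})\equiv 1\pmod p$ once $r$ is large. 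Next I would apply Corollary~\ref{cor:corsmoth} over $\F_{q^r}$ (legitimate since $\psi\in\F_q\subseteq\F_{q^r}$ and the dual weights do not depend on the base field), obtaining $\#X_{A,\psi}(\F_{q^r})\equiv\#X_{B,\psi}(\F_{q^r})\pmod{q^r}$ for all $r$. If one fiber were ordinary and the other supersingular, comparing the two descriptions modulo $p$ for large $r$ would force $\bar u^{\,r}\equiv 0\pmod p$ for a unit $\bar u\in\overline{\F}_p^{\times}$, which is impossible; this rules out the mixed case and already shows that the supersingular locus is common to the two pencils. If both fibers are supersingular there is nothing left to prove, so assume both ordinary with unit roots $u$ and $v$; the two descriptions then give $\ord_p\bigl(u^{\,r}-v^{\,r}\bigr)\ge ar$ for all $r\ge1$, a relation between two $p$-adic units (both of valuation $0$).

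It remains to bootstrap this family of congruences to $u=v$. Suppose $u\ne v$ and put $m\colonequals\ord_p(u-v)\in[a,\infty)$. Writing $v=u+e$ with $\ord_p e=m$ and expanding, each $u^{r-1-i}v^{i}\equiv u^{r-1}$ modulo elements of valuation $\ge m$, so from $u^{r}-v^{r}=(u-v)\sum_{i=0}^{r-1}u^{r-1-i}v^{i}$ one gets $\ord_p(u^{r}-v^{r})=m$ whenever $p\nmid r$. Taking $r=p^{k}-1$ with $k\to\infty$ (which is coprime to $p$ and unbounded) contradicts $\ord_p(u^{r}-v^{r})\ge ar$. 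Hence $u=v$, completing the argument.

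The points needing care are the uniform vanishing of the ``error terms'' $\sum_{i\ge1}q^{ir}$ and $\sum_{j\ge1}\gamma_j^{\,r}$ modulo $q^r$---this is exactly where $h^{0,n-1}=1$ and the slope-$\ge1$ tail of the Hodge polygon enter---together with the $p$-adic interpolation of the last paragraph; the mixed-case exclusion, though short, is also where one must watch how small values of $r$ behave when $\ord_q\gamma_j<1$ on a supersingular fiber. The conceptual reason the argument survives the case $p\mid\dt$---unlike the Dwork-theoretic proof of Proposition~\ref{prop:uniqueoffrob}, which needs $p\nmid(n+1)\dt$ to supply a $p$-integral solution of the Picard--Fuchs equation---is that every input (the form of the zeta function, Newton over Hodge, and Corollary~\ref{cor:corsmoth}) is insensitive to the characteristic; when $p\nmid\dt$ one could alternatively invoke the explicit unit-root formula of Adolphson--Sperber and Miyatani, whose dependence runs only through the dual weights.
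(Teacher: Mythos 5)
Your proposal has a circularity problem at its core. The sole input you use to compare the two pencils is Corollary~\ref{cor:corsmoth}, the congruence $\#X_{A,\psi}(\F_{q^r})\equiv\#X_{B,\psi}(\F_{q^r})\pmod{q^r}$. But in the paper this congruence is \emph{derived from} Proposition~\ref{prop:unitroot}: the unlabeled corollary at the end of Section~4.5 states exactly Corollary~\ref{cor:corsmoth}, and its two-sentence proof reads ``The formula is true vacuously when the fiber is supersingular (there is no unit root). Otherwise, the unit root controls the point count modulo $q$''---i.e., it appeals to the very statement you are trying to prove. Your $p$-adic bootstrap (which is itself correct: the Newton-over-Hodge estimate pins the ordinary point count mod $q^r$ to $1+(-1)^{n+1}u^r$, the supersingular count to $1$ mod $p$ for large $r$, the mixed case is excluded, and the congruences $\ord_p(u^r-v^r)\ge ar$ for $p\nmid r$ force $u=v$) would be a legitimate and elegant route \emph{if} you had an independent proof of the point-count congruence over all extensions. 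You do not supply one, and the paper does not contain one; it proves the proposition first, via the Adolphson--Sperber $A$-hypergeometric unit-root formula whose dependence on $A$ passes only through the dual weights, and then deduces the point-count congruence. Your closing sentence ("when $p\nmid\dt$ one could alternatively invoke the explicit unit-root formula of Adolphson--Sperber and Miyatani") is essentially the paper's actual proof, and it is in fact available even when $p\mid\dt$ after the renormalization $\dt\psi\mapsto\psi$ noted at the start of the paper's argument.

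A secondary issue: you explicitly restrict to smooth fibers, but Proposition~\ref{prop:unitroot} is asserted for all $\psi\in\F_q$. The paper's remark immediately before the proof explains that for non-smooth, non-supersingular fibers the unit root is still defined and computed by the same Adolphson--Sperber formula, which is why the proposition can be stated without a smoothness hypothesis; the Newton/Hodge and zeta-function-shape inputs you rely on do not straightforwardly extend to singular fibers, so even with the circularity repaired your argument would cover a strictly smaller range of $\psi$ than the statement claims.
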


\begin{rmk}
In the case of non-smooth, non-supersingular fibers, Adolphson--Sperber \cite{AS16} describe what is meant here by the unit root and show that then the unit root is given by the same formula as in the smooth case.  Dwork noted the possibility of a meaningful unit root formula for varieties that are not smooth \cite{Dwo62}. 
\end{rmk}

\begin{proof} 
In the case where $p$ divides $\dt$ we replace $\dt\psi$ in the given families by $\psi$ in order to obtain a nontrivial pencil.  Adolphson--Sperber \cite{AS16} provide a formula for the unit root using $A$-hypergeometric functions. The lattice of relations used to compute the $A$-hypergeometric functions is determined by the dual weights, and the character vector is the same in both families. Thus, the unit root formula is the same in both cases.  More precisely, in the case of smooth fibers, the middle dimensional factor has a unique unit root which occurs in the common factor $R_\psi(T)$ described above. It is given by a $p$-adic analytic formula in terms of the series defined above. 
The Hasse invariant is determined by the reduction of the $A$-hypergeometric series solution mod $p$. This proves the identity of the supersingular locus in cases where the weights agree.  
\end{proof}

\begin{rmk}
In the case that $\psi \in \mathbb{F}_q^\times$ yields a smooth member of the pencil $X_{A,\psi}$, the result of Proposition~\ref{prop:unitroot} can also be obtained from Miyatani \cite[Theorem 2.9]{Miyatani}, where the unit root is nontrivial precisely when a formal power series defined using the hypergeometric parameters appearing in Equation~\ref{equ:genhypergeometric} is nonzero.  Miyatani also gives a formula for the unit root when it exists and $X_{A,\psi}$ is smooth, in terms of the same hypergeometric power series.  As we have already observed, the hypergeometric parameters depend only on the weights of $A^T$ or $B^T$.
\end{rmk}

Proposition~\ref{prop:unitroot} implies a relationship between point counts for alternate mirrors, reminiscent of Wan's strong arithmetic mirror symmetry \cite{fw, wan}.

\begin{cor}
Let $F_A(x)$ and $F_B(x)$ be invertible polynomials in $n+1$ variables satisfying the Calabi--Yau condition.  Suppose $A^T$ and $B^T$ have the same weights.  Then for any fixed $\psi \in \mathbb{F}_q$ and in all characteristics (including $p \mid \dt$) the $\mathbb{F}_q$-rational point counts for fibers $X_{A,\psi}$ and $X_{B,\psi}$ are congruent as follows: 
\[\#X_{A,\psi}(\F_q) \equiv \#X_{B,\psi}(\F_q) \pmod{q}.\]
\end{cor}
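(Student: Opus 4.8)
The plan is to show that $\#X_{A,\psi}(\F_q)$ is determined modulo $q$ by the unit root of the pencil, and then to invoke Proposition~\ref{prop:unitroot}. First I would handle the essential case, in which $p \nmid (n+1)\dt$ and both $X_{A,\psi}$ and $X_{B,\psi}$ are smooth. Then the zeta function has the shape \eqref{eqn:PXT}, and the Lefschetz trace formula gives
\[ \#X_{\diamond,\psi}(\F_q) = \sum_{i=0}^{n-1} q^i \;+\; (-1)^{n+1}\sum_j \gamma_{\diamond,j} \qquad (\diamond \in \{A,B\}), \]
where the $\gamma_{\diamond,j}$ are the reciprocal roots of $P_{X_{\diamond,\psi}}(T)$, that is, the eigenvalues of $q^{-1}\Frob$ on $H^{n+2}(\Omega^{\bullet}_{\diamond,\psi})$ by \eqref{eqn:PqT}. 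Modulo $q$ the geometric-series term contributes only $1$, so the whole matter reduces to comparing the traces of $q^{-1}\Frob$ on $H^{n+2}(\Omega^{\bullet}_{A,\psi})$ and $H^{n+2}(\Omega^{\bullet}_{B,\psi})$ modulo $q$.

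The key input is the $p$-adic estimate \eqref{eqn:amunu}, which strengthens Lemma~\ref{lem:qinFrob}: in the monomial basis $\{(\gamma w)^{|\nu|/d} x^{\nu}\}$ of $H^{n+2}(\Omega^{\bullet}_{\diamond,\psi})$, every row of the matrix of $q^{-1}\Frob$ labelled by $\mu$ with $|\mu|/d \geq 2$ is $\equiv 0 \pmod{q}$, while the Calabi--Yau condition forces $\omega_0 = \gamma w x_0\cdots x_n$ to be the unique basis monomial with $|\mu|/d = 1$. Hence, writing $\nu = (1,\dots,1)$ and $\calO$ for the ring of integers of the relevant finite extension of $\Q_p$,
\[ \Tr\bigl(q^{-1}\Frob \mid H^{n+2}(\Omega^{\bullet}_{\diamond,\psi})\bigr) \;\equiv\; q^{-1}a_{\nu\nu} \pmod{q\,\calO}, \]
the entry of section~\ref{sec:unitroot} that carries the unit root (cf.\ \eqref{eqn:anunu}). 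By Proposition~\ref{prop:unitroot} and its proof, $q^{-1}a_{\nu\nu}$ is given by an Adolphson--Sperber $A$-hypergeometric formula \cite{AS16} depending only on the dual weights and on $\psi$, hence takes the same value for $A$ and for $B$. Therefore the two traces are congruent modulo $q\,\calO$; since both lie in $\Z$ and $q\calO \cap \Z = q\Z$, the congruence descends, giving $\#X_{A,\psi}(\F_q) \equiv \#X_{B,\psi}(\F_q) \pmod{q}$ in this case.

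It then remains to remove the hypotheses, allowing singular or supersingular fibres and the characteristics with $p \mid \dt$ (where $\dt\psi$ is replaced by $\psi$ so that the pencil stays nontrivial). Using the affine cone $Y_{\diamond,\psi}$, the elementary identity $\#Y_{\diamond,\psi}(\F_q) = 1 + (q-1)\#X_{\diamond,\psi}(\F_q)$ together with \eqref{eqn:numofzeros} reduces the congruence to $\#Y_{A,\psi}(\F_q) \equiv \#Y_{B,\psi}(\F_q) \pmod{q}$, and hence once more to a trace of $q^{-1}\Frob$ on top Dwork cohomology; neither the estimate \eqref{eqn:amunu} nor the $A$-hypergeometric description of $q^{-1}a_{\nu\nu}$ uses smoothness, and for singular non-supersingular fibres one appeals to the Adolphson--Sperber treatment of the unit root recalled in the Remark after Proposition~\ref{prop:unitroot}. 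I expect this last point to be the main obstacle: checking carefully, in the possibly-singular and possibly-bad-characteristic setting, that $\#X_{\diamond,\psi}(\F_q)$ modulo $q$ is still computed by the same truncated Frobenius matrix, so that the dependence of the unit root on the dual weights alone transfers intact. The rest is routine bookkeeping with \eqref{eqn:numofzeros}--\eqref{eqn:PqT} and \eqref{eqn:amunu}.
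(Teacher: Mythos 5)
Your proposal takes the same fundamental route as the paper: reduce the point count modulo $q$ to a Frobenius trace, observe that only the unit-root contribution survives the reduction, and then invoke Proposition~\ref{prop:unitroot} to say that this contribution depends only on the dual weights. The paper's proof is a two-sentence version of exactly this: it says the claim is vacuous when there is no unit root, and otherwise the unit root controls the point count modulo $q$. Your smooth-fiber, $p \nmid (n+1)\dt$ argument correctly fleshes this out via \eqref{eqn:PXT}, \eqref{eqn:PqT}, and the $p$-adic estimate \eqref{eqn:amunu}, and the descent to $\Z$ using $q\calO \cap \Z = q\Z$ is fine since $(-1)^{n+1}\Tr(q^{-1}\Frob)$ is a rational integer by the trace formula. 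One small imprecision: Proposition~\ref{prop:unitroot} speaks of the unit root, not the diagonal entry $q^{-1}a_{\nu\nu}$; you need the bridge $u \equiv q^{-1}a_{\nu\nu} \pmod{q\calO}$ (which follows from \eqref{eqn:amunu} together with the Newton-over-Hodge estimate) to port that proposition over to the quantity you computed.

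The place you flag as the main obstacle is indeed where the proposal is shakiest. The assertion that the estimate \eqref{eqn:amunu} ``does not use smoothness'' is not right as stated: that estimate, and the whole trace formula \eqref{eqn:KMtrace}--\eqref{eqn:PqT} it sits inside, are established under the nondegeneracy and convenience hypotheses of Theorem~\ref{thm:AS}. For a degenerate fiber the lower Adolphson--Sperber cohomology groups need not vanish and $P(qT)$ need not equal the single determinant in \eqref{eqn:PqT}, so pushing the $q^{-1}a_{\nu\nu}$ argument through unchanged requires more justification than you give. The cleaner way to close this case, and what the paper actually does, is to appeal directly to the $A$-hypergeometric unit-root formula of Adolphson--Sperber \cite{AS16} (cited in Proposition~\ref{prop:unitroot} and the remark after it), which is stated for not-necessarily-smooth fibers and in all characteristics, rather than trying to rebuild the $p$-adic estimates on top of a cohomology theory that may not be available there. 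With that substitution your argument matches the paper's.
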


\begin{proof}
The formula is true vacuously when the fiber is supersingular (there is no unit root). Otherwise, the unit root controls the point count modulo $q$.  \end{proof}
 
The congruence result given here is weaker of course for smooth fibers than the result given earlier on common factors, Theorem~\ref{thm:commonkmf} above. It is possible that the common factor result for the piece of middle dimensional cohomology invariant under the respective group actions does extend meaningfully to fibers that are not smooth as well. Computations in \cite{kadir, kadir2, CORV2} show that a factor of the zeta function associated to the holomorphic form can be identified for singular fibers of the Dwork pencils of quartics and quintics, as well as for a certain family of octic Calabi--Yau threefolds in a weighted projective space.  We expect there will be a common factor (for families with the same dual weights) for singular fibers in the case of K3 surfaces, since the unit root in this case should govern the relevant factor (using the functional equation and the fact that the determinant of Frobenius is constant).  

\section{Quartic K3 Surfaces}

We now specialize to the case of $n=3$, i.e., K3 surfaces realized as a smooth quartic hypersurface in $\mathbb{P}^3$. 

\subsection{Pencils of K3 surfaces}

The invertible pencils in $\PP^3$ whose Berglund--H\"ubsch--Krawitz mirrors are hypersurfaces in finite quotients of $\PP^3$ are listed in the following table.  We list the group of \defi{symplectic} symmetries $SL(F_A)/J_{F_A}$, which act nontrivially on each projective hypersurface and fix its holomorphic form, in the third column.
\begin{equation} \label{table:5families}
\begin{tabular}{c|c|c}
 Family & Equation for $X_{A,\psi}$	&  Symmetries \\
\hline	\hline
\rule{0pt}{2.5ex}   $\Fsf_4$ & $x_0^4+x_1^4 + x_2^4 + x_3^4 - 4\psi x_0x_1x_2x_3$ 	& $(\Z/4\Z)^2$	 \\
 $\Fsf_2\Lsf_2$ & $x_0^4 + x_1^4 + x_2^3x_3 + x_3^3x_2 - 4\psi x_0x_1x_2x_3$	& $\Z/8\Z$ \\
 $\Fsf_1\Lsf_3$ & $x_0^4 + x_1^3x_2 + x_2^3x_3 + x_3^3x_1 - 4\psi x_0x_1x_2x_3$ & $\Z / 7\Z$  \\
 $\Lsf_2\Lsf_2$ & $x_0^3x_1 + x_1^3x_0 + x_2^3x_3 + x_3^3x_2 - 4\psi x_0x_1x_2x_3$ & $\Z/4\Z \times \Z/2\Z$ \\
 $\Lsf_4$ & $x_0^3x_1 + x_1^3x_2 + x_2^3x_3 + x_3^3 x_0 - 4\psi x_0x_1x_2x_3$ & $\Z / 5\Z$  \\
\end{tabular}
\end{equation}   

Recalling Example~\ref{example: K3QuarticsPF}, we observe that each of these five pencils has the same degree three Picard--Fuchs equation for the holomorphic form, and that after a change of variables, this equation is the differential equation satisfied by the classical hypergeometric function
\begin{equation} \label{eqn:K3hypnm}
\phantom{i} {}_3F_{2}\left(\begin{array}{c}\frac{1}{4},\frac{1}{2},\frac{3}{4} \\1, 1 \end{array}; \psi^{-4}\right).
\end{equation}
  
The main result of this section is the following theorem.

\begin{thm} \label{thm:K3thm}
Let $\diamond \in \calF=\{\Fsf_4,\Fsf_2\Lsf_2,\Fsf_1\Lsf_3,\Lsf_2\Lsf_2,\Lsf_4\}$ signify one of the five \textup{K3}\/ families in Table \/\textup{\ref{table:5families}}.  Let $q=p^r$ be a prime power with $p \neq 2,5,7$ and let $\psi \in \F_q$ be such that $\psi^4 \neq 1$.  Then $X_{\diamond,\psi}$ is a smooth, nondegenerate fiber of the family $\diamond$.

Let $P_{\diamond,\psi,q}(T) \in 1+T\Z[T]$ be the nontrivial factor of $Z(X_{\diamond,\psi}/\F_q,T)$ of degree $21$.  Then the following statements hold.
\begin{enumalph}
\item We have a factorization
\[ P_{\diamond,\psi,q}(T) = Q_{\diamond,\psi,q}(T)R_{\psi,q}(T) \]
in $\Z[T]$ with $\deg Q_{\diamond,\psi,q}=18$ and $\deg R_{\psi,q}=3$.  
\item The reciprocal roots of $Q_{\diamond,\psi,q}(T)$ are of the form $q$ times a root of $1$.  
\item The polynomial $R_{\psi,q}(T)$ is independent of $\diamond \in \calF$.
\end{enumalph} 
\end{thm}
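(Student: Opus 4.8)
The plan is to dispose of the preliminary claim and of parts (a) and (c) quickly, and to concentrate on (b). Each $X_{\diamond,\psi}$ is a quartic in $\PP^3$, and I would check smoothness by the Jacobian criterion and nondegeneracy with respect to the Newton polyhedron face by face, exactly as carried out for the generalized Klein--Mukai family in Section~3 (which already handles $\Fsf_1\Lsf_3$); combining the conditions coming from the Fermat exponent $4$ and the loop determinants $3^2-1=8$ and $3^4-1=80$ gives smoothness and nondegeneracy for $p\notin\{2,5,7\}$ and $\psi^4\neq1$, and then \eqref{eqn:degP} yields $\deg P_{\diamond,\psi,q}=\tfrac34(3^3+1)=21$ with $P_{\diamond,\psi,q}\in1+T\Z[T]$. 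Because every matrix in Table~\ref{table:5families} has dual weights $(1,1,1,1)$, Theorem~\ref{thm:commonkmf} together with Corollary~\ref{cor:dworksolved} already produces a single polynomial $R_{\psi,q}(T)\in\Q[T]$, independent of $\diamond$, of degree $D(1,1,1,1)=3$ and dividing every $P_{\diamond,\psi,q}(T)$, which is (c). Setting $Q_{\diamond,\psi,q}:=P_{\diamond,\psi,q}/R_{\psi,q}$, its degree is $21-3=18$; and since $P_{\diamond,\psi,q}$ is primitive in $\Z[T]$ while $R_{\psi,q}$ may be normalized to have constant term $1$ (it is $\det(1-\Frob T\mid W_\psi)$ evaluated at a Teichm\"uller point), Gauss's lemma forces $R_{\psi,q},Q_{\diamond,\psi,q}\in\Z[T]$, which is (a).

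For (b) I would argue via $p$-adic Newton polygons. The polynomial $P_{\diamond,\psi,q}(T)$ is the reversed characteristic polynomial of $\Frob$ on $H^2_{\mathrm{prim}}(X_{\diamond,\psi})$, of Hodge vector $(1,19,1)$: by Deligne all its reciprocal roots have absolute value $q$, by Poincar\'e duality the multiset of reciprocal roots is stable under $\alpha\mapsto q^2/\alpha$, and by Proposition~\ref{prop:Newtonoverhodge} its Newton polygon lies over the Hodge polygon with slopes $(0;1^{19};2)$. The key geometric input is that every smooth fiber of each pencil has transcendental rank exactly $3$, equivalently Picard number $\geq19$ over $\overline{\F}_q$; this is part of the known structure of these alternate-mirror K3 families (cf.\ \cite{DGJ08,BvGK}) and fits with the fact that the irreducible (Proposition~\ref{prop:PFirred}) hypergeometric Picard--Fuchs equation of Theorem~\ref{thm: Gahrs} has order $3$. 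Since the classes in the N\'eron--Severi part of $H^2$ have $\Frob$-eigenvalues $q$ times roots of unity, hence slope $1$, the part of slope $\neq1$ lies in the transcendental part, of dimension $\leq3$; by the symmetry of the Newton polygon the slope-$<1$ part then has dimension $\leq1$, so --- as a simple $F$-isocrystal of slope $a/b$ in lowest terms has dimension $b$ --- it is either empty, in which case $X_{\diamond,\psi}$ is supersingular and $P_{\diamond,\psi,q}$ is pure of slope $1$, or a single slope-$0$ reciprocal root, in which case $X_{\diamond,\psi}$ is ordinary and $P_{\diamond,\psi,q}$ has slopes $(0;1^{19};2)$.

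Now $R_{\psi,q}$ is the hypergeometric factor attached to the holomorphic form --- by Example~\ref{example: K3QuarticsPF} the ${}_3F_2(\tfrac14,\tfrac12,\tfrac34;1,1;\cdot)$-motive --- and it is self-dual, since its numerator parameters $\{\tfrac14,\tfrac12,\tfrac34\}$ are stable under $x\mapsto1-x$ (equivalently, the cup product restricts nondegenerately to the cyclic $\nabla$-module $W_\psi$, the pairing of the holomorphic vector with its second $\nabla$-derivative being nonzero). In the ordinary case $R_{\psi,q}$ contains the unit root, by Lemma~\ref{lem:ExistenceFrobenius} and Section~\ref{sec:frobhol}, hence by self-duality also the slope-$2$ root, so its slopes are $(0,1,2)$; in the supersingular case they are $(1,1,1)$. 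Comparing with the previous paragraph, in both cases the $18$ reciprocal roots of $Q_{\diamond,\psi,q}=P_{\diamond,\psi,q}/R_{\psi,q}$ all have slope $1$ at one place above $p$, hence --- since $Q_{\diamond,\psi,q}\in\Q[T]$ and $\Gal(\overline{\Q}/\Q)$ acts transitively on the places above $p$ --- at every place above $p$. A reciprocal root $\alpha$ of $Q_{\diamond,\psi,q}$ is thus an algebraic integer with $\alpha/q$ a unit at every place above $p$ and integral everywhere else, so $\alpha/q$ is an algebraic integer all of whose archimedean conjugates have absolute value $1$ by Deligne; Kronecker's theorem then forces $\alpha/q$ to be a root of unity, proving (b).

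The step I expect to be the main obstacle is the geometric input that all five pencils have Picard number $\geq19$ on every smooth fiber: without it, a non-ordinary non-supersingular fiber could push a non-integral slope into $Q_{\diamond,\psi,q}$ and defeat (b). For $\Fsf_4$ this can be extracted from Kadir's explicit computation of the zeta function of the Dwork quartic pencil \cite{kadir}; in general one either invokes the known structure of these families or, equivalently, exhibits eighteen independent algebraic classes (in most cases via the symplectic automorphisms recorded in Table~\ref{table:5families} and Nikulin's theory, with an extra input for $\Lsf_4$). A secondary subtlety is the self-duality of $R_{\psi,q}$, standard for hypergeometric motives with parameters symmetric about $\tfrac12$ but needed here precisely to keep the slope-$2$ reciprocal root inside $R_{\psi,q}$ rather than in $Q_{\diamond,\psi,q}$.
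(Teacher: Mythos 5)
Your overall architecture for parts~(a) and~(c) matches the paper's exactly: deduce the common degree-$3$ factor $R_{\psi,q}\in\Q[T]$ from the general theorem (Theorem~\ref{thm:commonkmf} via Corollary~\ref{cor:dworksolved}), and get integrality of $Q_{\diamond,\psi,q}$ by Gauss's lemma; the paper checks smoothness and nondegeneracy by a direct Magma computation rather than by hand. Your proof of~(b) also shares the paper's core toolkit --- the Newton polygon over the Hodge polygon, Deligne's Riemann hypothesis, the fact that the $\zeta$'s are $\ell$-adic units for $\ell\neq p$ by the functional equation, the product formula, and Kronecker's theorem. However, you route the argument through a Picard-rank bound that the paper does not use, and that bound is exactly where your proof has a gap.

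The paper's Proposition~\ref{prop:Pdiamondq} never invokes N\'eron--Severi rank: for an ordinary fiber, the fact that the Newton polygon lies over the Hodge polygon $(0;1^{19};2)$ and that the first Newton slope is $0$ with length~$1$ already \emph{forces} every reciprocal root $\beta$ other than $u(\psi)$ to have $\ord_q(\beta)\ge 1$, from which $\zeta=\beta/q$ is an algebraic integer that is a unit at every place, hence a root of unity. No input about Picard rank is needed; indeed, in the discussion immediately following Theorem~\ref{thm:K3thm}, the Picard bound $\rho\ge 19$ is a \emph{consequence} of part~(b), obtained via the Tate conjecture. By contrast, your argument feeds the bound $\rho\ge 19$ in at the start (to conclude that the slope-$\neq 1$ part has dimension $\le 3$, and then to run the isocrystal-multiplicity argument to exclude finite height $>1$), and you candidly flag that you do not prove this bound. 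Citing \cite{DGJ08,BvGK} is not sufficient --- those references concern Picard--Fuchs equations, not N\'eron--Severi ranks --- and appealing to ``the known structure of these families'' is circular if the only source of that structure is the theorem being proved. For $\Fsf_4$ the bound is indeed independently available from Bini--Garbagnati (and the paper mentions Kloosterman has since established it for the other pencils by exhibiting divisors), but you would need to carry out such a divisor count, or else argue as the paper does directly from the Newton polygon and bypass the Picard bound entirely.

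Your secondary worry about the self-duality of $R_{\psi,q}$ is reasonable, but again, the paper's Proposition~\ref{prop:Pdiamondq} does not need it: that proposition concludes that all nineteen reciprocal roots other than $u(\psi)$ and $q^2/u(\psi)$ have the form $q\zeta$, so whichever one of those is the third reciprocal root of $R_{\psi,q}$ makes no difference to~(b). (That the unit root lies in $R_{\psi,q}$ comes from Lemma~\ref{lem:ExistenceFrobenius}, as you note; that $q^2/u(\psi)$ also lies there can be read off from the explicit factorization $R_{\psi,q}(T)=(1-qT)(1-\alpha_{\psi,q}^2T)(1-\beta_{\psi,q}^2T)$ at the end of Section~5, but it is not needed for the slope analysis.) To repair your proof: replace the Picard-rank input with the paper's direct Newton-over-Hodge argument in the ordinary case; in the case of no unit root, invoke the Tate conjecture as the paper does. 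Your remaining concern about whether a fiber of finite height $h>1$ could occur is a genuine subtlety if one takes ``supersingular'' in the standard sense rather than the paper's ``not ordinary'' convention, but resolving it by way of an unproven Picard bound does not close the gap.
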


\begin{rmk}
In future work \cite{paperB}, we study these families in more detail: we describe a further factorization of $Q_{\diamond,\psi,q}(T)$ related to the action of each group, and we identify each of these additional factors as hypergeometric.
\end{rmk}

The polynomials $P_{\diamond,\psi,q}(T)$ have degree $21$ and all of their reciprocal roots $\alpha$ satisfy $|\alpha|= q$, by the Weil conjectures.  By a direct calculation in the computer algebra system \textsc{Magma} \cite{Magma}, when $p \neq 2,5,7$ and $\psi^4 \neq 1$, the fiber $X_{\diamond,\psi}$ is smooth and nondegenerate.  Parts~(a) and~(c) of Theorem~\ref{thm:K3thm} now follow from Theorem~\ref{thm:sameweights} and the Picard--Fuchs differential equation computed in Example~\ref{example: K3QuarticsPF}.

We now prove Theorem~\ref{thm:K3thm}(b).
For all $\diamond \in \calF$, the trace formula \eqref{eqn:PqT} asserts that
\[ P_{\diamond,\psi,q}(T) = \det(1-\Frob T \mid H^4(\Omega^{\bullet}_{\diamond})). \]
We now analyze the unit root.  In section \ref{sec:unitroot}, we saw that there is at most one unit root of $P_{\diamond,\psi,q}(T)$.  If there is no unit root, then the K3 surface $X_{\diamond,\psi}$ is supersingular over $\F_q$, and Theorem~\ref{thm:K3thm}(b) follows by the Tate conjecture for K3 surfaces.  Thus, we need only analyze the case where there is a unit root.

\begin{prop}  \label{prop:Pdiamondq}
Suppose $P_{\diamond,\psi,q}(T)$ has a unit root $u(\psi)$.  Then the reciprocal zeros $\beta=\beta_{\diamond}$ of $P_{\diamond,\psi,q}(T)$ other than $u(\psi)$ and the root $ q^2/u(\psi)$ all have the form $\beta=q\zeta$ where $\zeta$ is a root of unity. \end{prop}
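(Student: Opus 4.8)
My plan is to combine the factorization of Theorem~\ref{thm:K3thm}(a), the Poincar\'e‐duality functional equation, and the algebraicity of all but three of the Frobenius eigenvalues.

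First I would reduce the statement to the factor $Q_{\diamond,\psi,q}$ and dispose of $R_{\psi,q}$. By Theorem~\ref{thm:K3thm}(a) we have $P_{\diamond,\psi,q}(T)=Q_{\diamond,\psi,q}(T)R_{\psi,q}(T)$, where $R_{\psi,q}$ is the characteristic polynomial of Frobenius on the rank‐$3$ subspace $W_\psi\subseteq H^2_{\textup{prim}}(\overline X_{\diamond,\psi},\Q_\ell)$ spanned by the holomorphic form and its $\nabla$‐translates, and $Q_{\diamond,\psi,q}$ is the characteristic polynomial on the cup‐product orthogonal complement $V_\psi:=W_\psi^\perp\cap H^2_{\textup{prim}}$. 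Here $W_\psi$ is the trivial $\SL(F_\diamond)$‐isotypic part, which is orthogonal to every nontrivial isotypic component, so the pairing restricts to a perfect Frobenius‐equivariant pairing on $W_\psi$ and $H^2_{\textup{prim}}=W_\psi\oplus V_\psi$ orthogonally, $\dim V_\psi=18$. By Lemma~\ref{lem:qinFrob} the unit root $u(\psi)$, when it exists, is a reciprocal root of $R_{\psi,q}$; and $|u(\psi)|=q$ by the Weil bounds, so $u(\psi)\neq q^2/u(\psi)$. The functional equation $\beta\leftrightarrow q^2/\beta$ on the reciprocal roots of $R_{\psi,q}$ then pairs $u(\psi)$ with $q^2/u(\psi)$ and, since $\deg R_{\psi,q}=3$ is odd, forces the third reciprocal root to be self‐paired, hence equal to $\pm q=q\zeta$. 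So it remains to prove that the $18$ reciprocal roots of $Q_{\diamond,\psi,q}$ are each $q$ times a root of unity.

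For that, the key is that $V_\psi$ is spanned by classes of divisors on $\overline X_{\diamond,\psi}$. Granting this: each such divisor is defined over a finite extension $\F_{q^N}$, so $\Gal(\overline\F_q/\F_q)$ acts on $NS(\overline X_{\diamond,\psi})\otimes\Q_\ell$ through the finite quotient $\Gal(\F_{q^N}/\F_q)$, whence geometric Frobenius acts on $V_\psi$ as $q$ times a finite‐order automorphism and all its eigenvalues are $q$ times roots of unity, as desired. To establish algebraicity of $V_\psi$ I would argue by reduction from characteristic $0$: the Picard--Fuchs equation has order $3$ (Example~\ref{example: K3QuarticsPF}) and is irreducible (Proposition~\ref{prop:PFirred}), so the transcendental lattice of a very general complex fiber has rank exactly $3$ and its N\'eron--Severi lattice $M$ has rank $19$; since symplectic automorphisms act trivially on the transcendental lattice and the hyperplane class is $\SL(F_\diamond)$‐invariant, the rank‐$18$ primitive part $M_{\textup{prim}}=M\cap h^\perp$ lies in the rank‐$18$ local system $V$ and hence spans it. Spreading out, $M_{\textup{prim}}$ is realized by relative divisor classes over $\mathcal{O}_k[1/N]$ for a suitable $N$ (whose prime divisors, for these five families, are among the excluded primes $2,5,7$); reducing modulo a prime above $p$ and using injectivity of the N\'eron--Severi specialization map, these give divisor classes on $\overline X_{\diamond,\psi}$ spanning $V_\psi\otimes\Q_\ell$.

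The hard part will be precisely this last step: transporting the rank‐$19$ lattice polarization from the generic complex fiber down to the chosen fiber $X_{\diamond,\psi}$ over $\overline\F_q$ with honest control of the bad primes, and checking that the resulting algebraic classes fall into the orthogonal complement $V_\psi$ of the holomorphic‐form subspace rather than meeting $W_\psi$. An equivalent packaging, which may be cleaner to write, is to note that once one has these $18$ algebraic classes together with $h$ and the $\pm q$‐eigenvector found in $W_\psi$, the Tate conjecture for K3 surfaces forces $\rho(\overline X_{\diamond,\psi})=20$ (it is $\le 20$ since a unit root exists), and therefore the only two reciprocal roots of $P_{\diamond,\psi,q}$ that are not $q$ times a root of unity are $u(\psi)$ and $q^2/u(\psi)$, which is exactly the assertion.
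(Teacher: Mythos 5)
Your proposal takes a genuinely different route from the paper, and it is worth comparing the two. The paper's proof is a short, self-contained global/$p$-adic argument: by Deligne's Riemann hypothesis one writes $\beta = q\zeta$ with $|\zeta|_\infty = 1$ at every archimedean place; the functional equation $\beta\beta' = q^2$, together with the fact that both $\beta$ and $\beta'$ are algebraic integers, gives $|\zeta|_\ell = 1$ for all $\ell \neq p$; since the fiber is ordinary the Newton polygon touches the Hodge polygon at slope $0$, so $\ord_q(\beta) \geq 1$ for $\beta \neq u(\psi)$, whence $\zeta$ is a $p$-adic integer; the product formula then forces $|\zeta|_p = 1$, and Kronecker's theorem (the paper says ``Dirichlet,'' but means Kronecker) concludes $\zeta$ is a root of unity. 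This handles all $18$ roots of $Q_{\diamond,\psi,q}$ and the third root of $R_{\psi,q}$ in a single stroke, and—crucially—makes no appeal to the Tate conjecture in the ordinary case (the paper reserves Tate for the supersingular fibers, where Proposition~\ref{prop:Pdiamondq} does not apply).

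Your route instead establishes the result by computing N\'eron--Severi ranks: spread out the rank-$19$ lattice from characteristic zero, specialize, invoke evenness of $\rho$ over $\overline{\F}_q$, and use the Tate conjecture together with the existence of the unit root to trap $\rho = 20$. This is a valid alternative in outline and even proves a bit more, but it is considerably heavier and has several places that need to be tightened. First, you need $R_{\psi,q}$ to obey its own functional equation, which requires $W_\psi$ to be nondegenerate under cup product; the cleanest way to see this is that $W_\psi \cap W_\psi^\perp$ is a $\nabla$-stable subspace of the irreducible $W_\psi$ (Proposition~\ref{irredDiffModule}) and is proper because $W_\psi$ contains both $H^{2,0}$ and $H^{0,2}$, which pair nontrivially—simply citing isotypic orthogonality is not enough when $W_\psi$ may be a proper subspace of $\Sigma_{\diamond,\psi}$. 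Second, the spreading-out and specialization must be carried out uniformly over the $\psi$-line with explicit control of the bad primes; your parenthetical that the bad primes are among $\{2,5,7\}$ is exactly what has to be verified for each of the five pencils, and is nontrivial. Third, the ``equivalent packaging'' via $\rho = 20$ silently uses the evenness of the geometric Picard number of a non-supersingular K3 over $\overline{\F}_q$, which is itself a consequence of the Tate conjecture. Finally, you correctly flag that pinning the specialized algebraic classes inside $V_\psi$ rather than $W_\psi$ is the hard step; the $\rho = 20$ repackaging sidesteps this, and is the version I would pursue if you want to complete this proof. In short, your argument is correct in spirit and recovers the exact Picard number as a bonus, but it relies on the full strength of the Tate conjecture for K3 surfaces (Charles, Madapusi Pera, Kim--Madapusi Pera) where the paper's argument needs only Deligne, the ordinarity hypothesis, the product formula, and Kronecker.
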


\begin{proof}
We know that $\beta$ is an algebraic integer which by Deligne's proof of the Riemann hypothesis has the form $\beta = q \zeta$ with $\zeta$ an algebraic number with complex absolute value $|\zeta|_\infty=1$.  By the functional equation $\beta\beta'=q^2$, so that for any prime $\ell \neq p$, we have that $\beta$ (and $\zeta$) are $\ell$-adic units.  Since we are considering now only ordinary fibers $\psi$, the first slope of Newton agrees with the first slope of Hodge. It then follows for every $\beta$ a reciprocal zero of $P_{\diamond}(t)$ other than the unit root $u(\psi)$, we have $\ord_q(\beta) \geq 1$. As a consequence, $\zeta$ is a $p$-adic integer.  This proves $\zeta$ is an algebraic integer.  From the product formula $|\zeta|_p=1$. We have shown that $|\zeta|_v=1$ for all places $v$ of $\Q$.  By Dirichlet's theorem, this implies $\zeta$ is a root of unity.
\end{proof}

Before concluding this section, we consider the remaining invertible quartic pencils in $\PP^3$.  We may use methods similar to the analysis of Theorem~\ref{thm:K3thm} to relate two pencils of K3 surfaces whose equations incorporate chains.
\begin{equation} \label{table:chainfamilies}
\begin{tabular}{c|c|c}
Family	& Equation for $X_{A,\psi}$	& Symmetries \\
\hline \hline
\rule{0pt}{2.5ex} 
$\Csf_2\Fsf_2$	& $ x_0^3x_1+x_1^4 + x_2^4 + x_3^4 - 12\psi x_0x_1x_2x_3$ 	&$ \Z/4\Z$  \\

$\Csf_2 \Lsf_2$	& $x_0^3x_1 + x_1^4 + x_2^3x_3 + x_3^3 x_2 - 12\psi x_0x_1x_2x_3$ &  $\Z/2\Z$	
\end{tabular}
\end{equation}

Let $\clubsuit \in \mathcal{G}=\{\Csf_2\Fsf_2, \Csf_2 \Lsf_2\}$ signify one of the two \textup{K3}\/ families in Table \/\textup{\ref{table:chainfamilies}}.  The dual weights for these families are $(4,2,3,3)$.  Let $X_{\clubsuit,\psi}$ be a smooth member of $\clubsuit$, and assume $\gcd(q,6)=1$.  Let $P_{\clubsuit,\psi}(T) \in 1+T\Z[T]$ be the nontrivial factor of $Z(X_{\clubsuit,\psi},T)$ of degree $21$ as in \eqref{eqn:PXT}.  Then by Theorem \ref{thm:commonkmf} we have a factorization
\begin{equation}\label{equ:clubs}
P_{\clubsuit,\psi}(T) = Q_{\clubsuit,\psi}(T)R_{\psi}(T) 
\end{equation} 
in $\Z[T]$ with $6 \leq \deg R_{\psi} \leq 7$ and $R_{\psi}(T)$ is independent of $\clubsuit \in \mathcal{G}$.  However, we pin this down in the next subsection, and show in fact that $\deg R_{\psi}=6$ (as expected), with $\deg Q_{\clubsuit,\psi}=15$.  The reciprocal roots of $Q_{\clubsuit,\psi}(T)$ are of the form $q$ times a root of $1$ from a similar argument as in Proposition \ref{prop:Pdiamondq}.  

Together, Theorem~\ref{thm:K3thm} and Equation~\ref{equ:clubs} give a complete description of the implications of Theorem~\ref{thm:sameweights} for invertible pencils of K3 hypersurfaces in $\PP^3$; the remaining three pencils, classified for example by Doran--Garavuso  \cite{DG}, are each described by matrices with distinct sets of dual weights.

\subsection{Discussion and applications}

By Tate's conjecture, a theorem due to work of Charles \cite{Charles}, Madapusi Pera \cite{Pera}, and Kim--Madapusi Pera \cite{KimPera}, the N\'eron--Severi rank of a K3 surface $X$ over $\F_q$ is equal to one plus the multiplicity of $q$ as a reciprocal root of $P(T)$ \cite[Corollary 2.3]{vL}, and this rank is even.  (The extra ``one'' corresponds to the hyperplane section, already factored in.)  Thus Theorem~\ref{thm:K3thm}(b) implies that each $X_{\diamond,\psi}$ has N\'eron--Severi rank over the algebraic closure $\overline{\F}_q$ at least $18+1=19$, so at least $20$ because it is even.  Similarly, each $X_{\clubsuit,\psi}$ has N\'eron--Severi rank over $\overline{\F}_q$ at least $14+1=15$, thus $16$ because it is even.  

By comparison, in characteristic $0$ we can inspect the N\'eron--Severi ranks as follows.  Theorem~\ref{thm:K3thm} implies that the subspace in cohomology cut out by the Picard--Fuchs equation is contained in the $\SL(F_A)$-invariant subspace and it contains $H^{2,0}$.  Consequently, as observed by Kloosterman \cite{Kloosterman:preprint}, this implies that the $\SL(F_A)$-invariant subspace in $H^2_{\textup{\'et}}(X_{A,\psi})$ contains the transcendental subspace: indeed, one definition of the transcendental lattice of a K3 surface is as the minimal primitive sub-$\Q$-Hodge structure containing $H^{2,0}$ \cite[Definition 3.2.5]{Huybrechts}.  

For the five pencils in Table \ref{table:5families} with dual weights $(1,1,1,1)$, we conclude that the generic N\'eron--Severi rank is at least $22-3=19$; but it cannot be $20$, because then the family would be isotrivial, so it is equal to $19$.  Similarly, for the two pencils in Table \ref{table:chainfamilies}, the generic N\'eron--Severi rank $\rho$ is at least $22-7=15$: but the divisor defined by $x_1=0,x_2^2=ix_3^2$ for either choice of $i^2=-1$ is $\SL(F_A)$-invariant, so the generic N\'eron--Severi rank $\rho$ is in fact at least $16$.  Now a specialization result due to Charles \cite{CharlesPic} shows that the rank over $\overline{\F_q}$ is always at least $\rho$ and is infinitely often equal to $\rho$ if the rank is even and infinitely often $\rho+1$ if the rank is odd.  By the first paragraph of this section, we conclude that the generic N\'eron--Severi rank of these two pencils is exactly $16$.  

The complete N\'eron--Severi lattice of rank $19$ for the case of the Dwork pencil $\Fsf_4$ is worked out via transcendental techniques by Bini--Garbagnati \cite[\S 4]{bg}.  It would be interesting to compute the full N\'eron--Severi lattices for the remaining four plus two families; Kloosterman \cite{Kloosterman:preprint} has made some recent progress on this question and in particular has also shown (by a count of divisors) that the generic N\'eron--Severi rank is $16$ for the $\Csf_2\Fsf_2$ and $\Csf_2 \Lsf_2$ pencils.

We conclude by a discussion of some applications of Theorem~\ref{thm:K3thm} in the context of mirror symmetry.  Let $Y_\psi$ be the pencil of K3 surfaces mirror to quartics in $\mathbb{P}^3$ obtained by taking the quotient of $\Fsf_4$ by $(\mathbb{Z}/4\mathbb{Z})^2$ and resolving singularities. It can be viewed as the minimal resolution of the complete intersection  \cite{NS01, dAMS03}
$$
Z(xyz(x+y+z+4\psi w) - w^4) \subseteq \PP^4.
$$
A computation described by Kadir \cite[Chapter 6]{kadir} shows that for odd primes and $\psi \in \F_q$ with $\psi^4 \neq 1$,
\begin{equation} 
Z(Y_\psi,T)=\frac{1}{(1-T)(1-qT)^{19}(1-q^2T)R_{\psi,q}(T)}. 
\end{equation}

This calculation combined with Theorem~\ref{thm:K3thm} yields the following corollary.

\begin{cor}
There exists $r_0 \geq 1$ such that for all $q=p^r$ with $r_0 \mid r$ and $p \neq 2,5,7$ and all $\psi \in \F_q$ with $\psi^4 \neq 1$, we have 
\[ Z(X_{\diamond,\psi}/\F_{q^r},T)=Z(Y_{\psi}/\F_{q^r},T). \]
\end{cor}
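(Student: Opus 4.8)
The plan is to compare the two zeta functions factor by factor, reducing the statement to a single claim about the ``non-holomorphic'' piece $Q_{\diamond,\psi,q}(T)$, whose reciprocal roots are already controlled by Theorem~\ref{thm:K3thm}(b).

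First I would write down both zeta functions over $\F_q$. Combining \eqref{eqn:PXT} with the factorization of Theorem~\ref{thm:K3thm}(a),
\[ Z(X_{\diamond,\psi}/\F_q,T)=\frac{1}{(1-T)(1-qT)(1-q^2T)\,Q_{\diamond,\psi,q}(T)\,R_{\psi,q}(T)}, \]
whereas Kadir's computation gives
\[ Z(Y_\psi/\F_q,T)=\frac{1}{(1-T)(1-qT)^{19}(1-q^2T)\,R_{\psi,q}(T)}, \]
with the \emph{same} polynomial $R_{\psi,q}(T)$: indeed $\Fsf_4\in\calF$ and $Y_\psi$ is the mirror quartic, so the factor Kadir isolates is the one attached to the holomorphic form, which is the common factor $R_{\psi,q}(T)$ of Theorem~\ref{thm:K3thm}(c) (both are cut out by the hypergeometric Picard--Fuchs equation~\eqref{eqn:K3hypnm}). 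Extending scalars from $\F_q$ to $\F_{q^r}$ raises every reciprocal zero and pole to its $r$-th power; since the trivial factors and $R_{\psi,q}(T)$ occur identically on the two sides, the desired identity $Z(X_{\diamond,\psi}/\F_{q^r},T)=Z(Y_\psi/\F_{q^r},T)$ is equivalent to
\[ Q_{\diamond,\psi,q^r}(T)=(1-q^rT)^{18}, \]
where $Q_{\diamond,\psi,q^r}$ denotes the base change of $Q_{\diamond,\psi,q}$ to $\F_{q^r}$.

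Next I would produce $r_0$. Write $Q_{\diamond,\psi,q}(T)=\prod_{j=1}^{18}\bigl(1-q\zeta_j T\bigr)$; each $\zeta_j$ is a root of unity by Theorem~\ref{thm:K3thm}(b). The monic polynomial $\widetilde Q(x):=\prod_{j=1}^{18}(x-\zeta_j)$ is obtained from $Q_{\diamond,\psi,q}(T)\in\Z[T]$ by rescaling its reciprocal roots by $q^{-1}$, so it has rational coefficients; these are also algebraic integers, hence $\widetilde Q\in\Z[x]$. By Kronecker's theorem $\widetilde Q$ is a product of cyclotomic polynomials $\Phi_m$, and $\deg\widetilde Q=18$ forces $\phi(m)\le 18$ for each such $m$. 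Set
\[ r_0:=\lcm\{\,m\in\Z_{\ge 1}:\phi(m)\le 18\,\}, \]
which is finite and manifestly independent of $\diamond$, $\psi$, and $q$. If $r_0\mid r$ then $\zeta_j^{\,r}=1$ for all $j$, so $Q_{\diamond,\psi,q^r}(T)=\prod_{j=1}^{18}\bigl(1-q^r\zeta_j^{\,r}T\bigr)=(1-q^rT)^{18}$, and the previous paragraph then gives the corollary in the stated range ($p\ne 2,5,7$ and $\psi^4\ne1$, where Theorem~\ref{thm:K3thm} and Kadir's computation both apply).

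I expect the only real subtlety to be the uniformity of $r_0$ across the family, the parameter $\psi$, and the base field: a priori the roots of unity $\zeta_j$ could have unbounded conductor, but the fixed bound $\deg Q_{\diamond,\psi,q}=18$ caps this once and for all via Kronecker's theorem, so the obstacle evaporates. One may of course take $r_0$ to be merely a common multiple of the orders that actually occur; optimizing it is deferred to \cite{paperB}.
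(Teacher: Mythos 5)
Your argument is correct and is essentially the proof the paper has in mind: the paper states the corollary with no proof beyond noting that it follows from Theorem~\ref{thm:K3thm}(b) (reciprocal roots of $Q_{\diamond,\psi,q}$ are $q$ times roots of unity) combined with Kadir's computation of $Z(Y_\psi,T)$, and defers the explicit $r_0$ to \cite{paperB}. Your Kronecker-theorem step, bounding the orders of the $\zeta_j$ uniformly by $\phi(m)\le\deg Q_{\diamond,\psi,q}=18$, is exactly the intended mechanism for producing an $r_0$ independent of $\diamond$, $\psi$, and $q$.
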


In other words, for all $\psi \in \F_q$ with $\psi^4 \neq 1$, not only do we have the \defi{strong mirror relationship} 
\[ \#X_{\diamond,\psi}(\F_{q^r}) \equiv \#Y_{\psi}(\F_{q^r}) \pmod{q^r} \]
for all $\diamond \in \calF$ and $r \geq 1$ (see Wan \cite{wan}), but in fact we have equality 
\[ \#X_{\diamond,\psi}(\F_{q^r})=\#Y_{\psi}(\F_{q^r}) \]
for all $r$ divisible by $r_0$.  Accordingly, we say that the zeta functions $Z(X_{\diamond,\psi}/\F_q,T)$  for all $\diamond \in \calF$ and $Z(Y_{\psi}/\F_q,T)$ are potentially equal, that is, equal after a finite extension.

In addition, quite concretely, Elkies--Sch\"utt \cite{ES} find an elliptic fibration on the mirror $Y_{\psi}$ that allow us to obtain more information about the factor $R_{\psi,q}(T)$. Via a Shioda--Inose structure, $Y_{\psi}$ corresponds to the abelian surface $E \times E'$ where $E,E'$ are elliptic curves with $j$-invariants $j,j'$ where
\[ jj' = (\mu+144)^3, \quad (j-1728)(j'-1728)=\mu(\mu-648)^2, \]
and $\mu=256\psi^4$.  The curves $E,E'$ are $2$-isogenous, and so are parametrized by the modular curve $X_0(2)/\langle w_2 \rangle$.  It follows that letting
\[ a_{\psi,q}=q+1-\#E(\F_q), \quad a'_{\psi,q}=q+1-\#E'(\F_q) \] 
then $a_{\psi,q}=\pm a'_{\psi,q}$. By factoring
\[ 1 - a_{\psi,q}T+qT^2=(1-\alpha_{\psi,q}T)(1-\beta_{\psi,q} T) \]
we have
\begin{equation} 
R_{\psi,q}(T)=(1-qT)(1-(a_{\psi,q}^2-2q)T + q^2T^2)=(1-qT)(1-\alpha_{\psi,q}^2T)(1-\beta_{\psi,q}^2T).
\end{equation}


\begin{thebibliography}{DKSSVW16}

\bibitem[AS89]{AS}A.~Adolphson and S.~Sperber, \emph{Exponential sums and {N}ewton polyhedra: cohomology and estimates}, {Ann.~of Math.}~(2), \textbf{130} (1989), 367--406.

\bibitem[AS08]{AS08} A.~Adolphson and S.~Sperber, \emph{On the zeta function of a projective complete intersection}, Illinois J.~Math.~\textbf{52} (2008), no.~2, 389--417.

\bibitem[AS16]{AS16}A.~Adolphson and S.~Sperber, \emph{Distinguished-root formulas for generalized Calabi--Yau hypersurfaces}, \texttt{arXiv:1602.03578}, 2016.

\bibitem[AP15]{AP15} M. Aldi, A. Peruni\v{c}i\'c. \emph{$p$-adic Berglund-H\"ubsch duality.} Adv. Theor. Math. Phys. 19 (2015), no. 5, 1115-1139. 

\bibitem[ABS14]{ABS14} M. Artebani, S. Boissi\`ere, A. Sarti. \emph{The Berglund-H\"ubsch-Chiodo-Ruan mirror symmetry for K3 surfaces,} Jour. Math. Pure. Appl. 102 (2014), pp. 758-781.

\bibitem[BH93]{BH93}  P. Berglund and T. H\"ubsch, \emph{A Generalized Construction of Mirror Manifolds}, Nuclear Physics B, vol 393, 1993.

\bibitem[Beu08]{Beu}
F. Beukers, \emph{Hypergeometric functions in one variable}, Notes, 2008, available at \verb|https://www.staff.science.uu.nl/~beuke106/springschool99.pdf|.

\bibitem[BCM15]{BCM}
F. Beukers, H. Cohen, and A. Mellit, \emph{Finite hypergeometric functions}, \verb|arXiv:1505.02900v1|, 2015.

\bibitem[BG14]{bg}
G. Bini and A. Garbagnati, \emph{Quotients of the {D}work pencil}, J. Geom. Phys. \textbf{75} (2014), 173--198.

\bibitem[BvGK12]{BvGK}
G. Bini, B. van Geemen, T. L. Kelly. \emph{Mirror quintics, discrete symmetries and Shioda maps}, J. Alg. Geom. \textbf{21} (2012), 401-412.

\bibitem[BCP97]{Magma} W.~Bosma, J.~Cannon, and C.~Playoust, \emph{The Magma algebra system.\ I.\ The user language}, J.\ Symbolic Comput.\ \textbf{24} (3--4), 1997, 235--265.

\bibitem[CD08]{candelas} P. Candelas, X. de la Ossa, \emph{The Zeta-function of a p-adic manifold, {D}work theory for Physicists}, arxiv:0705.2056v1, 2008. 

\bibitem[CDGP91]{CDGP}
P. Candelas, X. C.~de la Ossa, P. S.~Green, L. Parkes, \emph{A pair of Calabi--Yau manifolds as an exactly soluble superconformal theory}, Nuclear Physics B \textbf{359} (1991), no.~1, 21--74.

\bibitem[CDRV00]{CORV}
P. Candelas, X. de la Ossa,  F. Rodriguez Villegas, \emph{Calabi--Yau manifolds over finite fields, {I}}, arXiv:hep-th/0012233v1, 2000.  

\bibitem[CDRV01]{CORV2} P. Candelas, X. de la Ossa, F. Rodriguez-Villegas, \emph{Calabi--Yau manifolds
over finite fields {II}}, in \emph{Calabi--Yau varieties and mirror symmetry},
Toronto 2001, 121-157, hep-th/0402133.

\bibitem[Cha13]{Charles}
F. Charles, \emph{The Tate conjecture for \textup{K3} surfaces over finite fields}, Invent.~Math.~\textbf{194} (2013), no.~1, 119--145.

\bibitem[Cha14]{CharlesPic}
Fran\c{c}ois Charles, \emph{On the Picard number of \textup{K3} surfaces over number fields}, Algebra Number Theory~\textbf{8} (2014), no.~1, 1--17.

\bibitem[CR11]{CR11} A. Chiodo, Y. Ruan. \emph{LG/CY correspondence: the state space isomorphism} Adv. Math., 227, Issue 6 (2011), 2157-2188.

\bibitem[dAMS03]{dAMS03} P. L. del Angel, S. M\"uller-Stach. \emph{Picard--Fuchs equations, integrable systems and higher algebraic K-theory.}  Calabi--Yau varieties and mirror symmetry (Toronto, ON, 2001), 43-55,  Fields Inst. Commun., 38, Amer. Math. Soc., Providence, RI, 2003. 

\bibitem[Dol82]{Dol82} I. Dolgachev. \emph{Weighted projective varieties.} Group actions and vector fields (Vancouver, B.C., 1981), 34-71, Lecture Notes in Math., 956, Springer, Berlin, 1982.

\bibitem[DG11]{DG}C.~F. Doran and R.~S. Garavuso, \emph{Hori-{V}afa mirror periods, {P}icard-{F}uchs equations, and {B}erglund-{H}\"ubsch-{K}rawitz duality}, Journal of High Energy Physics (2011), issue 10, 128, 21 pp.

\bibitem[DGJ08]{DGJ08} C.~F. Doran, B. Greene, and S. Judes, \emph{Families of Quintic Calabi--Yau 3-folds with Discrete Symmetries} Comm. Math. Phys. \textbf{280} (2008) pp. 675-725.

\bibitem[DKSSVW17]{paperB}C.~F. Doran, T.~L. Kelly, A. Salerno, S. Sperber, J. Voight, and U. Whitcher, \emph{Hypergeometric properties of symmetric K3 quartic pencils}.  Preprint, 2017.

\bibitem[Dwo62]{Dwo62} B. Dwork, \emph{A deformation theory for the zeta function of a hypersurface}. 1963 Proc. Internat. Congr. Mathematicians (Stockholm, 1962), 247--259.

\bibitem[Dwo69]{padic} B. Dwork, \emph{{$p$}-adic cycles}, Inst. Hautes \'Etudes Sci. Publ. Math. \textbf{37} (1969) 27--115.

\bibitem[Dwo89]{uniqueness}
B. Dwork, \emph{On the uniqueness of Frobenius operator on differential equations}, Algebraic number theory,
Adv. Stud. Pure Math., vol.\ 17, Academic Press, Boston, MA, 1989, 89--96.

\bibitem[ES08]{ES}
N. D.~Elkies, M. Sch\"utt, \emph{\textup{K3} families of high Picard rank}, unpublished notes, 2008.

\bibitem[EG-Z16]{EGZ}
W. Ebeling, S. M.\ Gusein-Zade, \emph{Orbifold zeta functions for dual invertible polynomials}, Proc.\ Edinb.\ Math.\ Soc.\ (2) \textbf{60} (2016), no.\ 1, 99--106. 

\bibitem[FJR13]{FJR13} H. Fan, T. Jarvis, Y. Ruan. \emph{The Witten equation, mirror symmetry and quantum singularity theory}, Ann. of Math. (2) {\bf 178} (2013) no. 1, 1-106.

 \bibitem[FW06]{fw}L. Fu and D. Wan, \emph{Mirror congruence for rational points on {C}alabi-{Y}au varieties}, {Asian J. Math.} 10, 2006, 1, 1--10.

\bibitem[{G\"a}h11]{GahrsThesis}S. G{\"a}hrs, \emph{Picard-{F}uchs equations of special one-parameter families of invertible polynomials}, Ph.D. thesis, Gottfried Wilhelm Leibniz Univ. Hannover, arXiv:1109.3462.

\bibitem[{G\"a}h13]{Gahrs}S. G{\"a}hrs, \emph{Picard-{F}uchs equations of special one-parameter families of invertible polynomials} in \emph{Arithmetic and geometry of {K}3 surfaces and {C}alabi-{Y}au threefolds}, Fields Institute Communications \textbf{67}, Springer, New York, 2013, 285--310.

\bibitem[GP90]{GP}B.R. Greene and M. Plesser, \emph{Duality in {C}alabi-{Y}au moduli space}, Nuclear Physics {B} \textbf{338} (1990), no. 1, 15--37.

\bibitem[HN75]{HarderNara}
G. Harder and M. S. Narasimhan, \emph{On the cohomology groups of moduli spaces of vector bundles on curves}, Math.\ Ann.\ \textbf{212} (1975), 215--248.

\bibitem[Huy16]{Huybrechts}
D.\ Huybrechts, \emph{Lectures on K3 surfaces}, Cambridge Studies in Advanced Mathematics, vol.\ 158, Cambridge, 2016.

\bibitem[Kad04]{kadir} S.\ Kadir, \emph{The arithmetic of Calabi--Yau manifolds and mirror symmetry}, D.Phil. thesis, Univ. of Oxford, 2004. arXiv: hep-th/0409202


\bibitem[Kad06]{kadir2}S.\ Kadir, \emph{Arithmetic mirror symmetry for a two-parameter family of
              {C}alabi-{Y}au manifolds}, in \emph{Mirror symmetry. {V}}, AMS/IP Stud. Adv. Math., \textbf{38}, 35--86, Amer. Math. Soc., Providence, RI, 2006.

\bibitem[Kat68]{Kat68} N.\ Katz, \emph{On the differential equations satisfied by period matrices}, Inst.\ Hautes \'Etudes Sci.\ Publ.\ Math. \textbf{35} (1968), 223--258. 

\bibitem[Kat72]{Kat72} 
N.\ Katz, \emph{Algebraic solutions of differential equations ($p$-curvature and the Hodge filtration)}. Invent. Math. \textbf{18} (1972), 1--118. 

\bibitem[Kat90]{Katz:ESDE}
N. M.\ Katz, \emph{Exponential sums and differential equations}, Princeton University Press, Princeton, 1990.

\bibitem[Kat09]{katz:dwork}
N.~M. Katz,
\newblock \emph{Another look at the {D}work family},
\newblock { Algebra, arithmetic, and geometry: in honor of Yu. I. Manin}, Progr. Math., 270, 
\newblock Birkh\"{a}user Boston, Inc., 2009, 89--126.
   
\bibitem[KP15]{KimPera} W. Kim, K. Madapusi Pera, \emph{$2$-adic integral canonical models and the Tate conjecture in characteristic $2$}, \verb|arXiv:1512.02540v1|, 8 December 2015.

\bibitem[Kl17]{Kloosterman:preprint}
R. Kloosterman, \emph{Monomial deformations of Delsarte hypersurface}, preprint, 2017.

\bibitem[Kra09]{Kra09} M. Krawitz. \emph{FJRW rings and Landau--Ginzburg Mirror Symmetry}. arxiv: 0906.0796.   

\bibitem[KS92]{KS}
M.\ Kreuzer and H.\ Skarke, \emph{On the classification of quasihomogeneous functions}, Comm.\ Math.\ Phys.\ \textbf{150} (1992), no.\ 1, 137--147.

\bibitem[Lev99]{eightfold}S. Levy, ed., \emph{{T}he eightfold way}, Mathematical Sciences Research Institute Publications, Cambridge University Press, Cambridge, 1999.

\bibitem[MW16]{mw}C. Magyar and U. Whitcher.  \emph{Strong arithmetic mirror symmetry and toric isogenies}.  To appear in \emph{Proceedings of the AMS Special Session on Higher Genus Curves and Fibrations of Higher Genus Curves in Mathematical Physics and Arithmetic Geometry}.  arXiv:1610.01011

\bibitem[Maz72]{Mazur} B. Mazur, \emph{Frobenius and the {H}odge filtration}, Ann. of Math.~(2) \textbf{98} (1973) 58--95.

\bibitem[Miy15]{Miyatani}
K. Miyatani, \emph{Monomial deformations of certain hypersurfaces and two hypergeometric functions}.  Int. J. Number Theory \textbf{11} (2015), no.\ 8, 2405--2430.

\bibitem[Muk88]{mukai}
S. Mukai, \emph{Finite groups of automorphisms and the Mathieu group}. Inventiones Math. \textbf{94} (1988).

\bibitem[OZ02]{OZ}
K. Oguiso, and D.-Q. Zhang, \emph{The simple group of order 168 and \textup{K3} surfaces}, in \emph{Complex geometry ({G}\"ottingen, 2000)} (2002), 165-184.

\bibitem[NS01]{NS01} N. Narumiya; H. Shiga, \emph{The mirror map for a family of $K3$ surfaces induced from the simplest 3-dimensional reflexive polytope.} 
Proceedings on Moonshine and related topics (Montr\'eal, QC, 1999), 139-161, CRM Proc. Lecture Notes, 30 (2001), Amer. Math. Soc.


\bibitem[Per15]{Pera}
K. Madapusi Pera, \emph{The Tate conjecture for \textup{K3} surfaces in odd characteristic}, Invent.~Math.~\textbf{201} (2015), no.~2, 625--668.


\bibitem[Sab05]{Sabbah}
Claude Sabbah, \emph{Hypergeometric differential and $q$-difference equations}, 
\verb|http://www.cmls.polytechnique.fr/perso/sabbah/exposes/sabbah_lisbonne05.pdf|, 2005.

\bibitem[Shi86]{Shi86} T. Shioda, \emph{An explicit algorithm for computing the Picard number of certain algebraic surfaces}, Amer. J. Math. 108 (1986) 415-432.

\bibitem[SV13]{SV} 
Steven Sperber and John Voight, \emph{Computing zeta functions of nondegenerate hypersurfaces with few monomials}, LMS J.\ Comp.\ Math.\ \textbf{16} (2013), 9--44.

\bibitem[vanL07]{vL}
R. van Luijk, \emph{\textup{K3} surfaces with Picard number one and infinitely many rational points}, Algebra Number Theory \textbf{1} (2007), no.~1, 1--15.

\bibitem[Wan06]{wan}Daqing Wan, \emph{Mirror symmetry for zeta functions}, in \emph{Mirror symmetry.\ {V}}, AMS/IP Stud. Adv. Math., 38, 2006.

\bibitem[Yu08]{yu}Jeng-Daw Yu, \emph{Variation of the unit root along the Dwork family of Calabi--Yau varieties}, Math.\ Ann.\ \textbf{343} (2009), no.~1, 53--78.

\end{thebibliography}
\end{document}